\documentclass[11pt]{amsart}
\usepackage[dvips]{color}
\usepackage{amsmath}
\usepackage{amsxtra}
\usepackage{amscd}
\usepackage{amsthm}
\usepackage{amsfonts}
\usepackage{amssymb}
\usepackage{eucal}
\usepackage{epsfig}

\usepackage{young}
\usepackage[vcentermath]{youngtab}
\setlength{\hoffset}{-1in}
\setlength{\voffset}{-1in}
\setlength{\oddsidemargin}{1in}
\setlength{\evensidemargin}{1in}
\setlength{\textwidth}{6.in}
\setlength{\textheight}{8in}
\setlength{\topmargin}{1in}
\setlength{\baselineskip}{14pt}
\theoremstyle{plain}
\newtheorem{theorem}{Theorem}[section]

\newtheorem{cor}[theorem]{Corollary}
\newtheorem{prop}[theorem]{Proposition}
\newtheorem{lem}[theorem]{Lemma}

\newtheorem{defi}[theorem]{Definition}
\theoremstyle{definition}

\newtheorem{example}{Example}
\newtheorem{question}{Question}
\newtheorem{rem}[theorem]{Remark}

\newcommand{\Glie}{\mathfrak{g}}             
\newcommand{\Gaff}{\widehat{\mathfrak{g}}}   
\newcommand{\Gafft}{\widehat{\mathfrak{g}'}} 
\newcommand{\ev}{\textrm{ev}}                
\newcommand{\CR}{\textbf{R}}                 

\newcommand{\BC}{\mathbb{C}}            
\newcommand{\BZ}{\mathbb{Z}}            

\newcommand{\BV}{\mathbf{V}}             
\newcommand{\BW}{\mathbf{W}}           
\newcommand{\CW}{\mathcal{W}}       
\newcommand{\BP}{\mathbf{P}}             
\newcommand{\End}{\mathrm{End}}          
\newcommand{\Id}{\textrm{Id}}            
\newcommand{\Sm}{\mathbb{S}}             

\newcommand{\super}{\mathbb{Z}_2}        
\newcommand{\even}{\overline{0}}         
\newcommand{\odd}{\overline{1}}          

%

%
%

\allowdisplaybreaks                

\begin{document}
\begin{title}[Quantum affine superalgebras]
{Fundamental representations of quantum affine superalgebras and $R$--matrices}
\end{title}
\author{Huafeng Zhang}
\address{Departement Mathematik, ETH Z\"{u}rich, CH-8092 Z\"{u}rich, Switzerland.} 
\address{Institut f\"ur Theoretische Physik, ETH Z\"urich, CH-8093 Z\"urich, Switzerland.}
\email{huafeng.zhang@math.ethz.ch} 
\begin{abstract}
We study a certain family of finite-dimensional simple representations over quantum affine superalgebras associated to general linear Lie superalgebras, the so-called fundamental representations: the denominators of rational $R$--matrices between two fundamental representations are computed; a cyclicity (and so simplicity) condition on tensor products of fundamental representations is proved. 
\end{abstract}

\maketitle
\setcounter{tocdepth}{1}
\section{Introduction}
Fix $M,N$ two natural numbers and $q$ a non-zero complex number which is not a root of unity.
Let $\Glie := \mathfrak{gl}(M,N)$ be the {\it general linear Lie superalgebra}. Let $U_q(\Gaff)$ be the associated quantum affine superalgebra. This is a Hopf superalgebra neither commutative nor co-commutative, and it can be seen as a deformation of the universal enveloping algebra of the affine Lie superalgebra $L \Glie := \Glie \otimes \BC[t,t^{-1}]$.
In this paper we are mainly concerned with the structure of tensor products of finite-dimensional simple $U_q(\Gaff)$-modules.

Quantum affine superalgebras, as supersymmetric generalization of quantum affine algebras, were defined previously by Yamane \cite{Y} with Drinfeld--Jimbo generators (and with Drinfeld loop generators for $U_q(\Gaff)$). They appeared as the algebraic supersymmetries of solvable models such as the $q$--state vertex model \cite{Perk-Schultz} and the $t$--$J$ models \cite{Kojima}; their highest weight representations were identified in these models with the spaces of states to compute correlation functions. Recently, various quantum superalgebras (finite type, affine type, Yangian) together with their finite-dimensional representations associated to the simple Lie superalgebra $\mathfrak{psl}(2,2)$ are linked to the integrability structures in the context of the AdS/CFT correspondence and in Hubbard model (see \cite{Beisert} and its references). Quantum affine superalgebra associated to the exceptional Lie superalgebra $D(2,1;x)$ is related to generalized hypergeometric equations \cite{BL}.  

Compared to the rich literature on quantum affine algebras (see the review papers \cite{CH,L}), quantum affine superalgebras have been less studied. Technical difficulties already arise in the situation of finite-dimensional simple Lie superalgebras: all the Borel subalgebras are not conjugated, Weyl groups are not enough to characterize linkage, etc. 

The series of papers \cite{Z1,Z2,Z3} studied systematically finite-dimensional representations of $U_q(\Gaff)$. In \cite{Z1}, there is a similar highest $\ell$-weight classification \cite{CP2} of finite-dimensional simple modules adapted to the Drinfeld new realization of $U_q(\Gaff)$. Our motivating questions are as follows. Let $S_1,S_2,\cdots,S_n$ be such $U_q(\Gaff)$-modules.
\begin{itemize}
\item[(I)] Construct $U_q(\Gaff)$-module morphisms from $S_1 \otimes S_2$ to $S_2 \otimes S_1$. 
\item[(II)] Determine when $S_1 \otimes S_2 \otimes \cdots \otimes S_n$ is a highest $\ell$-weight module.
\end{itemize}
In the non-graded case, (I) and (II) are related to each other by the notion of {\it normalized $R$-matrix} $R_{S_1,S_2}$ proposed in \cite{AK}. This is a matrix-valued rational function depending on the ratio $\frac{a}{b}$ of spectral parameters $a,b \in \BC^{\times}$ of $S_1$ and $S_2$ respectively. Whenever it is well-defined (in other words the denominator of $R_{S_1,S_2}$ is non-zero when specialized to $S_1,S_2$), $R_{S_1,S_2}$ composed with the flip map is a module morphism from $S_1 \otimes S_2$ to $S_2 \otimes S_1$. It was proved in \cite{Kashiwara} (first conjectured in \cite{AK}) that the tensor product in (II) is of highest $\ell$-weight if  $R_{S_i,S_j}$ is well-defined for all $i < j$, under the assumption that the $S_i$ are {\it good} modules within the framework of crystal base theory. Similar results were obtained by Varagnolo--Vasserot \cite{VV} for fundamental modules over simply-laced quantum affine algebras via Nakajima quiver varieties, and by Chari \cite{Chari} in general situations via the braid group action on affine Cartan subalgebras. Here fundamental modules are certain simple modules whose highest $\ell$-weights are of particular forms \cite[Definition 3.4]{CH}.

Quite recently, normalized $R$-matrices were used to establish generalized Schur--Weyl duality between representations of quantum affine algebras and those of quiver Hecke algebras and monoidal categorifications of (quantum) cluster algebras \cite{HL,KKK,KKKO}. We refer to the table in \cite[Appendix A]{Oh} for a summary of p\^{o}les with multiplicity of normalized $R$-matrices between two fundamental modules over quantum affine algebras. We mention earlier works of Chari--Pressley \cite{CP1} on zeros and p\^{o}les of $R$-matrices for Yangians.

In this paper, we study (I) and (II) for fundamental modules over $U_q(\Gaff)$. The fundamental modules $V_{r,a}^{\varepsilon}$ over $U_q(\Gaff)$ are defined by a fusion procedure (Definition \ref{def: fundamental representations}). They depend on a spectral parameter $a \in \BC^{\times}$ and a Dynkin node together with signature $(r,\varepsilon)$: positive if $(\varepsilon = +, 1\leq r \leq M)$ and negative if $(\varepsilon = -, 1\leq r \leq N)$. When $N=0$ they are the fundamental modules in the non-graded case \cite{Date}. The main results of this paper are:
\begin{itemize}
\item[(A)] denominators of $R$-matrices between two fundamental modules (Theorems \ref{thm: pole even odd}-\ref{thm: even even poles});
\item[(B)] a sufficient condition for a tensor product of fundamental modules $\otimes_{i=1}^n V_{r_i,a_i}^{\varepsilon_i}$ to be of highest $\ell$-weight when $(\varepsilon_1\varepsilon_2\cdots \varepsilon_n) = (++\cdots + --\cdots -)$ (Theorem \ref{thm: main result}).
\end{itemize} 
In \S \ref{sec: discussion} we indicate a general idea to study tensor products of arbitrary signatures. Eventually it is enough to solve a problem of linear algebra (Question \ref{question: linear algebra}). (B) has the following two consequences. Let $S_1,S_2,\cdots,S_n$ be fundamental modules.
\begin{itemize}
\item[(C)] $S_1 \otimes S_2 \otimes \cdots \otimes S_n$ is simple if and only if so is $S_i \otimes S_j$ for all $i < j$ (Theorem \ref{thm: simplicity of tensor products}).
\item[(D)] if the parities of the $S_i$ are the same, then $S_1 \otimes S_2 \otimes \cdots \otimes S_n$ is simple if and only if it is of highest $\ell$-weight and of lowest $\ell$-weight (Corollary \ref{cor: tensor product even fundamental}).
\end{itemize}

Let us make comparisons of (A)--(D) with related results in literature. 

1. When restricted to the finite type quantum superalgebra $U_q(\Glie)$, a positive (resp. negative) fundamental module is in the category $\mathcal{O}$ (resp. its dual category $\mathcal{O}^*$) of \cite{BKK}, and their tensor products may not be semi-simple. In deducing (A) we make tricky use of a fact (Lemma \ref{lem: cyclicity Chari}) on the tensor product of a highest $\ell$-weight module and a lowest $\ell$-weight module, as opposed to the non-graded case \cite{CP1,Date,KOS,Oh} where tensor product decompositions and spectral decompositions were usually needed. 

Our arguments can be applied to the non-graded case. However it seems that even in the situation \cite{Date} of quantum affine algebras of type A the calculations would become more involved than those in Theorems \ref{thm: pole even odd}--\ref{thm: even even poles}. By the fusion procedure lowest $\ell$-weight vectors of fundamental modules are pure tensors in our situation, while they are alternating sums over symmetric groups in \cite{Date}. The denominators in Theorems \ref{thm: pole even odd}--\ref{thm: even even poles} are simpler than those in \cite[Equation (2.8)]{Date}. Notably, if $S_1$ and $S_2$ are fundamental modules of different signatures, then the denominator of $R_{S_1,S_2}$ is a polynomial of degree 1.

We expect similar simplification of denominators of $R_{S_1,S_2}$ for more general simple $U_q(\Gaff)$-modules $S_1$ in the category $\mathcal{O}$ and $S_2$ in the category $\mathcal{O}^*$. This might be related to the crystal base theory developed in \cite{BKK} for $U_q(\Glie)$-modules. 

\medskip 

2. (B) can be viewed as a super version of cyclicity results in \cite{Chari,Kashiwara,VV}.  As explained in the introduction of \cite{Z2}, the methods in the non-graded case do not admit straightforward generalizations.  Nevertheless a weaker result has been proved in \cite{Z2} under the assumption that in the tensor product of (B) the $(r_i,\varepsilon_i)$ must be the same. This weaker result has been used in \cite{Z3} to construct asymptotic modules in the sense of Hernandez--Jimbo \cite{HJ}, and it will again be needed in the present paper to validate the fusion procedure (in the proof of Proposition \ref{prop: simplicity of fundamental modules}). 

The idea of proof in \cite{Z2} is a modification of Chari's reduction arguments in \cite{Chari}: to restrict $U_q(\Gaff)$-modules to $U_q(\widehat{\mathfrak{gl}(1,1)})$-modules. Every step of reduction therein resulted in tensor products of two-dimensional simple modules. An essential improvement in this paper is to view these tensor products as Weyl modules (Lemma \ref{lem: reduction Weyl gl(1,1)}). From this viewpoint the reduction arguments in \cite{Z2} work equally well even if the $(r_i,\varepsilon_i)$ change. 

The Weyl modules over $U_q(\Gaff)$ were defined in \cite{Z1}; they are super analog of Weyl modules over quantum affine algebras \cite{CP3}. The case of $\mathfrak{gl}(1,1)$ is already useful enough to prove (B). It would be interesting to look at general case of $\mathfrak{gl}(M,N)$. 

We mention the recent works of Guay--Tan \cite{GT} on a similar cyclicity result where the braid group action in \cite{Chari} was defined for Yangians. It is question to construct similar braid group (or groupoid) action in the super case in order to study more general simple modules. For this, it might be useful to look at different RTT realizations of $U_q(\Gaff)$ (by permuting the parity of the base vectors in $\BV$ \cite[Definition 3.5]{Z2}).

\medskip

3. (C) is true for all finite-dimensional simple modules over quantum affine algebras. Its proof in \cite{H} utilized deep theory of $q$-characters of Frenkel--Reshetikhin. In our situation, since we are restricted to fundamental modules, up to some duality arguments, (C) is a direct consequence of (B).

 (D) is special in the super case, and has been proved in \cite{Z2} for all finite-dimensional simple modules over $U_q(\widehat{\mathfrak{gl}(1,1)})$. In the non-graded case, due to the action of Weyl groups, such a tensor product is of highest $\ell$-weight if and only if it is of lowest $\ell$-weight. 
 
The paper is organized as follows. \S \ref{sec: pre} prepares the necessary background on highest $\ell$-weight modules and on fundamental modules. \S \ref{sec: spin r matrix} constructs the normalized $R$-matrices between two fundamental modules from elementary ones via a fusion procedure. \S \ref{sec: even odd} computes the denominators of the normalized $R$-matrices. \S \ref{sec: Weyl modules} proves some easy but important properties of Weyl modules. \S \ref{sec: cyclicity} proves (B) by a series of reductions. \S \ref{sec: simplicity} then discusses the consequences of (B). \S \ref{sec: discussion} reduces the general case of (B) without assumption on signature to a question of linear algebra (Question \ref{question: linear algebra}).

\subsection*{Acknowledgments.} The author is grateful to David Hernandez and to Kenji Iohara and Bernard Leclerc for interesting discussions. He thanks Masato Okado for sending a reprint of \cite{Date}. He thanks the anonymous referees for useful comments and suggestions.
\section{Preliminaries}   \label{sec: pre}
Fix $M,N \in \BZ_{>0}$. This section collects basic facts on quantum superalgebras associated to the general linear Lie superalgebra $\mathfrak{gl}(M,N)$ and their representations.  

\subsection{Quantum superalgebras.}\label{subsec: quantum superalgebra}  Set $\kappa := M+N$, $I := \{1,2,\cdots,\kappa \}$ and 
\begin{displaymath}
|\cdot|: I \longrightarrow \super, i \mapsto |i| =: \begin{cases}
\even & (i\leq M),  \\
\odd & (i > M),
\end{cases} \quad d_{\cdot}: I \longrightarrow \BZ, i \mapsto d_i := \begin{cases}
1 & (i \leq M),\\
-1 & (i > M).
\end{cases}
\end{displaymath}
Set $q_i := q^{d_i}$. Set $\BP := \oplus_{i \in I} \BZ \epsilon_i$. Let $(,): \BP \times \BP \longrightarrow \BZ$ be the bilinear form defined by $(\epsilon_i,\epsilon_j) = \delta_{ij}d_i$. Let $|\cdot|: \BP \longrightarrow \super$ be the morphism of abelian groups such that $|\epsilon_i| = |i|$.

In the following, we only consider the parity $|x| \in \super$ of $x$ when either $x \in I, x \in \BP$ or $x$ is a $\super$-homogeneous vector of a vector superspace. Associated to two vector superspaces $V$ and $W$ is the graded permutation $c_{V,W}: V \otimes W \longrightarrow W \otimes V$ defined by $v \otimes w \mapsto (-1)^{|v||w|} w\otimes v$. Except in \S \ref{sec: Weyl modules}, $\Glie$ always denotes $\mathfrak{gl}(M,N)$, while $\Glie' = \mathfrak{gl}(N,M)$.

Let $\BV = \oplus_{i\in I} \BC v_i$ be the vector superspace with $\super$-grading $|v_i| = |i|$. For $i,j \in I$, let $E_{ij} \in \End(\BV)$ be the endomorphism $v_k \mapsto \delta_{jk} v_i$. Introduce the Perk--Schultz matrix $R(z,w) \in \End(\BV^{\otimes 2})[z,w]$:
\begin{eqnarray}  \label{for: Perk-Schultz matrix coefficients}
\begin{array}{rcl}
R(z,w) &=&  \sum\limits_{i\in I}(zq_i - wq_i^{-1}) E_{ii} \otimes E_{ii}  + (z-w) \sum\limits_{i \neq j} E_{ii} \otimes E_{jj} \\
&\ & + z \sum\limits_{i<j} (q_i-q_i^{-1}) E_{ji} \otimes E_{ij} + w \sum\limits_{i<j}(q_j-q_j^{-1})  E_{ij} \otimes E_{ji}.
\end{array}  
\end{eqnarray}
It is well-known that $R(z,w)$ satisfies the quantum Yang--Baxter equation:
$$R_{12}(z_1,z_2)R_{13}(z_1,z_3) R_{23}(z_2,z_3) = R_{23}(z_2,z_3)R_{13}(z_1,z_3)R_{12}(z_1,z_2). $$
Here we use the following convention for the tensor subscripts. Let $n \geq 2$ and $A_1,A_2,\cdots,A_n$ be unital superalgebras. Let $1 \leq i < j \leq n$. If $x \in A_i$ and $y \in A_j$, then 
\begin{displaymath}
(x\otimes y)_{ij} := (\otimes_{k=1}^{i-1} 1_{A_k}) \otimes x \otimes (\otimes_{k=i+1}^{j-1} 1_{A_k}) \otimes y \otimes (\otimes_{k=j+1}^n 1_{A_k}) \in \otimes_{k=1}^n A_k.
\end{displaymath}
Now we can define the quantum affine superalgebra associated to $\Glie$.
\begin{defi}\cite{Z2}      \label{def: quantum affine superalgebras}
The quantum affine superalgebra $U_q(\Gaff)$ is the superalgebra defined by 
\begin{itemize}
\item[(R1)] RTT-generators $s_{ij}^{(n)}, t_{ij}^{(n)}$ for $i,j \in I$ and $n \in \BZ_{\geq 0}$;
\item[(R2)] $\super$-grading $|s_{ij}^{(n)}| = |t_{ij}^{(n)}| = |i| + |j|$; 
\item[(R3)] RTT-relations in $U_q(\Gaff) \otimes (\End \BV^{\otimes 2})[[z,z^{-1},w,w^{-1}]]$
\begin{eqnarray*}
&& R_{23}(z,w) T_{12}(z) T_{13}(w) = T_{13}(w) T_{12}(z) R_{23}(z,w),  \\
&& R_{23}(z,w) S_{12}(z) S_{13}(w) = S_{13}(w) S_{12}(z) R_{23}(z,w),    \\
&& R_{23}(z,w) T_{12}(z) S_{13}(w) = S_{13}(w) T_{12}(z) R_{23}(z,w),    \\
&& t_{ij}^{(0)} = s_{ji}^{(0)} = 0 \quad \mathrm{for}\ 1 \leq i < j \leq \kappa,    \\
&& t_{ii}^{(0)} s_{ii}^{(0)} = 1 = s_{ii}^{(0)} t_{ii}^{(0)} \quad \mathrm{for}\ i \in I.  
\end{eqnarray*}
\end{itemize}
Here $T(z) = \sum_{i,j \in I} t_{ij}(z) \otimes E_{ij} \in (U_q(\Gaff) \otimes \End \BV)[[z^{-1}]]$ and $t_{ij}(z) = \sum_{n \in \BZ_{\geq 0}} t_{ij}^{(n)} z^{-n} \in U_q(\Gaff)[[z^{-1}]]$, and similar definition of $S(z)$ except that the $z^{-n}$ is replaced by the $z^{n}$.
\end{defi}
$U_q(\Gaff)$ has a Hopf superalgebra structure with counit $\varepsilon: U_q(\Gaff) \longrightarrow \BC$ defined by $\varepsilon(s_{ij}^{(n)}) = \varepsilon(t_{ij}^{(n)}) = \delta_{ij}\delta_{n0}$, and coproduct $\Delta: U_q(\Gaff) \longrightarrow U_q(\Gaff)^{\otimes 2}$:
\begin{eqnarray}   
&&\Delta (s_{ij}^{(n)}) = \sum_{m=0}^n \sum_{k \in I} \epsilon_{ijk}  s_{ik}^{(m)} \otimes s_{kj}^{(n-m)}, \quad  \Delta (t_{ij}^{(n)}) = \sum_{m=0}^n \sum_{k \in I} \epsilon_{ijk} t_{ik}^{(m)} \otimes t_{kj}^{(n-m)}. \label{for: coproduct for quantum affine superalgebra S}   
\end{eqnarray}
Here $\epsilon_{ijk} := (-1)^{(|i|+|k|)(|k|+|j|)}$. The antipode $\Sm: U_q(\Gaff) \longrightarrow U_q(\Gaff)$ is determined by
\begin{eqnarray}  \label{for: antipode}
&&(\Sm \otimes \Id)(S(z)) = S(z)^{-1}, \quad (\Sm \otimes \Id)(T(z)) = T(z)^{-1}.    
\end{eqnarray}
Here the RHS of the above formulas are well-defined owing to the last two relations in Definition \ref{def: quantum affine superalgebras}.
 The subalgebra of $U_q(\Gaff)$ generated by the $s_{ij}^{(0)},t_{ij}^{(0)}$ is a sub-Hopf-superalgebra denoted by $U_q(\Glie)$. To simplify notations, write $s_{ij} := s_{ij}^{(0)}, t_{ij} := t_{ij}^{(0)}$.

We recall symmetry properties of $U_q(\Gaff)$, following mainly \cite{Z2,Z3}.

For $\mathfrak{gl}(N,M) =: \Glie'$, let us define the quantum superalgebras $U_q(\Gafft),U_q(\Glie')$ in exactly the same way as $U_q(\Gaff), U_q(\Glie)$, except that we interchange $M,N$ everywhere. Let $s_{ij}'^{(n)},t_{ij}'^{(n)}$ for $i,j \in I$ and $n \in \BZ_{\geq 0}$ be the corresponding RTT generators of $U_q(\Gafft)$, so that their $\super$-degrees are $|s_{ij}'^{(n)}| = |t_{ij}'^{(n)}| = |i|' + |j|'$ where $|i|' = \even$ for $1 \leq i \leq N$ and $\odd$ otherwise. For $i \in I$, set $\widehat{i} := \kappa+1-i$.  Let $a \in \BC^{\times}$. The following are isomorphisms of Hopf superalgebras ($\varepsilon_{ij} := (-1)^{|i|+|i||j|}$ and $\varepsilon_{ij}' := (-1)^{|i|'+|i|'|j|'}$)
\begin{eqnarray}    
&& \Phi_a: U_q(\Gaff) \longrightarrow U_q(\Gaff),\quad s_{ij}^{(n)} \mapsto a^n s_{ij}^{(n)},\ t_{ij}^{(n)} \mapsto a^{-n} t_{ij}^{(n)},   \label{for: automorphisms of Z-graded superalgebras} \\
&& \Psi: U_q(\Gaff) \longrightarrow U_q(\Gaff)^{\mathrm{cop}},\quad s_{ij}^{(n)} \mapsto \varepsilon_{ji}t_{ji}^{(n)},\quad t_{ij}^{(n)} \mapsto \varepsilon_{ji}s_{ji}^{(n)},   \label{equ: transposition in quantum affine superalgebra}   \\
&& f: U_q(\Gafft) \longrightarrow U_q(\Gaff)^{\mathrm{cop}},\quad s_{ij}'^{(n)} \mapsto \varepsilon_{ji}' s_{\widehat{j}\widehat{i}}^{(n)},\quad t_{ij}'^{(n)} \mapsto \varepsilon_{ji}' t_{\widehat{j}\widehat{i}}^{(n)}.    \label{equ: from gl(M,N) to gl(N,M)}
\end{eqnarray}  
Here $A^{\mathrm{cop}}$ of a Hopf superalgebra $A$ takes the same underlying superalgebra but the twisted coproduct $\Delta^{\mathrm{cop}} := c_{A,A} \Delta$ and antipode $\Sm^{-1}$. The $\Psi, f$ restrict naturally to isomorphisms of $U_q(\Glie)$ and $U_q(\Glie')$, still denoted by $\Psi,f$. Let $f(z) \in 1+z\BC[[z]]$ and $g(z) \in 1+z^{-1}\BC[[z^{-1}]]$. The following are morphisms of superalgebras:
\begin{eqnarray}
&& \ev_a: U_q(\Gaff) \longrightarrow U_q(\Glie), \quad s_{ij}(z) \mapsto s_{ij} - za t_{ij},\quad t_{ij}(z) \mapsto t_{ij} - z^{-1}a^{-1}s_{ij},  \label{equ: evaluation morphism}  \\
&& \phi_{[f(z),g(z)]}: U_q(\Gaff) \longrightarrow U_q(\Gaff),\quad s_{ij}(z) \mapsto f(z)s_{ij}(z),\quad t_{ij}(z) \mapsto g(z) t_{ij}(z).  \label{equ: automorphism power series}
\end{eqnarray}
These morphisms satisfy natural compatibility relations. For example,
\begin{displaymath}
\Psi \circ \ev_a = \ev_{a^{-1}} \circ \Psi: U_q(\Gaff) \longrightarrow U_q(\Glie), \quad f \circ \ev_a' = \ev_a \circ f: U_q(\Gafft) \longrightarrow U_q(\Glie). 
\end{displaymath}
\subsection{Highest $\ell$-weight modules.}\label{subsec: classification} The Hopf superalgebra $U_q(\Gaff)$ is $\BP$-graded: an element $x \in U_q(\Gaff)$ is of weight $\lambda \in \BP$ if $s_{ii}^{(0)} x t_{ii}^{(0)} = q^{(\lambda,\epsilon_i)} x$ for all $i \in I$. Indeed, $s_{ij}^{(n)}$ and $t_{ij}^{(n)}$ are of weight $\epsilon_i-\epsilon_j$. Such a $\BP$-grading descends to $U_q(\Glie)$. For a $U_q(\Glie)$-module $V$ and $\lambda \in \BP$, we set $(V)_{\lambda}$ to be the subspace of $V$ formed of vectors $v$ such that $s_{ii}^{(0)} v = q^{(\lambda,\epsilon_i)} v$ for all $i \in I$, and call it the weight space of weight $\lambda$. 

Let $V$ be a $U_q(\Gaff)$-module. A non-zero vector $v \in V$ is called a {\it highest $\ell$-weight vector} if it is a common eigenvector for the $s_{ii}^{(n)},t_{ii}^{(n)}$ and it is annihilated by the $s_{ij}^{(n)},t_{ij}^{(n)}$ with $i < j$. $V$ is called a highest $\ell$-weight module if it is generated as a $U_q(\Gaff)$-module by a highest $\ell$-weight vector. Similarly, there are the notions of {\it lowest $\ell$-weight vector/module} by replacing $(i<j)$ with $(i>j)$.  By dropping the $(n)$, we obtain the notions of {\it highest/lowest weight vector/module} related to $U_q(\Glie)$-modules. According to Equation \eqref{for: coproduct for quantum affine superalgebra S}, a tensor product of highest/lowest ($\ell$-)weight vectors is again a highest/lowest ($\ell$-)weight vector. This is not necessarily true when replacing \lq\lq vector\rq\rq\ with \lq\lq module\rq\rq, yet we have the following:
\begin{lem} \cite[Lemma4.5]{Z2}   \label{lem: cyclicity Chari}
Let $V_+$ (resp. $V_-$) be a $U_q(\Gaff)$-module of highest (resp. lowest) $\ell$-weight. Let $v_+ \in V_+$ (resp. $v_- \in V_-$) be a highest (resp. lowest) $\ell$-weight vector. Then the $U_q(\Gaff)$-module $V_+ \otimes V_-$ (resp. $V_- \otimes V_+$) is generated by $v_+ \otimes v_-$ (resp. $v_- \otimes v_+$).
\end{lem}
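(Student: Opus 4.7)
My plan is to exploit the triangular structure coming from the RTT presentation. Let $U^+$ (respectively $U^-$) denote the subalgebra of $U_q(\Gaff)$ generated by the $s_{ij}^{(n)}, t_{ij}^{(n)}$ with $i<j$ (resp.\ $i>j$), and let $U^0$ be generated by the Cartan-type elements $s_{ii}^{(n)}, t_{ii}^{(n)}$. Since $v_+$ is a highest $\ell$-weight vector one has $U^+ v_+ = 0$ and $U^0 v_+ \subset \BC v_+$, so $V_+ = U^- v_+$; dually $V_- = U^+ v_-$. Setting $W := U_q(\Gaff)(v_+\otimes v_-)$, it suffices to prove that every tensor $Xv_+ \otimes Yv_-$ with $X$ a monomial in lowering generators and $Y$ a monomial in raising generators lies in $W$.

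The key computation is the coproduct formula \eqref{for: coproduct for quantum affine superalgebra S}, which gives
\begin{displaymath}
\Delta(t_{ij}^{(n)})(u_+\otimes u_-) = \sum_{m,k}\epsilon_{ijk}\,(t_{ik}^{(m)}u_+)\otimes(t_{kj}^{(n-m)}u_-),
\end{displaymath}
and analogously for $s_{ij}^{(n)}$. When a raising generator ($i<j$) acts on $v_+\otimes w_-$, the vanishing $t_{ik}^{(m)}v_+=0$ for $k>i$ forces $k\le i<j$, and the $k=i$ contribution is a nonzero Cartan eigenvalue of $v_+$ times $v_+\otimes t_{ij}^{(n-m)}w_-$; the remaining terms have first factor of strictly lower $U^-$-weight. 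A symmetric analysis governs the action of a lowering generator ($a>b$) on $w_+\otimes v_-$, where $t_{kb}^{(n-m)}v_-=0$ for $k>b$ isolates the distinguished term $t_{ab}^{(n)}w_+\otimes v_-$.

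I would then proceed by induction on a suitably refined height function. Measure each tensor $w_+\otimes w_-$ by the pair $(d_-, d_+)$ where $d_-=\mathrm{ht}(\mathrm{wt}(w_-)-\lambda_-)$ records the raising depth in $V_-$ and $d_+=\mathrm{ht}(\lambda_+-\mathrm{wt}(w_+))$ the lowering depth in $V_+$; order pairs lexicographically (or by total depth) so that the cross-terms produced by the coproduct formula are strictly smaller in this order. At the base $(0,0)$ we have $v_+\otimes v_-\in W$. For the inductive step, write the target as $Xv_+\otimes Yv_-$, apply $\Delta(y)$ for $y$ a leading raising factor of $Y$ to the element $Xv_+\otimes Y'v_-$ (in $W$ by primary induction on $d_-$) to produce $Xv_+\otimes Yv_-$ up to a nonzero scalar plus strictly smaller corrections, and use the secondary induction in $d_+$ to place each $Xv_+\otimes v_-$ in $W$ by iterated application of $\Delta$ of lowering generators to $v_+\otimes v_-$. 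The parenthetical statement for $V_-\otimes V_+$ follows by the same argument with raising and lowering exchanged (or by applying the antiautomorphism $\Psi$ of \eqref{equ: transposition in quantum affine superalgebra}).

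The main technical obstacle is the cross-terms in which both tensor factors are simultaneously modified by the coproduct: a naive single induction on either $d_+$ or $d_-$ alone fails, because acting with a generator can raise one depth while lowering the other. The resolution is to choose the well-founded order so that every cross-term strictly precedes the target; the triangular structure of $\Delta$ on RTT generators, combined with the vanishings $t_{ik}^{(m)}v_+=0$ for $k>i$ and $t_{kj}^{(n-m)}v_-=0$ for $k>j$, guarantees that the isolated leading coefficient at each step is a nonzero product of Cartan eigenvalues on $v_\pm$, so the induction closes.
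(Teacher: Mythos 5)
The key inductive claim in your argument does not hold. Consider the case $X=1$, i.e.\ the target is $v_+ \otimes Y v_-$ with $Y = t_{ij}^{(n)}Y'$ for some $i<j$. Applying $t_{ij}^{(n)}$ to $v_+ \otimes Y'v_-$ yields the coproduct sum $\sum_{m,k}\epsilon_{ijk}\,(t_{ik}^{(m)}v_+)\otimes(t_{kj}^{(n-m)}Y'v_-)$; the terms with $k>i$ vanish, the $k=i$ terms give the target up to a triangular change of basis, and the cross-terms are indexed by $k<i$. For such $k$ the first factor $t_{ik}^{(m)}v_+$ has weight $\lambda_+ - (\epsilon_k-\epsilon_i)$, so $d_+ = i-k > 0$, while the second factor $t_{kj}^{(n-m)}Y'v_-$ has $d_- = d_-(Y'v_-) + (j-k) > d_-(Y'v_-) + (j-i) = d_-(Yv_-)$. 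Thus the cross-terms have \emph{strictly larger} $d_+$ \emph{and} strictly larger $d_-$ than the target. In the lexicographic order on $(d_-,d_+)$ (or in total depth $d_-+d_+$, or indeed in any order compatible with the natural direction of growth from $(0,0)$) the cross-terms are strictly \emph{larger} than the target, not strictly smaller as you assert. The same phenomenon occurs on the lowering side: applying $t_{ij}^{(n)}$ with $i>j$ to $w_+\otimes v_-$ produces cross-terms with $k<j$ in which both $d_+$ and $d_-$ go up. So the well-founded induction you describe cannot close, because the terms one would need to have already handled lie strictly above the target in your ordering; and there is no base case "at the top" either, since for general (possibly infinite-dimensional) highest/lowest $\ell$-weight modules the depths $d_\pm$ are unbounded.

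This is precisely why the paper does not give an RTT-based proof. Lemma~\ref{lem: cyclicity Chari} is imported from \cite[Lemma~4.5]{Z2}, and the paper explicitly records that that proof "utilized the Drinfeld new realization of $U_q(\Gaff)$." The essential advantage of the Drinfeld realization is that the Drinfeld-current raising (resp.\ lowering) generators admit a refined coproduct congruence: modulo a two-sided ideal whose left (resp.\ right) tensor legs annihilate a highest (resp.\ lowest) $\ell$-weight vector, the coproduct is of the form "generator $\otimes 1$ plus loop-Cartan $\otimes$ generator," with no cross-terms surviving on $v_+\otimes y$ (resp.\ $x\otimes v_-$). This makes the induction on depth close. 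The RTT coproduct \eqref{for: coproduct for quantum affine superalgebra S} is exact, not a congruence modulo a convenient ideal, and the intermediate indices $k$ strictly between the extremes produce the stubborn cross-terms you would need to control. Either reprove the Drinfeld-style triangular coproduct congruence from the RTT side (nontrivial, and essentially reconstructs part of the Ding--Frenkel dictionary) or cite \cite{Z2} directly; the argument as written does not prove the lemma.
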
  
The proof of this lemma in \cite{Z2} utilized the Drinfeld new realization of $U_q(\Gaff)$.

For $\lambda = \sum_i \lambda_i\epsilon_i \in \BP$, let $L(\lambda)$ be the simple $U_q(\Glie)$-module of highest weight $\lambda$; it is finite-dimensional if and only if $\lambda_i \geq \lambda_{i+1}$ for $i \neq M$; see \cite{Zr}.
\begin{example}   \label{example: natural representation quantum superalgebra}
 The vector representation $\rho_{(1)}$ of $U_q(\Glie)$ on $\BV$ is defined by
\begin{align*}
& \rho_{(1)} (s_{ii}) = q_i  E_{ii} + \sum_{j\neq i} E_{jj} = \rho_{(1)}(t_{ii}^{-1}) \quad \textrm{for}\ i \in I,  \\
& \rho_{(1)} (s_{ij}) = (q_i - q_i^{-1}) E_{ij},\quad \rho_{(1)}(t_{ji}) = - (q_i - q_i^{-1}) E_{ji} \quad \textrm{for}\ i < j.
\end{align*}
$v_1$ (resp. $v_{\kappa}$) is a highest (resp. lowest) weight vector and $v_i$ is of weight $\epsilon_i$ for $i \in I$. The resulting $U_q(\Glie)$-module $\BV$ is $L(\epsilon_1)$. For $a \in \BC^{\times}$, let $\BV(a)$ denote the $U_q(\Gaff)$-module $\ev_a^* \BV$.
\end{example}
\begin{example}  \label{example: negative natural representation}
The pull back $\Psi^*\BV$ of the $U_q(\Glie)$-module $\BV$ by the isomorphism $\Psi$ in \eqref{equ: transposition in quantum affine superalgebra} defines another representation $\rho_{(1)} \Psi$ of $U_q(\Glie)$ on $\BV$. Let $\BW$ be the corresponding $U_q(\Glie)$-module. For distinction, let us write $w_i := \Psi^*v_i$ for $i \in I$. Now $w_{\kappa}$ (resp. $w_1$) is a highest (resp. lowest) weight vector and $w_i$ is of weight $-\epsilon_i$. So $\BW = L(-\epsilon_{\kappa})$ as $U_q(\Glie)$-modules. For $a \in \BC^{\times}$, let $\BW(a)$ denote the $U_q(\Gaff)$-module $\ev_a^* \BW$.
\end{example}

To motivate the definition fundamental modules, let us recall the highest $\ell$-weight classification of finite-dimensional simple $U_q(\Gaff)$-modules from \cite{Z1,Z3}. Let $S$ be such a module. Firstly $S$ contains a unique (up to scalar multiple) highest $\ell$-weight vector $v$. Secondly, for $1\leq i < \kappa$, the eigenvalues of $s_{ii}(z)s_{i+1,i+1}(z)^{-1}, t_{ii}(z) t_{i+1,i+1}(z)^{-1}$ associated to $v$ turn out to be the $z = 0, z = \infty$ Taylor expansions of a rational function $f_i(z) \in \BC(z)$ satisfying: \footnote{We use rational functions instead of Drinfeld polynomials in \cite[Prop.4.12]{Z1} and \cite[Prop.6.7]{Z3}; they are indeed equivalent under the Ding--Frenkel homomorphism reviewed in \cite[Theorem 3.12]{Z2}.}
\begin{itemize}
\item[(1)] if $i \neq M$, then $f_i(z)$ is a product of the $q_i \frac{1-za}{1-zaq_i^2}$ with $a \in \BC^{\times}$;
\item[(2)] $f_M(z)$ is a product of the $c \frac{1-za}{1-zac^2}$ with $c,a \in \BC^{\times}$. 
\end{itemize}
Thirdly $S \mapsto \Pi(S) := (f_i(z))_{1\leq i < \kappa} \in \BC(z)^{\kappa-1}$ establishes a bijection between the isomorphism classes of finite-dimensional simple $U_q(\Gaff)$-modules up to tensor products with one-dimensional modules and elements in $\BC(z)^{\kappa-1}$ with conditions (1)--(2). Lastly, if $S,S'$ are two finite-dimensional simple $U_q(\Gaff)$-modules, then by Equation \eqref{for: coproduct for quantum affine superalgebra S}, $S \otimes S'$ contains a highest $\ell$-weight vector which gives rise to another simple module $S''$ with
$$ \Pi(S'') = \Pi(S)\Pi(S') \in \BC(z)^{\kappa-1}.$$

\begin{rem} \label{rem: highest l weight of evaluation modules}
Let $\lambda = \sum_{i} \lambda_i \epsilon_i \in \BP$ and $a \in \BC^{\times}$. When $L(\lambda)$ is finite-dimensional,
\begin{align*}
\Pi(\ev_a^*L(\lambda)) &= ( \frac{q^{\lambda_1} - za q^{-\lambda_1}}{q^{\lambda_2} - za q^{-\lambda_2}}, \cdots, \frac{q^{\lambda_{M-1}} - za q^{-\lambda_{M-1}}}{q^{\lambda_M}-zaq^{-\lambda_M}},  \frac{q^{\lambda_{M}} - za q^{-\lambda_{M}}}{q^{-\lambda_{M+1}}-zaq^{\lambda_{M+1}}}, \\
 &\quad \ \frac{q^{-\lambda_{M+1}} - za q^{\lambda_{M+1}}}{q^{-\lambda_{M+2}}-zaq^{\lambda_{M+2}}},\cdots, \frac{q^{-\lambda_{\kappa-1}} - za q^{\lambda_{\kappa-1}}}{q^{-\lambda_{\kappa}}-zaq^{\lambda_{\kappa}}} ).
\end{align*} 
\end{rem}

\subsection{Fundamental representations.} We are interested in such simple $U_q(\Gaff)$-modules $S$ that all but one components of $\Pi(S)$ are $1$. They can be constructed by fusion procedures.
\begin{defi}   \label{def: fundamental representations}
Let $a \in \BC^{\times}$ and $s,t \in \BZ_{>0}$ be such that $s \leq M$ and $t \leq N$. The sub-$U_q(\Gaff)$-module of $\otimes_{j=1}^s \BV(aq^{-2j})$ generated by $v_{\kappa}^{\otimes s}$ is called a positive fundamental module and denoted by $V_{s,a}^+$. The sub-$U_q(\Gaff)$-module of $\otimes_{j=1}^t \BW(aq^{2j})$ generated by $w_1^{\otimes t}$ is called a negative fundamental module and denoted by $V_{t,a}^-$.
\end{defi}
The terminology \lq\lq positive/negative\rq\rq\ will be justified at the end of this section. The following proposition will be proved in \S \ref{sec: simplicity} (page~\pageref{page:simplicity}) when twisted duals are introduced.
\begin{prop} \label{prop: simplicity of fundamental modules}
$V_{s,a}^+$ and $V_{t,a}^-$ are simple $U_q(\Gaff)$-modules for $1\leq s \leq M, 1\leq t \leq N$.
\end{prop}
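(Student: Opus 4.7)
The plan is to combine the weaker cyclicity of \cite{Z2} with the twisted dual construction to be introduced in Section~\ref{sec: simplicity}, in order to realize $V_{s,a}^+$ as both a highest and a lowest $\ell$-weight module whose extremal $\ell$-weights match a fundamental class in the classification of \cite{Z1, Z3}. To start, $V_{s,a}^+$ is of lowest $\ell$-weight with generator $v_\kappa^{\otimes s}$: Example~\ref{example: natural representation quantum superalgebra} shows that $v_\kappa \in \BV(aq^{-2j})$ is annihilated by every $s_{ij}^{(n)}, t_{ij}^{(n)}$ with $i > j$, and the coproduct \eqref{for: coproduct for quantum affine superalgebra S} promotes this property to $v_\kappa^{\otimes s}$. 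The same argument using Example~\ref{example: negative natural representation} handles $V_{t,a}^-$ with its lowest $\ell$-weight generator $w_1^{\otimes t}$.

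Next, the ambient tensor product $T:=\otimes_{j=1}^s \BV(aq^{-2j})$ is a tensor of rank-$1$ positive fundamental modules $V_{1,aq^{-2j+2}}^+$ sharing the common signature $(1,+)$, so the weaker cyclicity of \cite{Z2} recalled in the Introduction implies that $T$ is of highest $\ell$-weight with cyclic generator $v_1^{\otimes s}$; a symmetric statement holds for $\otimes_{j=1}^t \BW(aq^{2j})$. The twisted dual functor to be built in Section~\ref{sec: simplicity} from the antiautomorphism $\Psi$ of \eqref{equ: transposition in quantum affine superalgebra} and the isomorphism $f$ of \eqref{equ: from gl(M,N) to gl(N,M)} should be exact, reverse the tensor order, exchange highest and lowest $\ell$-weight modules, and, up to a controlled shift of spectral parameters, match $\BV(b)$ with $\BW(b')$. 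Applying it to the embedding $V_{s,a}^+ \hookrightarrow T$ produces a highest $\ell$-weight module sitting inside a reordered tensor of the form $\otimes_{j=1}^s \BW(b q^{2j})$, which is again a tensor of rank-$1$ fundamental modules of common signature $(1,-)$, so the weaker cyclicity of \cite{Z2} applies once more. Lemma~\ref{lem: cyclicity Chari} then gives the joint cyclicity needed to pair the resulting highest $\ell$-weight vector with $v_\kappa^{\otimes s}$, and Remark~\ref{rem: highest l weight of evaluation modules} lets me compute the corresponding $\ell$-weight and identify it with that of the fundamental simple module predicted by \cite{Z1, Z3}. Since $V_{s,a}^+$ is then simultaneously of highest and of lowest $\ell$-weight with the expected extremal data, the classification forces it to be simple. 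The argument for $V_{t,a}^-$ follows by the symmetric construction exchanging $\BV \leftrightarrow \BW$ and the two signatures.

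The principal technical obstacle I anticipate is the precise bookkeeping of spectral parameters under the twisted dual: one must verify that this functor sends a positive (resp.\ negative) fundamental module with a given spectral parameter $a$ to a specific negative (resp.\ positive) fundamental module with a predictable shift of $a$, so that the form of the dual tensor $\otimes_{j=1}^s \BW(b q^{2j})$ is exactly of the type to which the weaker cyclicity of \cite{Z2} applies, and so that Lemma~\ref{lem: cyclicity Chari} can be used to match generators. Once this dictionary is in place, the remaining steps are an assembly of results already recorded in the excerpt.
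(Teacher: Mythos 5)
Your proposal diverges from the paper's argument, and the divergence introduces gaps that cannot be repaired along the lines sketched.

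The paper's actual proof never tries to show $V_{s,a}^+$ is of highest $\ell$-weight. It invokes \cite[Prop.4.7]{Z2}: the tensor $\otimes_{i=1}^s\BV(aq^{2i})$ (parameters \emph{increasing}) is of lowest $\ell$-weight, i.e.\ cyclic from $v_\kappa^{\otimes s}$; a finite-dimensional cyclic module has a unique maximal proper submodule, so its dual has a \emph{simple socle}. Applying the twisted dual $(\cdot)^\vee$ --- built from $\Sm\Psi$, hence tensor-order-\emph{preserving} and satisfying $\BV(a)^\vee\simeq\BV(a^{-1}q^{2M-2N})$ --- converts $\otimes_{i}\BV(aq^{2i})$ into $\otimes_{i}\BV(cq^{-2i})$ with simple socle generated by $v_\kappa^{\otimes s}$; and $V_{s,a}^+$ is by definition the submodule generated by $v_\kappa^{\otimes s}$, hence equals this socle and is simple. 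The engine is the passage from \lq\lq cyclic\rq\rq\ to \lq\lq dual has simple socle,\rq\rq\ not any comparison of highest and lowest $\ell$-weights.

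The most serious flaw in your proposal is the closing inference: being simultaneously of highest $\ell$-weight and of lowest $\ell$-weight, even with the \lq\lq expected\rq\rq\ extremal data, does \emph{not} force simplicity in the super setting. Example~\ref{example: lowest weight even odd} of this very paper produces $\BV(a)\otimes\BW(b)$ with $a\neq b$, $b=aq^{-2M+2N}$, $M\neq N$: it is of highest and of lowest $\ell$-weight yet not simple. Matching extremal data cannot rescue the argument, since when $V$ is cyclic from a lowest $\ell$-weight vector $v_-$ and $V\twoheadrightarrow V(\Pi)$ is its simple highest-$\ell$-weight quotient, $v_-$ cannot lie in the (proper) kernel, so $V(\Pi)$ automatically has the same lowest $\ell$-weight as $V$. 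This is precisely why Corollary~\ref{cor: cyclicity simplicity of tensor products of even fundamental modules} is restricted to positive fundamental modules and requires Theorem~\ref{thm: even even poles} in its proof.

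There are further problems. The functor you describe (tensor-order-reversing, exchanging highest and lowest, sending $\BV(b)$ to some $\BW(b')$) is not the twisted dual of \S\ref{sec: simplicity}: the twisted dual preserves tensor order and keeps $\BV$-type modules as $\BV$-type. What you describe is the pull-back $\Psi^*$. But $\Psi^*$ is an equivalence of module categories coming from a superalgebra isomorphism; it converts \lq\lq cyclic\rq\rq\ into \lq\lq cyclic\rq\rq\ rather than into \lq\lq has simple socle,\rq\rq\ and so gives no new leverage. Moreover $\Psi^*T$ is of \emph{lowest} $\ell$-weight (since $T$ is of highest $\ell$-weight), its spectral parameters come out in an order for which the cyclicity criterion of \cite{Z2} is not satisfied, and the conclusion you want (\lq\lq highest $\ell$-weight\rq\rq ) is false for $\Psi^*T$. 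It is also unclear what Lemma~\ref{lem: cyclicity Chari} (about $V_+\otimes V_-$) is meant to do at the step where you invoke it. Finally, using Remark~\ref{rem: highest l weight of evaluation modules} to compute $\Pi(V_{s,a}^+)$ presupposes the identification of $V_{s,a}^+$ with an evaluation module in Lemma~\ref{lem: fundamental modules}, whose proof already assumes the simplicity you are trying to establish, so the argument is circular.
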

 The following theorem is a special case of more general results in \cite{BKK}. 
\begin{theorem}  \label{thm: BKK Schur-Weyl duality}   
Let $1\leq s \leq M$. The $U_q(\Glie)$-module $L(\epsilon_1)^{\otimes s} = \BV^{\otimes s}$ is completely reducible. Its submodule generated by $v_{\kappa}^{\otimes s}$ is isomorphic to $L(\epsilon_1+\epsilon_2+\cdots+\epsilon_s)$ whose weight spaces are one-dimensional and whose weights are the  $\epsilon_{i_1} + \epsilon_{i_2} + \cdots + \epsilon_{i_s}$ where: $i_1 \leq i_2 \leq \cdots \leq i_s$; if $i_k = i_{k+1}$ then $i_k > M$. Denote this sub-$U_q(\Glie)$-module by $V_s^+$.
\end{theorem}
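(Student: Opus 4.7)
The plan is to invoke the crystal-base theory of \cite{BKK} to obtain complete reducibility of $\BV^{\otimes s}$ and the explicit weight structure of its simple summands, and then to pin down the summand containing $v_{\kappa}^{\otimes s}$ by a short weight-space argument.

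According to \cite{BKK}, the vector representation $\BV$ of $U_q(\Glie)$ admits a crystal basis, and its tensor powers $\BV^{\otimes s}$ inherit one, yielding a complete decomposition into simple modules of the form $L(\lambda)$, where $\lambda$ is a dominant weight indexed by a Young diagram with $s$ boxes fitting into the $(M,N)$-hook. The weight basis of each such $L(\lambda)$ is parametrized by super-semistandard Young tableaux of shape $\lambda$ (entries weakly increasing along rows and down columns, with strict increase along columns for even entries $\leq M$ and along rows for odd entries $> M$), so in particular every weight space of $L(\lambda)$ is one-dimensional.

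To single out the summand $V_s^+$ I would argue as follows. Since $v_{\kappa}$ is a lowest weight vector of $\BV$ of weight $\epsilon_{\kappa}$, the coproduct formula \eqref{for: coproduct for quantum affine superalgebra S} makes $v_{\kappa}^{\otimes s}$ a lowest weight vector of $\BV^{\otimes s}$ of weight $s\epsilon_{\kappa}$. The only way to write $s\epsilon_{\kappa} = \epsilon_{i_1} + \cdots + \epsilon_{i_s}$ is to take $i_1 = \cdots = i_s = \kappa$, so $(\BV^{\otimes s})_{s\epsilon_{\kappa}}$ is one-dimensional and $v_{\kappa}^{\otimes s}$ lies in, and therefore generates, a unique simple summand. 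A tableau contributing to this weight must have all entries equal to $\kappa$; the strict-increase rule on odd entries along rows then forces the underlying Young diagram to be a single column of height $s$, so the summand is $L(\epsilon_1 + \epsilon_2 + \cdots + \epsilon_s)$. Specializing the tableau description to this single column recovers exactly the weights $\epsilon_{i_1} + \cdots + \epsilon_{i_s}$ with $i_1 \leq \cdots \leq i_s$ and with $i_k = i_{k+1}$ only when $i_k > M$, each arising from a unique tableau.

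The main obstacle is the invocation of \cite{BKK}: once the super-Schur--Weyl decomposition of $\BV^{\otimes s}$ and the hook condition are granted, the identification of $V_s^+$ and the combinatorial description of its weights reduce to a short weight-space bookkeeping.
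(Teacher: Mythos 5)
Your proposal takes essentially the same route as the paper, which simply cites \cite{BKK} and then explains the identification of $V_s^+$ with the single-column hook diagram via semistandard tableaux; your weight-space argument for pinning down the summand containing $v_{\kappa}^{\otimes s}$ is sound for $s\leq M$.

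One sentence in your write-up is wrong as stated and should be deleted or corrected, even though it is not load-bearing: you claim that because the weight basis of each summand $L(\lambda)$ is parametrized by super-semistandard tableaux of shape $Y^{\lambda}$, ``in particular every weight space of $L(\lambda)$ is one-dimensional.'' That inference is false in general --- distinct semistandard tableaux of the same shape can share the same content, so weight spaces of a general hook $L(\lambda)$ need not be one-dimensional (this already fails for $\mathfrak{gl}(3,0)$ and a two-row shape). What is true, and what you actually use, is that for the \emph{single-column} shape the content of a tableau determines it uniquely (the entries must be listed weakly increasing down the column), so the weight spaces of $L(\epsilon_1+\cdots+\epsilon_s)$ are one-dimensional and are exactly the $\epsilon_{i_1}+\cdots+\epsilon_{i_s}$ with $i_1\leq\cdots\leq i_s$ and $i_k=i_{k+1}\Rightarrow i_k>M$. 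Restrict the claim to the single-column case and the argument is correct.
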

In \cite{BKK}, to certain $\lambda \in \BP$ is associated an $(M,N)$-hook Young diagram $Y^{\lambda}$ (an ordinary Young diagram without box at the $(M+1,N+1)$-position). Such $L(\lambda)$ has a crystal basis in the sense of Kashiwara labeled by semi-standard tableaux (assignment of numbers between $1$ and $\kappa$ to the boxes according to certain rules) in $Y^{\lambda}$. In the above theorem, $\epsilon_1+\epsilon_2+\cdots +\epsilon_s$ corresponds to the Young diagram with $s$ boxes in one column. The conditions of the $i_k$ are exactly those of being a semi-standard tableau. For example, when $M=N=2$, the weights (and crystal basis vectors) are indexed by the following tableaux: 
$$M=N=2, V_2^+ = L(\epsilon_1+\epsilon_2):\quad \ \begin{Young}
1 \cr
2 \cr
\end{Young},\quad  \begin{Young}
1 \cr
3 \cr
\end{Young},\quad \begin{Young}
1 \cr
4 \cr
\end{Young},\quad  \begin{Young}
2 \cr
3 \cr
\end{Young},\quad \begin{Young}
2 \cr
4 \cr
\end{Young},\quad \begin{Young}
3 \cr
3 \cr
\end{Young}, \quad \begin{Young}
3 \cr
4 \cr
\end{Young},\quad \begin{Young}
4 \cr
4 \cr
\end{Young}. $$

\begin{lem}     \label{lem: highest weight vector even fund}
For $1\leq s \leq M$, the sub-$U_q(\Glie)$-module $V_s^+$ has a highest weight vector
\begin{displaymath}
v^{(s)} := \sum_{\sigma \in \mathfrak{S}_s} (-q)^{l(\sigma)} v_{\sigma(1)} \otimes v_{\sigma(2)} \otimes \cdots \otimes v_{\sigma(s)} \in \BV^{\otimes s}.
\end{displaymath}
\end{lem}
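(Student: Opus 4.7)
\emph{Proof proposal.} First I check that $v^{(s)}$ is non-zero and has weight $\epsilon_1 + \cdots + \epsilon_s$. Since $1,2,\ldots,s \le M$, the vectors $v_1,\ldots,v_s$ are pairwise distinct basis elements in the even part of $\BV$, so the pure tensors $v_{\sigma(1)} \otimes \cdots \otimes v_{\sigma(s)}$ indexed by $\sigma \in \mathfrak{S}_s$ are linearly independent in $\BV^{\otimes s}$ and each has weight $\sum_k \epsilon_{\sigma(k)} = \epsilon_1 + \cdots + \epsilon_s$. For $i<j$, the vector $s_{ij} v^{(s)}$ lies in the weight space $\epsilon_1+\cdots+\epsilon_s + \epsilon_i - \epsilon_j$; when $j > s$ the $\epsilon_j$-coordinate of this weight equals $-1$, which cannot occur in $\BV^{\otimes s}$, so $s_{ij} v^{(s)} = 0$ automatically by a weight argument.

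For the remaining case $i < j \le s$, all indices are bounded by $M$, and the action on $\mathrm{span}(v_1,\ldots,v_s)^{\otimes s}$ factors through the non-super subalgebra $U_q(\mathfrak{gl}(s)) \subset U_q(\Glie)$. It is enough to verify $s_{i,i+1} v^{(s)} = 0$ for $1 \le i \le s-1$, since on a weight vector every $s_{ij}$ with $i<j \le s$ can be written as an iterated $q$-commutator of the adjacent $s_{k,k+1}$'s with Cartan elements, so induction on $j-i$ reduces to this case. Using Example~\ref{example: natural representation quantum superalgebra} together with the iterated coproduct
\[
\Delta^{(s-1)}(s_{i,i+1}) = \sum_{k=1}^{s} s_{ii}^{\otimes(k-1)} \otimes s_{i,i+1} \otimes s_{i+1,i+1}^{\otimes(s-k)},
\]
the action on a pure tensor $v_\sigma := v_{\sigma(1)} \otimes \cdots \otimes v_{\sigma(s)}$ equals $(q-q^{-1})\, q^{[\sigma^{-1}(i) < \sigma^{-1}(i+1)]}\, v_\sigma'$, where $v_\sigma'$ is obtained by replacing the unique entry $i+1$ by $i$. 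Pairing $\sigma$ with $\tau\sigma$, where $\tau \in \mathfrak{S}_s$ is the transposition swapping the values $i$ and $i+1$, one has $v_\sigma' = v_{\tau\sigma}'$ and $l(\tau\sigma) = l(\sigma) \pm 1$ with the sign governed by the same indicator, so the two contributions to $s_{i,i+1} v^{(s)}$ cancel. Hence $v^{(s)}$ is a highest weight vector in $\BV^{\otimes s}$.

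It remains to show $v^{(s)} \in V_s^+$. By Theorem~\ref{thm: BKK Schur-Weyl duality}, $\BV^{\otimes s}$ is completely reducible; write $\BV^{\otimes s} = V_s^+ \oplus W$ as $U_q(\Glie)$-modules and decompose $v^{(s)} = u + w$. Both $u$ and $w$ are highest weight vectors of weight $\epsilon_1+\cdots+\epsilon_s$ (possibly zero) by equivariance of the projection. If $w \ne 0$, then inside the semi-simple module $W$ the cyclic submodule $U_q(\Glie) \cdot w$ must be isomorphic to $L(\epsilon_1+\cdots+\epsilon_s) \cong V_s^+$ (project $w$ onto each isotypic component and invoke cyclicity), hence contains a non-zero lowest weight vector of weight $s\epsilon_\kappa$ matching that of $v_\kappa^{\otimes s}$. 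But the $s\epsilon_\kappa$-weight space of $\BV^{\otimes s}$ is $\BC v_\kappa^{\otimes s} \subset V_s^+$, which meets $W$ trivially---a contradiction. Hence $w = 0$ and $v^{(s)} \in V_s^+$. The crucial step is the cancellation producing $s_{i,i+1} v^{(s)} = 0$; it is a mild $q$-reformulation of the classical fact that the Jimbo $q$-antisymmetrizer spans the top exterior power of the defining $U_q(\mathfrak{gl}(s))$-module, so I expect no genuinely new obstacle.
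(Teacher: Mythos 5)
Your proof is correct and follows essentially the same route as the paper: reduce to adjacent $s_{i,i+1}$, cancel contributions pairwise under left multiplication by the simple transposition using the length relation $l(\tau\sigma)=l(\sigma)\pm 1$, and then use complete reducibility (Theorem~\ref{thm: BKK Schur-Weyl duality}) together with the one-dimensionality of the $s\epsilon_\kappa$-weight space of $\BV^{\otimes s}$ to conclude $v^{(s)}\in V_s^+$. The only cosmetic difference is that you compute the scalar $(q-q^{-1})q^{[\sigma^{-1}(i)<\sigma^{-1}(i+1)]}$ directly, whereas the paper reduces once more to the $s=2$ case before checking; and you phrase the membership step as a contradiction from a complement $W$, whereas the paper shows directly that $U_q(\Glie)v^{(s)}$ is simple and contains $v_\kappa^{\otimes s}$.
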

Here $l(\sigma)$ denotes the length of a permutation $\sigma \in \mathfrak{S}_s$.
\begin{proof}
Let us first prove that $v^{(s)}$ is a highest weight vector. By the weight grading on $\BV^{\otimes s}$ and $U_q(\Glie)$, it is enough to show that $s_{jk} v^{(s)} = 0$ for $1\leq j < k \leq s$. By using the relations of the $s_{jk}$ (see the proof of \cite[Prop.4.6]{Z3}), we can assume that $k=j+1$.   Let $X$ be the set of permutations $\sigma$ such that $\sigma^{-1}(j) < \sigma^{-1}(k)$. Let $\theta$ be the simple transposition $(j,k)$. Then $\mathfrak{S}_s$ is a disjoint union of $X$ and $\theta X$, and $l(\theta \sigma) = l(\sigma) + 1$ whenever $\sigma \in X$. By using the formulas in Example \ref{example: natural representation quantum superalgebra}, we are reduced to the case $s = 2=k$ and $j=1$. Now $v_1\otimes v_2 - q v_2\otimes v_1$ is easily shown to be of highest weight.

$v^{(s)}$ generates a sub-$U_q(\Glie)$-module $S$. It is of highest weight $\epsilon_1+\epsilon_2+\cdots +\epsilon_s$ and completely reducible by Theorem \ref{thm: BKK Schur-Weyl duality}; $S$ must be simple, and $(S)_{s\epsilon_{\kappa}} = (\BV^{\otimes s})_{s\epsilon_{\kappa}} = \BC v_{\kappa}^{\otimes s}$. This implies that $V_s^+ \subseteq S$. Since $S$ is a simple $U_q(\Glie)$-module, $V_s^+ = S$ and $v^{(s)} \in V_s^+$. 
\end{proof}
\begin{lem}  \label{lem: fundamental modules}                           
Let $a \in \BC^{\times}$ and $1\leq s \leq M$. as $U_q(\Gaff)$-modules, $V_{s,a}^+ \cong \ev_{aq^{-2s}}^* (V_s^+) \otimes D$ for some one-dimensional module $D$. As sub-$U_q(\Glie)$-modules of $\BV^{\otimes s}$, we have $V_{s,a}^+ = V_s^+$. 
\end{lem}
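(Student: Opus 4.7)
The plan is to prove the two statements simultaneously by a sandwich dimension argument: establish the inclusion $V_s^+ \subseteq V_{s,a}^+$ as subspaces of $\BV^{\otimes s}$ directly, identify $V_{s,a}^+$ abstractly as $\ev_{aq^{-2s}}^*V_s^+$ twisted by a one-dimensional module via the classification of $\S2.2$, and then match dimensions to upgrade the inclusion to an equality of subspaces.

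First I would note that the evaluation morphism $\ev_b$ of \eqref{equ: evaluation morphism} restricts to the identity on $U_q(\Glie)$, and that the coproduct $\Delta$ on $U_q(\Gaff)$ extends the coproduct on $U_q(\Glie)$. Consequently the $U_q(\Glie)$-module structure on $\otimes_{j=1}^s \BV(aq^{-2j})$ is exactly the one on $\BV^{\otimes s}$ inherited from Example \ref{example: natural representation quantum superalgebra}. Since $v_\kappa^{\otimes s}$ is a lowest weight vector in $\BV^{\otimes s}$ and Theorem \ref{thm: BKK Schur-Weyl duality} identifies $V_s^+$ as the unique simple $U_q(\Glie)$-submodule containing it, we obtain the inclusion
\begin{equation*}
V_s^+ \;=\; U_q(\Glie)\cdot v_\kappa^{\otimes s} \;\subseteq\; U_q(\Gaff)\cdot v_\kappa^{\otimes s} \;=\; V_{s,a}^+
\end{equation*}
as sub-$U_q(\Glie)$-modules of $\BV^{\otimes s}$.

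Next I would invoke Proposition \ref{prop: simplicity of fundamental modules}, asserting that $V_{s,a}^+$ is a simple $U_q(\Gaff)$-module. In parallel, $\ev_{aq^{-2s}}^*V_s^+$ is simple as well: any $U_q(\Gaff)$-submodule is \emph{a fortiori} a $U_q(\Glie)$-submodule, and $V_s^+$ is simple over $U_q(\Glie)$ by Theorem \ref{thm: BKK Schur-Weyl duality}. I would then compute the common eigenvalues of $s_{ii}(z)$ and $t_{ii}(z)$ on the vector $v_\kappa^{\otimes s}$ inside each of the two modules: on $V_{s,a}^+$, by applying the coproduct formula \eqref{for: coproduct for quantum affine superalgebra S} iteratively together with Example \ref{example: natural representation quantum superalgebra} and \eqref{equ: evaluation morphism} for each $\BV(aq^{-2j})$; on $\ev_{aq^{-2s}}^*V_s^+$, directly from \eqref{equ: evaluation morphism} combined with the known $U_q(\Glie)$-action on the lowest weight vector of $V_s^+$ given by Theorem \ref{thm: BKK Schur-Weyl duality}. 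In both cases the eigenvalues are explicit products of linear factors in $z$, and I expect the ratios of consecutive eigenvalues (which encode the $\ell$-weight relevant to the classification of $\S2.2$) to differ between the two modules only by an overall constant in each coordinate, precisely encoding a one-dimensional $U_q(\Gaff)$-module $D$.

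With this computation in hand, the classification at the end of $\S2.2$ (interpreted through the lowest $\ell$-weight after applying $\Psi$ of \eqref{equ: transposition in quantum affine superalgebra}, or equivalently after identifying highest $\ell$-weight vectors inside each simple module) yields the isomorphism $V_{s,a}^+\cong \ev_{aq^{-2s}}^*(V_s^+)\otimes D$ of $U_q(\Gaff)$-modules. This produces the dimension equality $\dim V_{s,a}^+=\dim V_s^+$, which combined with the inclusion from the first paragraph forces $V_{s,a}^+=V_s^+$ as subspaces of $\BV^{\otimes s}$. The main obstacle is the bookkeeping in the $\ell$-weight computation: the coproduct mixes loop modes nontrivially, so I must carefully track how the factors $\tfrac{q^{\pm 1}-zaq^{\mp 1}q^{-2j}}{1-zaq^{-2j}}$ accumulate across the $s$ tensor factors, but this reduces to Remark \ref{rem: highest l weight of evaluation modules} applied factor-by-factor and to a telescoping of the resulting rational functions.
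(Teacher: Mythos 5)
Your proposal follows essentially the same route as the paper's own proof: establish the containment $V_s^+ \subseteq V_{s,a}^+$, compute the eigenvalues of $s_{ii}(z)$ and $t_{ii}(z)$ on the common lowest $\ell$-weight vector $v_\kappa^{\otimes s}$ in both $\otimes_{j=1}^s\BV(aq^{-2j})$ and $\ev_{aq^{-2s}}^* V_s^+$, use simplicity of $V_{s,a}^+$ (Proposition \ref{prop: simplicity of fundamental modules}) together with the classification of \S \ref{subsec: classification} to manufacture $D$, and then upgrade the containment to equality by comparing dimensions.

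One imprecision worth flagging: you write that you expect the \emph{ratios} $s_{ii}(z)s_{i+1,i+1}(z)^{-1}$ (the $\Pi$-invariant) ``to differ between the two modules only by an overall constant in each coordinate, precisely encoding a one-dimensional $U_q(\Gaff)$-module $D$.'' That is not quite what happens, and if it did happen the argument would break: a one-dimensional module $\phi_{[f,g]}^*\BC$ acts by the \emph{same} $f(z)$ via every $s_{ii}(z)$ and the \emph{same} $g(z)$ via every $t_{ii}(z)$, so tensoring with it leaves the $\Pi$-invariant unchanged; a genuine constant discrepancy in $\Pi$ could therefore not be absorbed by $D$. What the computation actually shows is that the two sets of ratios are \emph{equal} (the telescoping you anticipate happens in the ratio $\prod_j\frac{1-zaq^{-2j}}{1-zaq^{-2j+2}}=\frac{1-zaq^{-2s}}{1-za}$), while the \emph{absolute} eigenvalues of $s_{ii}(z)$, $t_{ii}(z)$ differ by a common rational function (not a constant), and it is that common factor which $D$ absorbs. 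The paper makes this concrete by taking $D = \phi_{[f(z),g(z)]}^*\BC$ with $f(z)=\prod_{j=1}^{s-1}(1-zaq^{-2j})$ and $g(z)=\prod_{j=1}^{s-1}(1-z^{-1}a^{-1}q^{2j})$. This is a slip in your description of the expected outcome rather than a gap in the method; carrying out the eigenvalue bookkeeping you sketch would correct it. Otherwise your argument, including the observation that $\ev_{aq^{-2s}}^*V_s^+$ is simple over $U_q(\Gaff)$ because $V_s^+$ is simple over $U_q(\Glie)$, is sound and matches the paper.
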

\begin{proof}
It is enough to prove the first part, as $V_s^+ \subseteq V_{s,a}^+$. Let $f_i(z)$ be the eigenvalue of $s_{ii}(z)$ associated to the lowest $\ell$-weight vector $v_{\kappa}^{\otimes s}$ in $\otimes_{j=1}^s \BV(aq^{-2j})$. Then $f_i(z) = \prod_{j=1}^s (1-zaq^{-2j})$ for $i < \kappa$ and $f_{\kappa}(z) = \prod_{j=1}^s (q^{-1}-zaq^{-2j+1})$. Similar statements hold for $t_{ii}(z)$. Now set $D = \phi_{[f(z),g(z)]}^* \BC$ where $\BC$ is the one-dimensional trivial $U_q(\Gaff)$-module and $f(z) = \prod_{j=1}^{s-1}(1-zaq^{-2j}), g(z) = \prod_{j=1}^{s-1}(1-z^{-1}a^{-1}q^{2j})$. Then the lowest $\ell$-weight vector of $\ev_{aq^{-2s}}^*(V_s^+) \otimes D$ and $v_{\kappa}^{\otimes s}$ have the same eigenvalues of the $s_{ii}(z),t_{ii}(z)$. Since $V_{s,a}^+$ is a simple $U_q(\Gaff)$-module, it must be isomorphic to $\ev_{aq^{-2s}}^*(V_s^+) \otimes D$.
\end{proof}
Similar results as in the above two lemmas hold true for negative fundamental modules.
\begin{lem} \label{lem: negative fundamental module}
Let $a \in \BC^{\times}$ and $1\leq t \leq N$. Let $V_{t}^-$ be the sub-$U_q(\Glie)$-module of $\BW^{\otimes t}$ generated by $w_1^{\otimes t}$. Then as sub-$U_q(\Glie)$-modules $V_{t,a}^- = V_t^-$, as $U_q(\Gaff)$-modules $V_{t,a}^- \cong  \ev_{aq^2}^* (V_t^-) \otimes D$ for some one-dimensional module $D$, and $V_t^-$ has a highest weight vector
\begin{equation*}  
w^{(t)} = \sum_{\sigma \in \mathfrak{S}_t} (-q)^{l(\sigma)} w_{\kappa-t+\sigma(1)} \otimes w_{\kappa-t+\sigma(2)} \otimes \cdots \otimes w_{\kappa-t+\sigma(t)} \in \BW^{\otimes t}.
\end{equation*}
\end{lem}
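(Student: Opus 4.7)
The three assertions mirror Theorem~\ref{thm: BKK Schur-Weyl duality}, Lemma~\ref{lem: highest weight vector even fund}, and Lemma~\ref{lem: fundamental modules} on the negative side; my plan is to adapt each of those proofs, exploiting the isomorphism $\Psi$ of \eqref{equ: transposition in quantum affine superalgebra}, which realizes $\BW$ as $\Psi^*\BV$ and thereby turns the action of $U_q(\Glie)$ on $\BW$ into a sign-twisted mirror of its action on $\BV$. From $\Psi$ and Example~\ref{example: natural representation quantum superalgebra} one reads off $s_{ii}w_m = q_i^{-\delta_{im}}w_m$ and $s_{ij}w_m = -\varepsilon_{ji}(q_i-q_i^{-1})\delta_{im}w_j$ for $i<j$.

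First, I would verify that $w^{(t)}$ is a highest weight vector of $\BW^{\otimes t}$, paralleling Lemma~\ref{lem: highest weight vector even fund}. The weight of $w^{(t)}$ is $-\epsilon_{\kappa-t+1}-\cdots-\epsilon_\kappa$, and since every weight of $\BW^{\otimes t}$ is a sum of $-\epsilon_i$'s, $s_{jk}w^{(t)}$ can be non-zero only when $\kappa-t+1\le j<k\le\kappa$; the relations among the $s_{jk}$ used in the proof of Lemma~\ref{lem: highest weight vector even fund} further reduce to $k=j+1$. I would then split $\mathfrak{S}_t$ into $X\sqcup\theta X$ for the simple transposition $\theta = (j-\kappa+t,\,j-\kappa+t+1)$ and use $\Delta(s_{j,j+1}) = s_{jj}\otimes s_{j,j+1} + s_{j,j+1}\otimes s_{j+1,j+1}$ to verify term-by-term cancellation between paired summands. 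The principal bookkeeping difficulty I foresee is juggling the signs $\varepsilon_{ji}$ from $\Psi$ with the graded permutation signs in the coproduct; the base case $t=2$, $j=\kappa-1$, amounting to $s_{\kappa-1,\kappa}(w_{\kappa-1}\otimes w_\kappa - q\,w_\kappa\otimes w_{\kappa-1}) = 0$, is a convenient sanity check.

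Next, invoking the analog of Theorem~\ref{thm: BKK Schur-Weyl duality} for $\BW^{\otimes t}$ (obtained either by $\Psi^*$-pullback or by direct appeal to \cite{BKK}) shows that the summand isomorphic to $L(-\epsilon_{\kappa-t+1}-\cdots-\epsilon_\kappa)$ has one-dimensional weight spaces. Since $w^{(t)}$ is a highest weight vector of the required weight, the submodule it generates is this simple $L$; its $-t\epsilon_1$ weight space is spanned by $w_1^{\otimes t}$, hence $V_t^- = L(-\epsilon_{\kappa-t+1}-\cdots-\epsilon_\kappa)$ with $w^{(t)} \in V_t^-$. Finally, $V_t^-\subseteq V_{t,a}^-$ is immediate, and assuming Proposition~\ref{prop: simplicity of fundamental modules} (so that $V_{t,a}^-$ is simple), I would compute the eigenvalues of the $s_{ii}(z), t_{ii}(z)$ on the lowest $\ell$-weight vector $w_1^{\otimes t}$ of $\otimes_{j=1}^t \BW(aq^{2j})$ and select a one-dimensional $D = \phi_{[f(z),g(z)]}^*\BC$ via \eqref{equ: automorphism power series} so that its $\ell$-weight matches that of $\ev_{aq^2}^*(V_t^-)\otimes D$ (via Remark~\ref{rem: highest l weight of evaluation modules}). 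The $\ell$-weight classification then forces $V_{t,a}^-\cong \ev_{aq^2}^*(V_t^-)\otimes D$ as $U_q(\Gaff)$-modules, and a dimension count promotes the $U_q(\Glie)$-inclusion $V_t^-\subseteq V_{t,a}^-$ to an equality of sub-$U_q(\Glie)$-modules of $\BW^{\otimes t}$.
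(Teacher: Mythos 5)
Your approach takes a genuinely different route from the paper's. The paper proves this lemma by transporting the already-established positive statements (Lemmas \ref{lem: highest weight vector even fund}--\ref{lem: fundamental modules}) across the Hopf-superalgebra isomorphism $f\colon U_q(\Gafft)\to U_q(\Gaff)^{\mathrm{cop}}$ of \eqref{equ: from gl(M,N) to gl(N,M)}, which exchanges $M$ and $N$: it constructs a $U_q(\Gafft)$-module isomorphism $\vartheta_t\colon f^*V_{t,a}^-\to V_{t,aq^{2t+2}}'^+\otimes\BC_{\odd}^{\otimes t}$ (primed objects living over $\mathfrak{gl}(N,M)$) and then simply translates. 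Your plan instead re-proves the negative statements directly on the $\BW$-side, using the action formulas inherited from $\Psi$. This is feasible in outline; your formulas for $s_{ii}w_m$ and $s_{ij}w_m$ are correct and your base-case check is right. But it is longer, and you lose the explicit isomorphism $\vartheta_t$, which the paper reuses later (for instance in the proof of Corollary \ref{cor: simple fund modules two}).

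There is also one genuine slip. You claim the required BKK-type structure theorem for $\BW^{\otimes t}$ can be obtained ``by $\Psi^*$-pullback'' of Theorem \ref{thm: BKK Schur-Weyl duality}. Since $\Psi$ interchanges raising and lowering operators, $\Psi^*$ converts highest weight vectors into lowest weight vectors and sends the weight $\lambda$ to $-\lambda$. So $\Psi^*$ applied to Theorem \ref{thm: BKK Schur-Weyl duality} (which concerns the submodule of $\BV^{\otimes t}$ generated by the \emph{lowest} weight vector $v_\kappa^{\otimes t}$) yields a statement about the submodule of $\BW^{\otimes t}$ generated by $w_\kappa^{\otimes t}$, namely $L(-t\epsilon_\kappa)$ --- not about $V_t^-$, the submodule generated by $w_1^{\otimes t}$, which is $L(-\epsilon_{\kappa-t+1}-\cdots-\epsilon_\kappa)\cong L(t\epsilon_1)^*$. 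What you actually need, and what must be extracted separately from \cite{BKK}, is that $\BV^{\otimes t}$ is completely reducible and that the submodule generated by the \emph{highest} weight vector $v_1^{\otimes t}$ is $L(t\epsilon_1)$ with one-dimensional weight spaces (indexed by single-row semistandard tableaux); $\Psi^*$ of \emph{that} gives what you want. Your alternative ``direct appeal to \cite{BKK}'' is the right option, but it is a different special case of their results than the one recorded in Theorem \ref{thm: BKK Schur-Weyl duality}. A secondary caveat: you invoke Proposition \ref{prop: simplicity of fundamental modules} for $V_{t,a}^-$, but in the paper the negative case of that proposition is itself derived via the very $f$-trick your proof avoids; a fully $f$-free argument would also have to supply an independent proof of that simplicity (e.g.\ a direct $\BW$-analogue of the cyclicity fact used for $\BV$, combined with $\BW(a)^{\vee}\simeq\BW(a^{-1})$).
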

\begin{proof}
Let $v_i',\BV',\epsilon_i',\BP',V_{t,a}'^+$ be the corresponding objects for $\Glie'$ (so $1\leq t \leq N$). By comparing the highest $\ell$-weight vectors we get a $U_q(\Gafft)$-linear isomorphism $\theta: f^* \BW(a) \cong \BV'(a) \otimes \BC_{\odd}, w_i \mapsto x_i v_{\widehat{i}}' \otimes  [\odd]$ where $x_i \in \BC^{\times}$ for $i \in I$ and $\BC_{\odd} = \BC [\odd]$ is the one-dimensional odd module over $U_q(\Gafft)$. The graded permutation $\BV'(a) \otimes \BC_{\odd} \longrightarrow \BC_{\odd} \otimes \BV'(a)$, being $U_q(\Gafft)$-linear, induces an isomorphism of $U_q(\Gafft)$-modules 
$$\Sigma_t: \otimes_{j=t}^1 (\BV'(aq^{2j}) \otimes \BC_{\odd})\longrightarrow(\otimes_{j=t}^1 \BV'(aq^{2j})) \otimes \BC_{\odd}^{\otimes t}.$$ 
The assignment $\otimes_{j=1}^tu_j \mapsto (-1)^{\sum_{k<l}|u_k||u_l|} \otimes_{j=t}^1 u_j$ extends to a $U_q(\Gafft)$-linear isomorphism $\sigma_t: f^*(\otimes_{j=1}^t\BW(aq^{2j})) \longrightarrow \otimes_{j=t}^1 f^*\BW(aq^{2j})$ by Equation \eqref{equ: from gl(M,N) to gl(N,M)}. The composition 
\begin{displaymath}
 f^*(\otimes_{j=1}^t\BW(aq^{2j})) \xrightarrow{\sigma_t} \otimes_{j=t}^1 f^*\BW(aq^{2j}) \xrightarrow{\theta^{\otimes t}} \otimes_{j=t}^1 (\BV'(aq^{2j}) \otimes \BC_{\odd}) \xrightarrow{\Sigma_t} (\otimes_{j=t}^1 \BV'(aq^{2j})) \otimes \BC_{\odd}^{\otimes t}
\end{displaymath}
restricts to a $U_q(\Gafft)$-module isomorphism $\vartheta_t: f^*V_{t,a}^- \longrightarrow V_{t,aq^{2t+2}}'^+ \otimes \BC_{\odd}^{\otimes t}$. Lemmas \ref{lem: highest weight vector even fund}--\ref{lem: fundamental modules} for the $U_q(\Gafft)$-module $V_{t,a}'^+$ can be translated into those for the $U_q(\Gaff)$-module $V_{t,a}^-$ via $\vartheta_t$.
\end{proof}

By comparing the weights, we see that as $U_q(\Glie)$-modules, $V_t^- \cong L(t \epsilon_1)^*$. Here the dual space $V^* = \hom(V,\BC)$ of a $U_q(\Glie)$-module $V$ is endowed with the $U_q(\Glie)$-module structure:
\begin{displaymath}
\langle x l, v \rangle := (-1)^{|l||x|} \langle l, \Sm (x) v \rangle \quad \mathrm{for}\ x \in U_q(\Glie), l \in V^*, v \in V.  
\end{displaymath} 
In \cite{BKK}, $t\epsilon_1$ corresponds to the Young diagram with $t$ boxes in one row. Again consider the example $M=N=2=t$.
\begin{align*}
&M=N=2, V_2^- = L(2\epsilon_1)^*: \\
&\begin{Young}
1 & 1 \cr
\end{Young}^*, \quad \begin{Young}
1 & 2 \cr
\end{Young}^*, \quad \begin{Young}
1 & 3 \cr
\end{Young}^*, \quad \begin{Young}
1 & 4 \cr
\end{Young}^*, \quad \begin{Young}
2 & 2 \cr
\end{Young}^*, \quad \begin{Young}
2 & 3 \cr
\end{Young}^*, \quad \begin{Young}
2 & 4 \cr
\end{Young}^*, \quad \begin{Young}
3 & 4 \cr
\end{Young}^*.
\end{align*} 

\begin{example}   \label{example: tensor square}
$V_2^+$ is spanned by the $v_l^{\otimes 2}$ and $v_i \otimes v_j - (-1)^{|i||j|} q v_j \otimes v_i$ with $M< l\leq \kappa$ and $1\leq i < j \leq \kappa$; see \cite[\S 2]{Z2}. By Lemma \ref{lem: negative fundamental module}, $V_2^-$ is spanned by the $w_l^{\otimes 2}$ and $w_i \otimes w_j + (-1)^{|i||j|} q w_j \otimes w_i$ with $1\leq l\leq M$ and $1\leq i < j \leq \kappa$. In general, $V_s^+$ (resp. $V_t^-$) is seen as quantum exterior power $\bigwedge_q^s \BV$ (resp. symmetric power $S_q^t \BW$).
\end{example}
From Lemmas \ref{lem: fundamental modules}--\ref{lem: negative fundamental module} and Remark \ref{rem: highest l weight of evaluation modules}, we see that for $1\leq s \leq M$ and $1\leq t \leq N$,
$$\Pi(V_{s,a}^+) = (1^{s-1},q\frac{1-zaq^{-2s-2}}{1-zaq^{-2s}},1^{\kappa-1-s}),\quad \Pi(V_{t,a}^-) = (1^{\kappa-1-t},q^{-1}\frac{1-zaq^2}{1-za},1^{t-1}).$$
$\pm$ indicate the positive/negative powers of $q$ in $\Pi(V^{\pm}_{s,a})|_{z=0}$. Our definition of fundamental modules, viewed in terms of the highest $\ell$-weight classification, is then in accordance with that in the non-graded case \cite[Definition 3.4]{CH}; see also Footnote~\ref{footnote:Okado}. 
\section{$R$--matrices of fundamental representations}   \label{sec: spin r matrix}
The aim of this section is to construct $U_q(\Gaff)$-linear maps between fundamental modules. The following lemma is our starting point. Its proof, postponed to \S \ref{sec: simplicity} (page~\pageref{page:proof r-matrix}), is independent of denominators of normalized $R$-matrices; see the remark before Theorem \ref{thm: simplicity of tensor products}.
\begin{lem}   \label{lem: tensor product of odd even}
Let $V, W$ be two fundamental modules with highest $\ell$-weight vectors $v,w$ respectively. For $a,b \in \BC^{\times}$, denote $V_a := \Phi_a^* V$ and $W_b := \Phi_b^*W$. There exists a finite set $\Sigma \subset \BC^{\times}$ such that: if $\frac{a}{b} \notin \Sigma$, then $V_a \otimes W_b$ is a simple $U_q(\Gaff)$-module and there exists a unique $U_q(\Gaff)$-module isomorphism $V_a \otimes W_b \longrightarrow W_b \otimes V_a$ sending $v \otimes w$ to $w \otimes v$.
\end{lem}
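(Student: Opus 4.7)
The plan is to combine the highest $\ell$-weight classification of \S \ref{subsec: classification} with a rationality argument in the ratio $\zeta := a/b$.

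First, by Proposition \ref{prop: simplicity of fundamental modules} both $V_a$ and $W_b$ are finite-dimensional simple $U_q(\Gaff)$-modules. The coproduct formula \eqref{for: coproduct for quantum affine superalgebra S} shows that $v \otimes w \in V_a \otimes W_b$ and $w \otimes v \in W_b \otimes V_a$ are highest $\ell$-weight vectors, both of the same highest $\ell$-weight $\Pi(V_a)\Pi(W_b) = \Pi(W_b)\Pi(V_a)$ (multiplication in $\BC(z)^{\kappa-1}$ is commutative). Hence the submodules they generate admit the \emph{same} simple quotient $S$ up to isomorphism, which is independent of the ordering of the tensor factors.

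Next, to produce the intertwiner I would leverage the fusion construction of the rest of \S \ref{sec: spin r matrix}: by Definition \ref{def: fundamental representations} and Lemmas \ref{lem: fundamental modules}--\ref{lem: negative fundamental module}, the fundamental modules $V$ and $W$ are realized as submodules of tensor products $\otimes_j \BV(a q^{\bullet})$ and/or $\otimes_j \BW(b q^{\bullet})$, and the Perk--Schultz $R$-matrix \eqref{for: Perk-Schultz matrix coefficients} provides a rational family of intertwiners between such tensor products and their transpositions. Restricting to the fundamental submodules, normalizing so that $v \otimes w$ is sent to $w \otimes v$, and factoring out the scalar denominator yields a rational map $\mathcal{R}(\zeta): V_a \otimes W_b \to W_b \otimes V_a$, defined and nonzero for all $\zeta$ outside a finite set $\Sigma_0$.

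For the simplicity/isomorphism step I would use Weyl modules (\S \ref{sec: Weyl modules}): the cyclic submodule of $V_a \otimes W_b$ generated by $v \otimes w$ is a quotient of the Weyl module of highest $\ell$-weight $\Pi(V_a)\Pi(W_b)$, and a dimension comparison (the Weyl module's dimension bounds $\dim V \cdot \dim W$ from above, while the total dimension of $V_a \otimes W_b$ realizes this bound) forces the cyclic submodule to equal the whole tensor product at generic $\zeta$. Applying the same argument to $W_b \otimes V_a$, the rational intertwiner $\mathcal{R}(\zeta)$ becomes an isomorphism away from a finite set $\Sigma_1 \supseteq \Sigma_0$. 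Simplicity then follows: any nonzero submodule of $V_a \otimes W_b$, being finite-dimensional, contains a highest $\ell$-weight vector, which by uniqueness in the top weight space is a scalar multiple of $v \otimes w$, hence coincides with the whole module. Uniqueness of the intertwiner with the specified normalization is Schur's lemma on the one-dimensional top weight space.

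I expect the main obstacle to be this genericity step for simplicity: controlling, outside a finite exceptional set $\Sigma$, that $V_a \otimes W_b$ is simple rather than merely having $S$ as a subquotient. The Weyl-module dimension bound is the cleanest input, but it in turn depends on Proposition \ref{prop: simplicity of fundamental modules} and on the properties of Weyl modules developed in \S \ref{sec: Weyl modules}. This mutual dependence — Lemma \ref{lem: tensor product of odd even}, Proposition \ref{prop: simplicity of fundamental modules}, and the Weyl-module machinery being proved as a package — is presumably why the author defers the proof to \S \ref{sec: simplicity}.
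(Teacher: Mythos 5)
Your proposal has several genuine gaps and, in one key step, is circular with respect to the logical structure of the paper.

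\textbf{Circularity in the intertwiner construction.} You propose to build the map $V_a \otimes W_b \to W_b \otimes V_a$ by restricting the fused Perk--Schultz $R$-matrix to the fundamental submodules inside $\otimes_j \BV(\cdot)$, $\otimes_j \BW(\cdot)$. But the fact that this fused $R$-matrix \emph{preserves} the submodules $V_{s,a}^{\pm}\otimes V_{t,b}^{\pm}$ is precisely the content of Lemma \ref{lem: normalized r matrix even odd}, whose proof invokes Lemma \ref{lem: tensor product of odd even} itself (it introduces the finite set $\Sigma$ from there and exploits the simplicity of $V_{s,a}^+\otimes V_{t,b}^+$ away from $\Sigma$). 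The paper is explicit about this: its proof of Lemma \ref{lem: tensor product of odd even} in \S\ref{sec: simplicity} is declared independent of \S\S\ref{sec: spin r matrix}--\ref{sec: even odd}, exactly so that the $R$-matrix construction can quote it without circularity. The actual existence-and-uniqueness of the intertwiner is obtained the other way around: once $V_a \otimes W_b$ is shown to be \emph{simple} with the same highest $\ell$-weight as $W_b \otimes V_a$, the isomorphism and its normalization are automatic from the classification in \S\ref{subsec: classification}, with no $R$-matrix needed.

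\textbf{The Weyl-module dimension bound is not available.} You invoke ``the Weyl module's dimension bounds $\dim V \cdot \dim W$ from above, while the total dimension of $V_a \otimes W_b$ realizes this bound.'' The paper develops Weyl modules only for $U_q(\widehat{\mathfrak{gl}(1,1)})$ (\S\ref{sec: Weyl modules}); for general $\mathfrak{gl}(M,N)$ no dimension formula is established or cited, and the author remarks that going beyond $\mathfrak{gl}(1,1)$ is left open. So this step is unsupported as stated. What the paper actually does is reduce to $\mathfrak{gl}(1,1)$ via Lemmas \ref{lem: crucial even fundamental}--\ref{lem: reduction Weyl gl(1,1)}, where Weyl modules are controllable (Lemma \ref{lem: Weyl modules gl(1,1)} and Proposition \ref{prop: simple vs Weyl gl(1,1)}), and uses that, together with Lemma \ref{lem: cyclicity Chari}, to prove cyclicity (Corollaries \ref{cor: tensor product even fundamental}--\ref{cor: mixed fundamental modules}).

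\textbf{The simplicity argument has a logical gap.} You claim any nonzero submodule contains a highest $\ell$-weight vector which ``by uniqueness in the top weight space is a scalar multiple of $v \otimes w$.'' But a highest $\ell$-weight vector in a proper submodule can sit in a strictly lower weight space; uniqueness of the top weight vector does not preclude this. Cyclic generation by $v\otimes w$ (highest $\ell$-weight) alone does not yield simplicity. The paper closes this gap with the twisted dual: it shows $(V_a \otimes W_b)^{\vee} \cong V_a^{\vee} \otimes W_b^{\vee}$ is again a tensor of fundamental modules (Lemma \ref{lem: twisted dual of fundamental modules}), hence also of highest $\ell$-weight for generic $a/b$; being of highest $\ell$-weight with dual of highest $\ell$-weight forces simplicity. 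Your proof proposal does not use twisted duals at all, and this is the missing mechanism.

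In short: the correct route, and the one the paper takes, goes through the $\mathfrak{gl}(1,1)$-reduction and Weyl-module arguments of \S\S\ref{sec: Weyl modules}--\ref{sec: cyclicity} to get cyclicity for generic ratio, then the twisted-dual formulae of \S\ref{sec: simplicity} to upgrade to simplicity; the intertwiner is then forced by the highest $\ell$-weight classification, rather than built from Perk--Schultz $R$-matrices.
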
  
\begin{rem}
Presumably the quantum affine superalgebra $U_q(\Gaff)$ admits a universal $R$-matrix $\mathcal{R}(z) \in U_q^+(\Gaff) \widehat{\otimes} U_q^-(\Gaff)[[z]]$. Here $U_q^{+}(\Gaff)$ (resp. $U_q^-(\Gaff)$) is the subalgebra generated by the $(s_{ii}^{(0)})^{\pm 1}, s_{ij}^{(n)}$ (resp. the $(t_{ii}^{(0)})^{-1}, t_{ij}^{(n)}$), and $\widehat{\otimes}$ is a completed tensor product arising from the weight grading. The $U_q(\Gaff)$-module isomorphism in the above lemma can then be thought of as (up to a scalar factor which is a meromorphic function in $\frac{a}{b}$) the specialization $c_{V,W} \mathcal{R}(\frac{a}{b})|_{V,W}$. In \cite[\S 3.3.6]{Z2} a Hopf pairing $\widehat{\varphi}$ between these two subalgebras was constructed. The author believes that $\widehat{\varphi}$ is non-degenerate and its Casimir element gives $\mathcal{R}(z)$. See \cite{Z4} for a proof in the case $\mathfrak{gl}(1,1)$.
\end{rem}

The following result is taken from \cite[Lemma 4.6]{Z2}.
\begin{lem}    \label{lem: ++ or -- matrix}
For $a, b \in \BC^{\times}$, $c_{\BV,\BV} R(a,b): \BV(a) \otimes \BV(b) \longrightarrow \BV(b) \otimes \BV(a)$ is $U_q(\Gaff)$-linear. 
\end{lem}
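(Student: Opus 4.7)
My approach is to reduce the claim to the quantum Yang--Baxter equation for the Perk--Schultz matrix, which was recalled in the preliminaries. Write $P := c_{\BV,\BV} R(a,b)$. Since $U_q(\Gaff)$ is generated as a superalgebra by the coefficients of $T(z)$ and $S(z)$, it suffices to check that $P$ intertwines the action of these generating matrices. Using the RTT-form of the coproduct \eqref{for: coproduct for quantum affine superalgebra S}, the action of $T(z)$ on $\BV(a)\otimes\BV(b)$ is encoded by
\[
 (\rho_{(1)}\ev_a \otimes \rho_{(1)}\ev_b \otimes \Id)\bigl((\Delta\otimes\Id) T(z)\bigr) \;=\; L_a(z)_{13}\,L_b(z)_{23},
\]
where $L_c(z) := (\rho_{(1)}\ev_c\otimes\Id)(T(z)) \in \End(\BV)\otimes\End(\BV)[[z^{-1}]]$, the product on the right being taken in the graded algebra (this is precisely how the signs $\epsilon_{ijk}$ in \eqref{for: coproduct for quantum affine superalgebra S} arise). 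The same procedure applied to $\BV(b)\otimes\BV(a)$ yields $L_b(z)_{13}\,L_a(z)_{23}$.

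Next, a direct computation using the formulas of $\rho_{(1)}$ in Example \ref{example: natural representation quantum superalgebra} and of $\ev_c$ in \eqref{equ: evaluation morphism} expresses $L_c(z)$ explicitly as a rational matrix that, up to an overall scalar and an appropriate identification of tensor factors handled by the graded flip, coincides with an evaluation of the Perk--Schultz $R$-matrix at the pair of spectral parameters $(z,c)$. In other words, the evaluation representation $\BV(c)$ is essentially defined by the $R$-matrix itself -- this is the standard observation underlying the RTT realization -- so $L_a(z)_{13}$ may be substituted by $R_{13}(a,z)$ and $L_b(z)_{23}$ by $R_{23}(b,z)$ in position 3 viewed as the auxiliary space.

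With these identifications, the $U_q(\Gaff)$-linearity of $P$ reduces, after peeling off the graded flip in positions $1,2$, to the quantum Yang--Baxter equation recalled in \S\ref{subsec: quantum superalgebra},
\[
 R_{12}(a,b)\,R_{13}(a,z)\,R_{23}(b,z) \;=\; R_{23}(b,z)\,R_{13}(a,z)\,R_{12}(a,b),
\]
specialized at $(z_1,z_2,z_3)=(a,b,z)$. The parallel argument with $S(z)$ in place of $T(z)$, using the second RTT relation in Definition \ref{def: quantum affine superalgebras}, handles the other family of generators, and the two together give $U_q(\Gaff)$-linearity.

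\textbf{Main obstacle.} The principal difficulty is bookkeeping of the super signs: the Yang--Baxter equation as stated is a plain matrix identity in $\End(\BV^{\otimes 3})$, whereas the intertwining property we need involves the graded flip $c_{\BV,\BV}$ and the graded coproduct. Converting between these two presentations cleanly requires pinning down the exact relation between $L_c(z)$ and $R(z,c)$ (in particular, keeping track of which tensor factor plays the role of auxiliary versus module space, since a naive reading gives entries like $E_{ji}\otimes E_{ji}$ in $L_c(z)$ as opposed to $E_{ji}\otimes E_{ij}$ in $R$, differing by a partial transposition that is absorbed by $c_{\BV,\BV}$). Once the conventions are matched, the remainder of the proof is formal.
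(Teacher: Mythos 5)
The paper gives no proof of this lemma; it is imported from the author's earlier work, so I can only evaluate your proposal on its own terms. The reduction-to-Yang--Baxter strategy is a natural first thought, but the step on which everything hinges is false: $L_c(z):=(\rho_{(1)}\ev_c\otimes\Id)(T(z))$ is \emph{not} the Perk--Schultz matrix $R(\cdot,\cdot)$, up to scalar or any ``identification of tensor factors handled by the graded flip.''

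Computing $L_c(z)$ from Example~\ref{example: natural representation quantum superalgebra} and \eqref{equ: evaluation morphism}, one finds that, apart from the diagonal and unit parts, $L_c(z)$ is built from summands proportional to $E_{ij}\otimes E_{ij}$ (first slot the module, second slot auxiliary). The Perk--Schultz matrix \eqref{for: Perk-Schultz matrix coefficients} instead contains $E_{ji}\otimes E_{ij}$ and $E_{ij}\otimes E_{ji}$. These are genuinely different operators: $E_{ji}\otimes E_{ij}$ preserves the total $\BP$-weight on $\BV^{\otimes 2}$ ($v_i\otimes v_j\mapsto v_j\otimes v_i$), while $E_{ij}\otimes E_{ij}$ does not ($v_j\otimes v_j\mapsto v_i\otimes v_i$). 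So $L_c(z)$ is related to $R$ by a partial (super)transposition on the module slot, \emph{not} by the graded flip $c_{\BV,\BV}$. You flag this discrepancy as the ``main obstacle'' but dismiss it with the assertion that the transposition is absorbed by $c_{\BV,\BV}$; that assertion does not hold: $c_{\BV,\BV}$ is inserted between positions $1$ and $2$, whereas the transposition lives inside each $L$-operator on its own module slot, and the two operations are unrelated.

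The gap is not cosmetic. The identity you actually need is
\[
R_{12}(a,b)\,L_a(z)_{13}\,L_b(z)_{23}\;=\;L_b(z)_{23}\,L_a(z)_{13}\,R_{12}(a,b),
\]
which, after substituting $L_a(z)\doteq R(1,za)^{t_1}$ and $L_b(z)\doteq R(1,zb)^{t_2}$ (partial transpose in the indicated slot), is \emph{not} the Yang--Baxter equation. Partial transposition in one tensor factor is an anti-homomorphism in that factor alone; applying $t_1$ to $R_{12}R_{13}R_{23}=R_{23}R_{13}R_{12}$ reverses the order of the two factors touching slot $1$ and produces $R_{13}^{t_1}R_{12}^{t_1}R_{23}=R_{23}R_{12}^{t_1}R_{13}^{t_1}$, which has $R_{12}^{t_1}$ rather than $R_{12}$ and carries no $t_2$ at all. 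The braid identity above is true (a short direct check for $M=2,N=0$ confirms it), but it does not reduce to the YBE under your substitution. To make the argument rigorous you would either have to verify the intertwining directly on the RTT generators (as the cited reference presumably does), or derive the correct ``$RLL$''-type braid identity for the mixed pair $(R,L)$ from the defining RTT relations and the homogeneity of $R$, rather than appealing to the YBE with $L$ replaced by $R$.
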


Let $F: \BV(a) \otimes \BW(b) \longrightarrow \BW(b) \otimes \BV(a)$ be a $U_q(\Gaff)$-linear map sending $v_1 \otimes w_{\kappa}$ to $w_{\kappa} \otimes v_1$; by Lemma \ref{lem: tensor product of odd even}, $F$ exists when $\frac{a}{b}$ is generic. We shall compute the $F(v_i \otimes w_j)$. 

\noindent \underline{Step I.} For $k \neq l$, $\BC v_k \otimes w_l$ is the weight space of $\BV(a)\otimes \BW(b)$ of weight $\epsilon_k - \epsilon_l$. The zero weight space is spanned by the $v_i \otimes w_i$. Similar statements hold for $\BW(b) \otimes \BV(a)$. Since $F$ respects the weight spaces, there exist $\lambda_{ij},\theta_{kl}$ for $i,j,k,l \in I$ and $k \neq l$ such that $\theta_{1\kappa } = 1$,
\begin{displaymath}
F(v_j \otimes w_j) = \sum_{i\in I} \lambda_{ij} w_i \otimes v_i, \quad F(v_k \otimes w_l) = \theta_{kl} w_l \otimes v_k \quad \mathrm{for}\ k \neq l.
\end{displaymath}

\noindent \underline{Step II.} Let $i,j,k \in I$ be such that $i < j < k$. Compare $\theta_{ij}$ with $\theta_{ik}$. We have
\begin{displaymath}
(2.a)\quad F(s_{jk} (v_i \otimes w_j)) = s_{jk} F(v_i \otimes w_j) = \theta_{ij} s_{jk} (w_j \otimes v_i).
\end{displaymath}
By Equation \eqref{for: coproduct for quantum affine superalgebra S} and Examples \ref{example: natural representation quantum superalgebra}--\ref{example: negative natural representation}:
\begin{align*}
s_{jk}(v_i\otimes w_j) &= \sum_{l=j}^k (s_{jl} \otimes s_{lk})(v_i \otimes w_j) = (s_{jj} \otimes s_{jk}) (v_i \otimes w_j) = (-1)^{|i|(|j|+|k|)} s_{jj}v_i \otimes s_{jk}w_j  \\
&= v_i \otimes s_{jk}w_j = (q_k^{-1}-q_k) v_i \otimes w_k,  \\
s_{jk}(w_j\otimes v_i) &= \sum_{l=j}^k (s_{jl} \otimes s_{lk})(w_j \otimes v_i) = (s_{jk}\otimes s_{kk})(w_j \otimes v_i) = (q_k^{-1}-q_k)w_k \otimes v_i.
\end{align*}
It follows from (2.a) that $\theta_{ij} = \theta_{ik}$. Next compare $\theta_{ik}$ and $\theta_{jk}$ by using
\begin{align*}
t_{ji}(v_i\otimes w_k) &= (t_{ji}\otimes t_{ii})(v_i\otimes w_k) = (q_i^{-1}-q_i)v_j\otimes w_k, \\
t_{ji}(w_k\otimes v_i) &= (t_{jj}\otimes t_{ji})(w_k\otimes v_i) = (-1)^{|i|+|j|} (q_i^{-1}-q_i) w_k\otimes v_j.
\end{align*}
Applying $Ft_{ji}$ and $t_{ji} F$ to $v_i\otimes w_k$ we get $\theta_{ik} = (-1)^{|i|+|j|}\theta_{jk}$. Now compute
\begin{align*}
t_{kj}(v_j\otimes w_i) &= (t_{kj}\otimes t_{jj})(v_j\otimes w_i) = (q_j^{-1}-q_j)v_k \otimes w_i, \\
t_{kj}(w_i\otimes v_j) &= (t_{kk}\otimes t_{kj})(w_i\otimes v_j) = (q_j^{-1}-q_j)w_i\otimes v_k.
\end{align*}
Applying $F t_{kj}$ and $t_{kj}F$ to $v_j\otimes w_i$ we get $\theta_{ji} = \theta_{ki}$. At last consider
\begin{align*}
s_{ij}(v_k\otimes w_i) &= (s_{ii}\otimes s_{ij})(v_k\otimes w_i) = (-1)^{|i|+|j|}(q_j^{-1}-q_j) v_k \otimes w_j, \\
s_{ij}(w_i\otimes v_k) &= (s_{ij}\otimes s_{jj})(w_i\otimes v_k) = (q_j^{-1}-q_j) w_j \otimes v_k.
\end{align*}
Applying $Fs_{ij}$ and $s_{ij}F$ to $v_k\otimes w_i$ we get $\theta_{ki} = (-1)^{|i|+|j|}\theta_{kj}$.

\noindent \underline{Step III.} We assume that $j < k$. Let us compare $\theta_{jk}$ and $\theta_{kj}$. Compute
\begin{align*}
s_{jk}(v_k \otimes w_j) &= (s_{jj}\otimes s_{jk} + s_{jk}\otimes s_{kk})(v_k\otimes w_j) = (q_j-q_j^{-1})(v_j\otimes w_j-v_k\otimes w_k), \\
t_{kj}(v_j \otimes w_k) &= (t_{kk}\otimes t_{kj} + t_{kj}\otimes t_{jj})(v_j \otimes w_k) = (q_j-q_j^{-1})(v_j \otimes w_j - v_k \otimes w_k).
\end{align*}
By applying $F$ to the above identities, we get $\theta_{kj} s_{jk} (w_j \otimes v_k) = \theta_{jk} t_{kj}(w_k\otimes v_j)$. On the other hand, a straightforward calculation indicates that
\begin{align*}
s_{jk} (w_j \otimes v_k) &=  t_{kj}(w_k\otimes v_j) \\
&= (q_j-q_j^{-1})(q_j^{-1}w_j\otimes v_j + \sum_{j<l<k}(q_j^{-1}-q_j)w_l\otimes v_l - (-1)^{|j|+|k|} q_k w_k\otimes v_k), 
\end{align*}
It follows that $\theta_{jk} = \theta_{kj}$ and  $F(v_j\otimes w_j-v_k\otimes w_k) = \theta_{jk} (q_j-q_j^{-1})^{-1} s_{jk} (w_j \otimes v_k)$. In conclusion, for all $i,j \in I$, we have $\theta_{ij} = \theta_{1\kappa}(-1)^{|i||j|} = (-1)^{|i||j|}$.

\noindent \underline{Step IV.} Assume that $j < k$. Let us apply $F s_{jk}$ and $s_{jk}F$ to $v_k \otimes w_k$. We have
\begin{align*}
s_{jk}(v_k\otimes w_k) &= (s_{jk}\otimes s_{kk})(v_k\otimes w_k) = q_k^{-1}(q_j-q_j^{-1})v_j \otimes w_k.
\end{align*}
So $Fs_{jk}(v_k\otimes w_k) = \theta_{jk}q_k^{-1}(q_j-q_j^{-1})w_k\otimes v_j$. If $i \neq j,k$, then $s_{jk}(w_i\otimes v_i) = 0$. Otherwise,
\begin{align*}
s_{jk}(w_j\otimes v_j) &= (s_{jk}\otimes s_{kk})(w_j \otimes v_j) = (q_k^{-1}-q_k)w_k \otimes v_j, \\
s_{jk}(w_k\otimes v_k) &= (s_{jj}\otimes s_{jk})(w_k \otimes v_k) = (-1)^{|j|+|k|}(q_j-q_j^{-1}) w_k \otimes v_j.
\end{align*}
It follows that $\lambda_{kk}-\lambda_{jk} = \theta_{jk} (-1)^{|j|+|k|}q_k^{-1}$.

\noindent \underline{Step V.} Let us apply $F s_{\kappa 1}(z)$ and $s_{\kappa 1}(z)F$ to $v_1 \otimes w_{\kappa}$ by developing
\begin{align*}
s_{\kappa 1}(z) (v_1\otimes w_{\kappa}) &= (s_{{\kappa}1}(z) \otimes s_{11}(z) + s_{\kappa \kappa}(z) \otimes s_{\kappa 1}(z))(v_1 \otimes w_{\kappa}) \\
&= (q-q^{-1})za (1-zb) v_{\kappa}\otimes w_{\kappa} + (1-za)(q^{-1}-q)zb v_1 \otimes w_1, \\
s_{\kappa 1}(z)(w_{\kappa} \otimes v_1) &= \sum_l (s_{\kappa l}(z)\otimes s_{l1}(z))(w_{\kappa} \otimes v_1) \\
&= (q^{-1}-q)zb(q-zaq^{-1})w_1\otimes v_1 - (q-zbq^{-1})(q-q^{-1})za w_{\kappa} \otimes v_{\kappa} \\
&\quad + \sum_{1<l<\kappa} (-1)^{|l|} (q_l^{-1}-q_l) zb (q-q^{-1})za w_l\otimes v_l. 
\end{align*}
From the identity $Fs_{\kappa 1}(z)(v_1\otimes w_{\kappa}) = s_{\kappa 1}(z)(w_{\kappa}\otimes v_1)$ we deduce that:
\begin{align*}
F(v_1\otimes w_1) &= \frac{bq-aq^{-1}}{b-a}w_1\otimes v_1 + \frac{a(q-q^{-1})}{b-a}\sum_{l>1} w_l\otimes v_l, \\
F(v_{\kappa}\otimes w_{\kappa}) & = \frac{b(q-q^{-1})}{b-a}\sum_{l<\kappa} w_l \otimes v_l - \frac{bq^{-1}-aq}{b-a} w_{\kappa} \otimes v_{\kappa}.
\end{align*}
By using the identity for $F(v_i \otimes w_i - v_{\kappa} \otimes w_{\kappa})$ in Step III, we obtain that for $i \in I$,
\begin{align*}
&F(v_i\otimes w_i) = \sum_{l<i}\frac{b(q-q^{-1})}{b-a}w_l\otimes v_l + (-1)^{|i|}\frac{bq_i-aq_i^{-1}}{b-a}w_i\otimes v_i + \sum_{l>i}\frac{a(q-q^{-1})}{b-a}w_l\otimes v_l.
\end{align*}

Now let us introduce the matrix $R^{+-}_{a,b} := c_{\BW,\BV} F \in \End(\BV\otimes \BW)$:
\begin{eqnarray}  \label{for: positive-negative R matrix}
\begin{array}{rcl}
R_{a,b}^{+-} &=& \sum\limits_{i\neq j} E_{ii} \otimes E_{jj} + \sum\limits_{i\in I}\frac{bq_i - aq_i^{-1}}{b-a} E_{ii} \otimes E_{ii}   \\
&\ & +  \sum\limits_{i<j} \frac{a(q_j-q_j^{-1})}{b-a} E_{ji} \otimes E_{ji} +  \sum\limits_{i>j}\frac{b(q_i-q_i^{-1})}{b-a}  E_{ji} \otimes E_{ji}.
\end{array}  
\end{eqnarray}
Here by abuse of language $E_{ij}$ is also in $\End(\BW)$ sending $w_k$ to $\delta_{jk} w_i$.
\begin{lem} \label{lem: +- matrix}
$c_{\BV,\BW}R_{a,b}^{+-}: \BV(a) \otimes \BW(b) \longrightarrow \BW(b) \otimes \BV(a)$ is a morphism of $U_q(\Gaff)$-modules provided that $a,b \in \BC^{\times}$ and $a \neq b$. 
\end{lem}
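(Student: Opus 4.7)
The plan is to combine Lemma \ref{lem: tensor product of odd even} with the explicit matrix computation already carried out in Steps I--V, and then to remove the genericity hypothesis on $a/b$ by a Zariski-density argument.

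First, since $\BV(a) \cong V_{1,aq^{2}}^{+}$ and $\BW(b) \cong V_{1,bq^{-2}}^{-}$ are fundamental modules, I would apply Lemma \ref{lem: tensor product of odd even} to obtain a finite set $\Sigma \subset \BC^{\times}$ such that, whenever $a/b \notin \Sigma$, there exists a unique $U_q(\Gaff)$-linear isomorphism $F \colon \BV(a) \otimes \BW(b) \to \BW(b) \otimes \BV(a)$ with $F(v_1 \otimes w_\kappa) = w_\kappa \otimes v_1$. Steps I--V above are precisely the derivation of the matrix of such an $F$ under these hypotheses; they force $F = c_{\BV,\BW} R^{+-}_{a,b}$. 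Hence for every pair $(a,b)$ with $a/b \notin \Sigma$ the operator $c_{\BV,\BW} R^{+-}_{a,b}$ is $U_q(\Gaff)$-linear.

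Next, I would extend this conclusion to the whole locus $\{a \neq b\}$ by a rationality argument. By the evaluation homomorphism \eqref{equ: evaluation morphism} and the coproduct \eqref{for: coproduct for quantum affine superalgebra S}, every RTT generator $s_{ij}^{(n)}, t_{ij}^{(n)}$ acts on $\BV(a) \otimes \BW(b)$ and on $\BW(b) \otimes \BV(a)$ by a matrix whose entries are Laurent polynomials in $a$ and $b$. The matrix coefficients of $c_{\BV,\BW} R^{+-}_{a,b}$ read off from \eqref{for: positive-negative R matrix} lie in $\BC[a^{\pm1}, b^{\pm1}, (b-a)^{-1}]$. Hence, for any generator $x$ of $U_q(\Gaff)$ and any basis vector $v_i \otimes w_j$, the intertwining identity
$$
  x \cdot \bigl( c_{\BV,\BW} R^{+-}_{a,b}(v_i \otimes w_j) \bigr) \;=\; c_{\BV,\BW} R^{+-}_{a,b}\bigl( x \cdot (v_i \otimes w_j) \bigr)
$$
is a rational identity in $(a,b)$ that is regular on $\{a \neq b\}$ and holds on the Zariski-dense subset $\{a \neq b,\ a/b \notin \Sigma\}$. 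It therefore holds throughout $\{a \neq b\}$, which is the claim.

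The only conceptual point to watch is that Lemma \ref{lem: tensor product of odd even} excludes a finite but a priori unspecified set of ratios, whereas the statement to be proved excludes only the hypersurface $a = b$. This gap is closed precisely because $b-a$ is the sole denominator appearing in \eqref{for: positive-negative R matrix}; once this observation is in place, no generator-by-generator verification of the RTT relations is needed, and there is no further obstacle.
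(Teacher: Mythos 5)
Your proof is correct and follows essentially the same route as the paper's. Both use Lemma~\ref{lem: tensor product of odd even} (via the identification of $\BV(a)$, $\BW(b)$ with the rank-one fundamental modules) to obtain the $U_q(\Gaff)$-linear map $F$ at generic $a/b$, identify $F$ with $c_{\BV,\BW}R^{+-}_{a,b}$ through the Step I--V computation, and then propagate the intertwining identity to all $a\neq b$; the paper clears the sole denominator $b-a$ and argues by polynomiality, which is the same density argument you phrase in Zariski terms on the open locus $\{a\neq b\}$.
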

\begin{proof}
Let $\pi_1,\pi_2$ denote the representations of $U_q(\Gaff)$ on $\BV(a)\otimes \BW(b)$ and $\BW(b)\otimes \BV(a)$ respectively. We need to show that for $x$ an arbitrary RTT generator of $U_q(\Gaff)$,
$$(*)_x:\quad (a-b)c_{\BV,\BW}R_{a,b}^{+-} \pi_1(x) = (a-b)\pi_2(x) c_{\BV,\BW}R_{a,b}^{+-}. $$
By Examples \ref{example: natural representation quantum superalgebra}--\ref{example: negative natural representation} and Equation \eqref{for: coproduct for quantum affine superalgebra S}, $\pi_1(x), \pi_2(x)$ are polynomials in $a,b$. Combining Equation \eqref{for: positive-negative R matrix}, we see that $(*)_x$ is a polynomial equation in $a,b$. Lemma \ref{lem: ++ or -- matrix} and the above explicit computation of $F$ prove $(*)_x$ when $\frac{a}{b}$ is in the complementary of a finite subset of $\BC^{\times}$.  By polynomiality $(*)_x$ is true for all $a,b\in \BC^{\times}$.
\end{proof}

Let us define two classes of fusion $R$-matrices: $s,t \in \BZ_{>0}$, 
\begin{align}
R^{s|t}_{a,b} &:= \prod_{j=t}^1 \prod_{i=1}^s (R^{+-}_{aq^{-2i},bq^{2j}})_{i,s+j} \in \End(\BV^{\otimes s} \otimes \BW^{\otimes t}),  \label{equ: fusion even odd}  \\
R^{s,t}_{a,b} &:= \prod_{j=t}^1 \prod_{i=1}^s R(aq^{-2i},bq^{-2j})_{i,s+j} = \prod_{i=1}^s \prod_{j=t}^1 R(aq^{-2i},bq^{-2j})_{i,s+j}  \in \End(\BV^{\otimes s+t}).  \label{equ: fusion even even}
\end{align}
The last equation holds by definition of the tensor subscripts in \S \ref{subsec: quantum superalgebra}.
\begin{lem}  \label{lem: normalized r matrix even odd}
Let $1 \leq s \leq M$ and $t \in \BZ_{>0}$. 
\begin{itemize}
\item[(A)] Suppose $t \leq M$. Then for all $a,b \in \BC^{\times}$, the linear map 
\begin{displaymath}
c_{\BV^{\otimes s}, \BV^{\otimes t}} R^{s,t}_{a,b}: (\otimes_{i=1}^s \BV(aq^{-2i})) \otimes (\otimes_{j=1}^t \BV(bq^{-2j})) \longrightarrow (\otimes_{j=1}^t \BV(bq^{-2j})) \otimes (\otimes_{i=1}^s \BV(aq^{-2i}))
\end{displaymath}
restricts to a $U_q(\Gaff)$-module map $V_{s,a}^+ \otimes V_{t,b}^+ \longrightarrow V_{t,b}^+ \otimes V_{s,a}^+$. In particular, there exist $X, Y \in \BC[a,b]$ such that $R_{a,b}^{s,t}(v^{(s)} \otimes v^{(t)}) = X v^{(s)} \otimes v^{(t)}, R_{a,b}^{s,t}(v_{\kappa}^{\otimes s+t}) = Y v_{\kappa}^{\otimes s+t}$.
\item[(B)] Suppose $t \leq N$. Let $a,b \in \BC^{\times}$ be such that $aq^{-2i} \neq bq^{2j}$ for all $1 \leq i \leq s$ and $1 \leq j \leq t$. Then the linear map
\begin{displaymath}
c_{\BV^{\otimes s}, \BW^{\otimes t}} R^{s|t}_{a,b}: (\otimes_{i=1}^s \BV(aq^{-2i})) \otimes (\otimes_{j=1}^t \BW(bq^{2j})) \longrightarrow (\otimes_{j=1}^t \BW(bq^{2j})) \otimes (\otimes_{i=1}^s \BV(aq^{-2i}))
\end{displaymath}
restricts to a $U_q(\Gaff)$-module map $V_{s,a}^+ \otimes V_{t,b}^- \longrightarrow V_{t,b}^- \otimes V_{s,a}^+$. Furthermore we have $R_{a,b}^{s|t}(v^{(s)} \otimes w^{(t)}) = v^{(s)} \otimes w^{(t)}, R_{a,b}^{s|t}(v_{\kappa}^{\otimes s} \otimes w_1^{\otimes t}) = v_{\kappa}^{\otimes s} \otimes w_1^{\otimes t}$.
\end{itemize}
\end{lem}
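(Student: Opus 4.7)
The proofs of (A) and (B) share a common structure. Write $\phi$ for the map in question. The first task is to show $\phi$ is $U_q(\Gaff)$-linear between the source and target big tensor products. Decompose the graded permutation $c$ as a product of adjacent transpositions $c_{\BV,\BV}$ (resp.\ $c_{\BV,\BW}$) and regroup with the individual $R$-matrix factors using the Yang--Baxter equation---for $R$ in (A) and for the mixed pair $R, R^{+-}$ in (B). This rewrites $\phi$ as a composition of elementary adjacent intertwiners $c_{\BV,\BV}R(aq^{-2i},bq^{-2j})$ (resp.\ $c_{\BV,\BW}R^{+-}_{aq^{-2i},bq^{2j}}$), each $U_q(\Gaff)$-linear by Lemma~\ref{lem: ++ or -- matrix} (resp.\ Lemma~\ref{lem: +- matrix}). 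The hypothesis $aq^{-2i} \neq bq^{2j}$ in (B) ensures every elementary factor is defined.

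The second task is the restriction to fundamental submodules. Using the identifications $V_{s,a}^+ = V_s^+$ and $V_{t,b}^- = V_t^-$ from Lemmas~\ref{lem: fundamental modules} and~\ref{lem: negative fundamental module} (so the subspaces are independent of $a,b$), together with the quantum exterior/symmetric power viewpoint of Example~\ref{example: tensor square} ($V_s^+ \cong \bigwedge_q^s \BV$, $V_t^- \cong S_q^t \BW$), one has that each Perk--Schultz $R$-matrix $R(z,w)$ is Yang--Baxter compatible with the relevant antisymmetrizer (and likewise $R^{+-}$ with the symmetrizer). Iterating across all factors gives $R^{s,t}_{a,b}(V_s^+ \otimes V_t^+) \subseteq V_s^+ \otimes V_t^+$ inside $\BV^{\otimes s+t}$, and analogously $R^{s|t}_{a,b}(V_s^+ \otimes V_t^-) \subseteq V_s^+ \otimes V_t^-$ inside $\BV^{\otimes s} \otimes \BW^{\otimes t}$. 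Composing with $c$ yields the claimed restrictions $\phi(V_{s,a}^+ \otimes V_{t,b}^+) \subseteq V_{t,b}^+ \otimes V_{s,a}^+$ and $\phi(V_{s,a}^+ \otimes V_{t,b}^-) \subseteq V_{t,b}^- \otimes V_{s,a}^+$.

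The third task is to compute the scalars. Weight-preservation of the $R$-matrices, combined with the restriction above, pins down $R^{s,t}_{a,b}(v^{(s)} \otimes v^{(t)})$ and $R^{s,t}_{a,b}(v_\kappa^{\otimes s+t})$ up to scalars $X, Y$: the $(s+t)\epsilon_\kappa$-weight subspace of $\BV^{\otimes s+t}$ is spanned by $v_\kappa^{\otimes s+t}$, and for $\lambda := (\epsilon_1+\cdots+\epsilon_s) + (\epsilon_1+\cdots+\epsilon_t)$ a coordinate-wise analysis shows the $\lambda$-weight subspace of $V_s^+ \otimes V_t^+$ is one-dimensional, spanned by $v^{(s)} \otimes v^{(t)}$ (the decomposition $\lambda = \mu + \nu$ into a $V_s^+$-weight $\mu$ and a $V_t^+$-weight $\nu$ is forced by the bounds $\mu_i,\nu_i \in \{0,1\}$ for $i \leq M$ and by the prescribed total sums $s,t$). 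Direct inspection of~\eqref{for: Perk-Schultz matrix coefficients} gives $R(z,w)(v_\kappa \otimes v_\kappa) = (zq_\kappa - wq_\kappa^{-1})(v_\kappa \otimes v_\kappa)$, whence $Y = \prod_{i,j}(aq^{-2i}q_\kappa - bq^{-2j}q_\kappa^{-1}) \in \BC[a,b]$; polynomiality of $X \in \BC[a,b]$ follows from the polynomial dependence of $R^{s,t}_{a,b}$ on $(a,b)$. For (B), direct inspection of~\eqref{for: positive-negative R matrix} shows $R^{+-}_{z,w}(v_p \otimes w_j) = v_p \otimes w_j$ whenever $p \neq j$; since each tensor monomial of $v^{(s)} \otimes w^{(t)}$ has $\BV$-indices $\leq M$ and $\BW$-indices $\geq M+1$, and $v_\kappa^{\otimes s} \otimes w_1^{\otimes t}$ has $\BV$-index $\kappa \neq 1 = \BW$-index, every elementary factor of $R^{s|t}_{a,b}$ acts by identity, giving the exact equalities in (B).

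\textbf{Main obstacle.} The subtlest step is the stability of the quantum exterior/symmetric powers under the $R$-matrices in the super setting. The Yang--Baxter compatibility is standard for non-graded quantum affine algebras, but the signs from the super structure and graded permutations need careful tracking; this is also what underlies the uniqueness of the weight decomposition used to pin down $X$.
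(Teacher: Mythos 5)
Your overall architecture (elementary intertwiners for $U_q(\Gaff)$-linearity, a separate argument for the restriction, a weight analysis for the scalars) differs from the paper at the key restriction step, and that is where there is a genuine gap.

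The first and third steps are sound. The $U_q(\Gaff)$-linearity of $c_{\BV^{\otimes s},\BV^{\otimes t}}R^{s,t}_{a,b}$ (resp.\ $c_{\BV^{\otimes s},\BW^{\otimes t}}R^{s|t}_{a,b}$) is exactly the iteration of Lemma~\ref{lem: ++ or -- matrix} (resp.\ Lemma~\ref{lem: +- matrix}) used in the paper. Your computation of $Y$ and of the scalars in (B) by direct inspection of~\eqref{for: Perk-Schultz matrix coefficients} and~\eqref{for: positive-negative R matrix} is correct and in fact more explicit than the paper's; the observation that all $\BV$-indices appearing in $v^{(s)}$ are $\leq M$ and all $\BW$-indices in $w^{(t)}$ are $> M$ is precisely why the $R^{+-}$ factors act as identity, and the one-dimensionality of the $\lambda$-weight space of $V_s^+\otimes V_t^+$ is a clean way to pin down $R^{s,t}_{a,b}(v^{(s)}\otimes v^{(t)})\in\BC v^{(s)}\otimes v^{(t)}$.

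The second step, however, is not carried out. You assert that $R(z,w)$ is ``Yang--Baxter compatible with the relevant antisymmetrizer'' and that $R^{+-}$ is compatible ``with the symmetrizer,'' and derive the restriction $R^{s,t}_{a,b}(V_s^+\otimes V_t^+)\subseteq V_s^+\otimes V_t^+$ (and the analogue for $R^{s|t}$). This is the crux of the lemma, and it is not a consequence of anything stated so far. To make it precise one would have to (i)~realize $V_s^+$ and $V_t^-$ as images of explicit projectors built from $R$ on $\BV^{\otimes s}$ and from the $\BW$--$\BW$ $R$-matrix on $\BW^{\otimes t}$, and (ii)~verify mixed Yang--Baxter identities coupling those projectors with the $R^{+-}$ factors in~\eqref{equ: fusion even odd}. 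The paper does not establish a universal $R$-matrix (this is explicitly left open in the remark after Lemma~\ref{lem: tensor product of odd even}), so the mixed Yang--Baxter relations are not available for free, and in particular the compatibility of $R^{+-}$ with the symmetrizer defining $V_t^-\subseteq\BW^{\otimes t}$ is far from immediate.

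The paper sidesteps this entirely: it uses the deferred Lemma~\ref{lem: tensor product of odd even} to get, for generic $\frac{a}{b}$, that $V_{s,a}^+\otimes V_{t,b}^+$ and $V_{t,b}^+\otimes V_{s,a}^+$ are both simple and generated by $v_{\kappa}^{\otimes s+t}$; since the already-established $U_q(\Gaff)$-linear map sends $v_{\kappa}^{\otimes s+t}$ to a nonzero multiple of itself, it automatically maps the first submodule onto the second. The inclusion $R^{s,t}_{a,b}(V_s^+\otimes V_t^+)\subseteq V_s^+\otimes V_t^+$ is thus a polynomial identity in $(a,b)$ that holds on a Zariski-dense set, hence for all $a,b$. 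This argument is uniform for (A) and (B), needs no fusion/projector identities, and does not rely on a universal $R$-matrix. I would suggest replacing your fusion step with this genericity-plus-polynomiality argument, or else carrying out the mixed Yang--Baxter identities in full.
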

\begin{proof}
We shall prove (A); the same idea goes for (B). By Lemma \ref{lem: ++ or -- matrix}, $F_{a,b} := c_{\BV^{\otimes s},\BV^{\otimes t}} R^{s,t}_{a,b}$ is indeed $U_q(\Gaff)$-linear. By Equations \eqref{for: Perk-Schultz matrix coefficients} and \eqref{equ: fusion even even},
\begin{displaymath}
(1):\quad F_{a,b}(v_{\kappa}^{\otimes s+t}) = Y v_{\kappa}^{\otimes s+t},\quad Y := \prod_{i=1}^s\prod_{j=1}^t (aq^{-2i-1} - bq^{-2j+1}).
\end{displaymath}
It is therefore enough to show the following (by Lemmas \ref{lem: fundamental modules}--\ref{lem: negative fundamental module} $V_{s,a}^+ = V_s^+$)
$$ (2):\quad  F_{a,b}(V_{s}^+ \otimes V_{t}^+) \subseteq V_{t}^+ \otimes V_{s}^+, \quad F_{a,b}(v^{(s)} \otimes v^{(t)}) \in \BC v^{(t)} \otimes v^{(s)}.$$ 
By Equation \eqref{for: Perk-Schultz matrix coefficients} the matrix coefficients of $F_{a,b} \in \End (\BV^{\otimes s+t})$ are polynomial in $a,b$. Let $\Sigma \subset \BC^{\times}$ be as in Lemma \ref{lem: tensor product of odd even}, so that $S_1:= V_{s,a}^+ \otimes V_{t,b}^+ \cong V_{t,b}^+ \otimes V_{s,a}^+=:S_2$ are simple $U_q(\Gaff)$-modules whenever $\frac{a}{b} \notin \Sigma$. We show that (2) holds for $\frac{a}{b} \in \BC^{\times}\setminus \Sigma$. This will imply (2) for all $a,b \in \BC^{\times}$ by polynomiality, as in the proof of Lemma \ref{lem: +- matrix}. 

Let $\frac{a}{b} \notin \Sigma$. Then the simple $U_q(\Gaff)$-modules $S_1$ and $S_2$ are both generated by $v_{\kappa}^{\otimes s+t}$. By (1), $F_{a,b}$ restricts to $U_q(\Gaff)$-linear map $F_{a,b}: S_1\longrightarrow S_2$, and the first relation in (2) is proved. Since $v^{(s)}\otimes v^{(t)}$ and $v^{(t)}\otimes v^{(s)}$ are highest $\ell$-weight vectors of $S_1,S_2$ respectively, they must be stable by $F_{a,b}$. This proves the second relation in (2).
\end{proof}
\section{Denominators of $R$--matrices}   \label{sec: even odd}
Lemma \ref{lem: normalized r matrix even odd} together with its proof gives us three types of rational functions of $\frac{a}{b}$: the $\End(V_s^+\otimes V_t^+)$-valued $X^{-1} R_{a,b}^{s,t}|_{V_s^+\otimes V_t^+}$ and $Y^{-1} R_{a,b}^{s,t}|_{V_s^+ \otimes V_t^+}$ for $1\leq s,t\leq M$; the $\End(V_{s}^+\otimes V_t^-)$-valued $R_{a,b}^{s|t}|_{V_s^+\otimes V_t^-}$ for $1 \leq s \leq M, 1 \leq t \leq N$. The denominator of such a rational function $R(a,b)$ is defined as a homogeneous polynomial $D(a,b)$ in $a,b$ of minimal degree such that $D(a,b)R(a,b)$ is polynomial; it is well-defined up to scalar product by a non-zero complex number. In this section, we shall compute these denominators.

 In the following, if $v,w$ belong to the same vector space and $v \in \BC^{\times} w$, then we write $v \doteq w$. The denominator of the third rational function is fairly easy.

\begin{theorem}  \label{thm: pole even odd}
Let $1 \leq s \leq M$ and $1 \leq t \leq N$. The denominator of the $\End(V_{s}^+\otimes V_{t}^-)$-valued rational function $R_{a,b}^{s|t}|_{V_{s}^+\otimes V_{t}^-}$ is $bq^2-aq^{-2s}$.
\end{theorem}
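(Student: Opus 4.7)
The strategy is to compute $R^{s|t}_{a,b}$ on a carefully chosen cyclic generator of $V_s^+ \otimes V_t^-$ and transfer the denominator bound to the full restricted map via $U_q(\Gaff)$-linearity.

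The cyclic generator will come from Lemma~\ref{lem: cyclicity Chari}. Since $V_s^+$ and $V_t^-$ are simple by Proposition~\ref{prop: simplicity of fundamental modules}, both are simultaneously highest and lowest $\ell$-weight $U_q(\Gaff)$-modules. Applied with $V_+ := V_s^+$ (highest $\ell$-weight vector $v^{(s)}$) and $V_- := V_t^-$ (lowest $\ell$-weight vector $w_1^{\otimes t}$), Lemma~\ref{lem: cyclicity Chari} gives that the mixed tensor $v := v^{(s)} \otimes w_1^{\otimes t}$ generates $V_s^+ \otimes V_t^-$ as a $U_q(\Gaff)$-module, for every $(a,b) \in (\BC^\times)^2$. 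I would then compute $R^{s|t}_{a,b}(v)$ directly from \eqref{for: positive-negative R matrix} and \eqref{equ: fusion even odd}. The pivotal observation from \eqref{for: positive-negative R matrix} is that $R^{+-}_{a',b'}$ acts as the identity on every pure tensor $v_k \otimes w_l$ with $k \neq l$, since the non-identity contributions (coming from the $E_{ji} \otimes E_{ji}$ terms) require $k = l$. Expanding $v^{(s)} = \sum_{\sigma \in \mathfrak{S}_s} (-q)^{l(\sigma)} v_{\sigma(1)} \otimes \cdots \otimes v_{\sigma(s)}$ via Lemma~\ref{lem: highest weight vector even fund}, each summand has $v_1$ occupying the unique position $i_\sigma := \sigma^{-1}(1)$ while all $\BW$-positions initially carry $w_1$; only the factors $(R^{+-}_{aq^{-2i_\sigma}, bq^{2j}})_{i_\sigma, s+j}$ act non-trivially at the outset. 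Tracking the propagation of the resulting diagonal modifications through the remaining factors will collapse the $st$ apparent pole factors into the single factor $bq^2 - aq^{-2s}$ in the denominator of $R^{s|t}_{a,b}(v)$, and will exhibit a non-vanishing residue at $b = aq^{-2s-2}$.

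Let $F := c_{\BV^{\otimes s}, \BW^{\otimes t}} R^{s|t}$; by Lemma~\ref{lem: normalized r matrix even odd}(B), $F$ restricts to a rational family of $U_q(\Gaff)$-linear maps $V_s^+ \otimes V_t^- \to V_t^- \otimes V_s^+$. At any $(a_0, b_0) \in (\BC^\times)^2$ with $b_0 q^2 \neq a_0 q^{-2s}$, the computation above renders $F(v)|_{(a_0, b_0)}$ finite, and the cyclicity supplies $u \in U_q(\Gaff)$ with $u v = x$ at $(a_0,b_0)$ for any $x \in V_s^+ \otimes V_t^-$, so $F(x)|_{(a_0,b_0)} = u \cdot F(v)|_{(a_0, b_0)}$ is also finite, using that $U_q(\Gaff)$ acts via matrices with polynomial coefficients in $(a, b)$. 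Combined with the simple-pole order inherited from the explicit form of $F(v)$, the denominator of $F|_{V_s^+ \otimes V_t^-}$, equivalently of $R^{s|t}|_{V_s^+ \otimes V_t^-}$, divides $bq^2 - aq^{-2s}$. The lower bound is the non-vanishing residue already exhibited in the computation.

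The main obstacle is the combinatorial bookkeeping in the explicit computation of $R^{s|t}_{a,b}(v)$: the product \eqref{equ: fusion even odd} has $st$ non-commuting factors, and although the identity action of $R^{+-}$ on off-diagonal pairs dramatically restricts which factors initially contribute, the diagonal modifications spawned by successive R-matrices can trigger new coincidences whose propagation must be carefully tracked in order to verify the exact cancellation pattern.
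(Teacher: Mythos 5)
Your strategy is the same one the paper uses: invoke Lemma~\ref{lem: cyclicity Chari} to obtain a cyclic generator $x$ of $V_{s}^+\otimes V_{t}^-$, compute $R^{s|t}_{a,b}(x)$ explicitly, and transfer the denominator to the whole map by $U_q(\Gaff)$--linearity. You also correctly isolate the pivotal structural fact that $R^{+-}$ fixes $v_k\otimes w_l$ when $k\neq l$. The difference is your choice of cyclic vector $v^{(s)}\otimes w_1^{\otimes t}$ (highest $\otimes$ lowest), whereas the paper uses $v_\kappa^{\otimes s}\otimes w^{(t)}$ (lowest $\otimes$ highest). Both are legitimate applications of Lemma~\ref{lem: cyclicity Chari}, but the paper's choice is specifically tuned to the factorization $R^{s|t}_{a,b}=F_tF_{t-1}\cdots F_1$ by $\BW$-slot: the $\BV$-side $v_\kappa^{\otimes s}$ consists of repeated identical indices, so inside $F_1$ at most one jump off the diagonal can occur (the remaining $(l,s+1)$ factors then see $v_\kappa\otimes w_i$ with $i<\kappa$ and act trivially), and the resulting product of scalar factors telescopes cleanly to $q^{-s}\frac{bq^2-a}{bq^2-aq^{-2s}}$. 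By contrast, with $v^{(s)}\otimes w_1^{\otimes t}$ the $\BV$-side carries the $s$ distinct indices $1,\dots,s$, so a jump $v_1\otimes w_1\to v_m\otimes w_m$ at $(i_\sigma,s+1)$ with $2\leq m\leq s$ can collide with some later $v_{\sigma(l)}$, triggering chains of jumps within a single layer.

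This is where the genuine gap lies. First, you never carry out the computation; you flag the cancellation bookkeeping as ``the main obstacle'' and assert without verification that ``the $st$ apparent pole factors collapse into the single factor $bq^2-aq^{-2s}$.'' In fact one can check directly (e.g.\ $s=1,t=2$) that the raw tensor-basis coefficients of $R^{s|t}_{a,b}(v^{(s)}\otimes w_1^{\otimes t})$ do carry spurious poles such as $bq^4-aq^{-2}$, and that the apparent cancellation happens only because the image sits inside $V_s^+\otimes V_t^-$ and the numerator happens to vanish; this is precisely the non-obvious phenomenon you would have to control. Second, and related, you are missing the crucial filtering step: the paper's proof relies on a Claim that a vector of $V_t^-\subset\BW^{\otimes t}$ is already determined by its components $w_i\otimes\BW^{\otimes t-1}$ with $i=\kappa$ or $i\leq\kappa-t$. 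This injectivity statement is what lets the paper ignore the ``irrelevant'' components that would otherwise contaminate the denominator, and it is what, together with the telescoping, yields the exact answer. To make your choice of generator work you would need the mirror Claim for $V_s^+\subset\BV^{\otimes s}$, and it would then be natural to re-order the fusion product as $\prod_{i=1}^s\prod_{j=t}^1$ (legitimate, since factors at $(i,s+j)$ and $(i',s+j')$ with $i\neq i'$, $j\neq j'$ commute) so that the all-$w_1$ side controls the layering. As written, the proposal identifies the right coarse strategy and the correct auxiliary lemmas but leaves the actual computation and the component-filtering argument undone, so the denominator statement --- both the upper bound and the claimed non-vanishing residue --- is unproven.
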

\begin{proof}
By definition $v_{\kappa}^{\otimes s}$ is a lowest $\ell$-weight vector generating the simple $U_q(\Gaff)$-module $V_{s,a}^+$. Owing to Lemmas \ref{lem: cyclicity Chari} and \ref{lem: fundamental modules}, $v_{\kappa}^{\otimes s} \otimes w^{(t)}$ generates the $U_q(\Gaff)$-module $V_{s,a}^+\otimes V_{t,b}^-$. By Lemma \ref{lem: normalized r matrix even odd}, $R_{a,b}^{s|t}$ respects the $U_q(\Gaff)$-module structures. We are reduced to consider the rational function $x_{a,b} := R_{a,b}^{s|t}(v_{\kappa}^{\otimes s} \otimes w^{(t)}) \in \BV^{\otimes s} \otimes W_t^-(a,b)$.

\noindent {\it Claim.} A vector in the subspace $V_{t}^-$ of $\BW^{\otimes t}$ is uniquely determined by its components $w_i \otimes \BW^{\otimes t-1}$ with $i = \kappa$ or $i \leq \kappa - t$. \footnote{To illustrate this claim, let $A$ be the algebra generated by the $w_i$ for $i\in I$ and subject to relations $w_i w_j - (-1)^{|i||j|}q w_jw_i = w_l^2 = 0$ for $1\leq i < j \leq \kappa$ and $M<l\leq \kappa$; see Example \ref{example: tensor square}. Let $1\leq t \leq N$ and $m$ be a non-zero product of $t$ $w_i$'s. Then up to scalar multiple $m = w_i m'$ with $i = \kappa$ or $1\leq i \leq \kappa-t$. }
\begin{proof}
We prove the equivalent following statement $P(t)$ by induction on $1 \leq t \leq N$:
\begin{displaymath}
P(t):\quad V_{t}^- \cap (\sum_{\kappa-t<j<\kappa} w_j \otimes \BW^{\otimes t-1}) = 0.
\end{displaymath}
For $t=1$, this is obvious. $P(2)$ comes from Example \ref{example: tensor square}. Assume that $t > 2$. Suppose that LHS of $P(t)$ contains a non-zero vector $y = \sum_{j=\kappa-t+1}^{\kappa-1}w_j \otimes x_j$. By Lemma \ref{lem: negative fundamental module}
\begin{displaymath}
V_{t}^- = U_q(\Glie) (w_1^{\otimes t-1} \otimes w_1) \subseteq U_q(\Glie) w_1^{\otimes t-1} \otimes \BW = V_{t-1}^- \otimes \BW. 
\end{displaymath}
The induction hypothesis $P(t-1)$ implies that $x_{\kappa-t+1} \neq 0$. A careful analysis of the first two tensor factors of $y$ in view of $V_{t}^- \subseteq V_{2}^- \otimes V_{t-2}^-$ leads to
\begin{displaymath}
y = w_{\kappa-t+1}\otimes \sum_{j=\kappa-t+2}^{\kappa-1} w_j \otimes y_j + \sum_{j=\kappa-t+2}^{\kappa-1} w_j \otimes (-q) w_{\kappa-t+1} \otimes y_j.
\end{displaymath}
Since $V_{t}^- \subseteq \BW \otimes V_{t-1}^-$, we must have $0\neq x_{\kappa-t+1} = \sum_{j=\kappa-t+2}^{\kappa-1} w_j \otimes y_j \in V_{t-1}^-$, in contradiction with $P(t-1)$. This proves $P(t)$.
\end{proof} 
Let us determine the components $\BV^{\otimes s} \otimes w_i \otimes \BW^{\otimes t-1}$ in $x_{a,b}$. By Equation \eqref{equ: fusion even odd},
\begin{displaymath}
R_{a,b}^{s|t} = F_t F_{t-1} \cdots F_2 F_1,\quad F_j = \prod_{l=1}^s (R_{aq^{-2l},bq^{2j}}^{+-})_{l,s+j}.
\end{displaymath}
Let $\tau \in \mathfrak{S}_t$. If $\tau(1) \neq t$, then $F_1$ fixes the term $v_{\kappa}^{\otimes s} \otimes w_{\kappa-t+\tau(1)} \otimes \cdots \otimes w_{\kappa-t+\tau(t)}$ in $v_{\kappa}^{\otimes s}\otimes w^{(t)}$. Applying $F_tF_{t-1}\cdots F_2$ to this term results in irrelevant components $\BV^{\otimes s} \otimes w_j \otimes \BW^{\otimes t-1}$ with $\kappa-t< j < \kappa$. We are reduced to consider the case $\tau(1) = t$ and to evaluate $R_{a,b}^{s|t}(v_{\kappa}^{\otimes s} \otimes w_{\kappa} \otimes x)$, where $x$ is a sum of $(t-1)$-fold tensor products of the $w_j$ with $\kappa-t < j < \kappa$. By Equation \eqref{for: positive-negative R matrix}, the term $\BV^{\otimes s}\otimes w_{\kappa} \otimes \BW^{\otimes t-1}$ in $F_1(v_{\kappa}^{\otimes s} \otimes w_{\kappa} \otimes x)$ is
\begin{displaymath}
\prod_{l=1}^{s} \frac{bq^2 q^{-1}-aq^{-2l} q}{bq^2-aq^{-2l}} v_{\kappa}^{\otimes s} \otimes w_{\kappa} \otimes x = q^{-s} \frac{bq^2 - a}{bq^{2}-aq^{-2s}} v_{\kappa}^{\otimes s} \otimes w_{\kappa} \otimes x.
\end{displaymath}
Notice that the $F_j$ with $2\leq j \leq t$ fix $v_{\kappa}^{\otimes s} \otimes w_{\kappa} \otimes x$. So the above term is exactly the component of $\BV^{\otimes s} \otimes w_{\kappa} \otimes \BW^{\otimes t-1}$ in $x_{a,b}$. For $1\leq i \leq \kappa-t$, again by Equation \eqref{for: positive-negative R matrix}, the terms $\BV^{\otimes s} \otimes w_i \otimes \BW^{\otimes t-1}$ in $F_1(v_{\kappa}^{\otimes s} \otimes w_{\kappa} \otimes x)$ and in $x_{a,b}$ are the same: 
\begin{displaymath}
\sum_{k=1}^s (-1)^{(s-k)(|i|+\odd)} \frac{bq^2(q_i-q_i^{-1})}{bq^2-aq^{-2k}} \prod_{l=k+1}^s \frac{bq^2 q^{-1}-aq^{-2l} q}{bq^2-aq^{-2l}} v_{\kappa}^{\otimes k-1} \otimes v_i \otimes v_{\kappa}^{\otimes s-k} \otimes w_i \otimes x.
\end{displaymath}
The coefficients are $(-1)^{(s-k)(|i|+\odd)}\frac{bq^{k-s+2}(q_i-q_i^{-1})}{bq^2-aq^{-2s}}$. Together with the claim, we conclude that the denominator of $x_{a,b}$ is $bq^2-aq^{-2s}$.
\end{proof}

The denominators of the first two rational functions are given as follows.
\begin{theorem}   \label{thm: even even poles}
Let $1 \leq s, t \leq M$. Let $u = \min(s,t)$. In the situation of Lemma \ref{lem: normalized r matrix even odd} (A), we have $\frac{X}{Y} \doteq \frac{N}{D}$ where $N = \prod_{j=1}^u (a - b q^{-2(t-u+j)}), D = \prod_{j=1}^u (a - b q^{2(s-u+j)})$.
Moreover, $N$ (resp. $D$) is the denominator of the $\End(V_s^+\otimes V_t^+)$-valued rational function $\frac{1}{X} R_{a,b}^{s,t}|_{V_s^+\otimes V_t^+}$ (resp. $\frac{1}{Y} R_{a,b}^{s,t}|_{V_s^+\otimes V_t^+}$).  \footnote{In the non-graded case, $U_q(\widehat{\mathfrak{gl}_M})$ has fundamental modules $V_{s,a}$ where $1\leq s < M$ and $a \in \BC^{\times}$ such that $\Pi(V_{s,a})$ is of the form $(1^{s-1}, q\frac{1-zaq^{-2s-2}}{1-zaq^{-2s}},1^{M-1-s})$; see \S \ref{subsec: classification}. Up to shifts of spectral parameters, $V_{s,a}$ is $V^{(s)}_a$ in \cite[\S 2.2]{Date}, and the denominator for $V_{a}^{(s)}$ and $V_{b}^{(t)}$ therein is a polynomial of degree $\min(s,t,M-s,M-t)$.\label{footnote:Okado} }
\end{theorem}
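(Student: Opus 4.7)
Compute the scalar $X$ and simplify the ratio $X/Y$ to $N/D$, then determine the denominators via a reducibility analysis. The formula $Y = \prod_{i=1}^s \prod_{j=1}^t (aq^{-2i-1} - bq^{-2j+1}) \doteq \prod_{(i,j)}(a - bq^{2(i-j+1)})$ was already observed in the proof of Lemma~\ref{lem: normalized r matrix even odd}(A), using that $\kappa = M+N > M$ forces $q_\kappa = q^{-1}$ and hence $R(z,w)(v_\kappa \otimes v_\kappa) = (zq^{-1}-wq)(v_\kappa \otimes v_\kappa)$ from \eqref{for: Perk-Schultz matrix coefficients}.

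For $X$: the vector $v^{(s)} \otimes v^{(t)}$ lies in $(\bigoplus_{i \leq M} \BC v_i)^{\otimes (s+t)}$, since only $v_1, \ldots, v_{\max(s,t)}$ appear and these are of even parity. This subspace is preserved by $R^{s,t}_{a,b}$, on which the Perk--Schultz matrix coincides with the non-super $R$-matrix of $U_q(\widehat{\mathfrak{gl}_M})$. Thus the determination of $X$ reduces entirely to the non-super case, and invoking the classical fusion formula for quantum affine $\mathfrak{gl}_M$ (cf.\ \cite{Date}), or proceeding by induction on $s+t$ using the Yang--Baxter relation together with the recursive factorization $R^{s,t}_{a,b} = (\prod_i R(aq^{-2i}, bq^{-2t})) \cdot R^{s,t-1}_{a,b}$, one computes $X$ and verifies the identity $X/Y \doteq N/D$ directly. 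Coprimality of $N$ and $D$ is immediate: a common root would force $q^{-2(t-u+j)} = q^{2(s-u+j')}$, i.e.\ $2u = s+t+j+j' \geq s+t+2$, contradicting $u = \min(s,t) \leq (s+t)/2$ and $q$ not being a root of unity.

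\textbf{Denominators.} Set $D' := Y/D = X/N$, so that $Y = DD'$ and $X = ND'$. The crucial claim is that $D'$ divides every matrix entry of $R^{s,t}_{a,b}|_{V_s^+ \otimes V_t^+}$ as a polynomial in $a,b$. At any root $(a_0, b)$ of $D'$, both $X(a_0,b) = 0$ and $Y(a_0,b) = 0$, so the $U_q(\Gaff)$-intertwiner $c_{\BV^{\otimes s}, \BV^{\otimes t}} R^{s,t}_{a_0, b} \colon V^+_{s, a_0} \otimes V^+_{t, b} \to V^+_{t, b} \otimes V^+_{s, a_0}$ annihilates both the highest $\ell$-weight vector $v^{(s)} \otimes v^{(t)}$ and the lowest $\ell$-weight vector $v_\kappa^{\otimes (s+t)}$. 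Its kernel is a $U_q(\Gaff)$-submodule containing the two cyclic submodules generated by these vectors; applying Lemma~\ref{lem: cyclicity Chari} in both orderings (once with $V^+_{s, a_0}$ as the highest $\ell$-weight factor and $V^+_{t, b}$ as the lowest, once with the roles reversed) shows that the sum of these cyclic submodules is all of $V^+_{s, a_0} \otimes V^+_{t, b}$, so the intertwiner vanishes identically at $(a_0, b)$. Granted this, $\tilde R := R^{s,t}_{a,b}/D'$ is polynomial and satisfies $\tilde R(v^{(s)} \otimes v^{(t)}) = N \cdot v^{(s)} \otimes v^{(t)}$ and $\tilde R(v_\kappa^{\otimes (s+t)}) = D \cdot v_\kappa^{\otimes (s+t)}$; coprimality of $N$ and $D$ then forces the denominators of $Y^{-1} R^{s,t}|_{V_s^+ \otimes V_t^+} = D^{-1} \tilde R$ and $X^{-1} R^{s,t}|_{V_s^+ \otimes V_t^+} = N^{-1} \tilde R$ to be exactly $D$ and $N$ respectively, since at each root of $D$ (resp.\ $N$) the scalar $N$ (resp.\ $D$) is non-vanishing and so $\tilde R$ does not vanish there.

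\textbf{Main obstacle.} The hardest step is the vanishing claim: showing that at a common root of $X$ and $Y$ the intertwiner annihilates \emph{all} of $V^+_{s, a_0} \otimes V^+_{t, b}$, not merely the cyclic submodules generated by the hwv and lwv. One needs these two cyclic submodules to cover the entire tensor product, which requires combining two applications of Lemma~\ref{lem: cyclicity Chari} with some structural information about the simple subquotients of the (now reducible) tensor product. The explicit evaluation of $X$ and the verification of the telescoping identity $X/Y \doteq N/D$ form the other technical point: even after restriction to the non-super subspace, the off-diagonal entries of the Perk--Schultz matrix produce many ``mixing'' contributions whose combination into the stated product form requires careful bookkeeping.
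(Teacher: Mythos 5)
Your computation of $X$ via restriction to the even subspace $(\bigoplus_{i\leq M}\BC v_i)^{\otimes(s+t)}$ is a correct observation, the coprimality argument for $N$ and $D$ is fine, and the final formulas match the paper. But the denominator argument has two genuine gaps, and the paper's proof takes a materially different route that avoids both.

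\textbf{Gap 1: the cyclicity claim.} You want the intertwiner to vanish identically at a common root $(a_0,b)$ of $X$ and $Y$, arguing that the kernel contains $S_1 := U_q(\Gaff)(v^{(s)}\otimes v^{(t)})$ and $S_2:=U_q(\Gaff)(v_\kappa^{\otimes(s+t)})$ and that $S_1+S_2$ is everything ``by applying Lemma~\ref{lem: cyclicity Chari} in both orderings.'' But Lemma~\ref{lem: cyclicity Chari} asserts cyclic generation from a highest-tensored-\emph{lowest} vector, i.e.\ from $v^{(s)}\otimes v_\kappa^{\otimes t}$ or $v_\kappa^{\otimes s}\otimes v^{(t)}$ — not from the pure highest $\ell$-weight vector $v^{(s)}\otimes v^{(t)}$ or pure lowest $\ell$-weight vector $v_\kappa^{\otimes(s+t)}$. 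Neither of the ``mixed'' generators is, in general, contained in $S_1+S_2$, so the lemma gives no information about that sum. You flag this yourself as the ``hardest step,'' but the hint you offer (``some structural information about the simple subquotients'') is not a proof and is not what the paper does.

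\textbf{Gap 2: multiplicity.} Even granting the vanishing claim, showing $R^{s,t}|_{V_s^+\otimes V_t^+}$ vanishes at every \emph{point} of the variety $\{D'=0\}$ only proves that the square-free part of $D'$ divides the matrix entries. But $D'=Y/D$ is not square-free in general: for instance, at $s=t=3$, $Y\doteq(a-bq^{-2})(a-b)^2(a-bq^2)^3(a-bq^4)^2(a-bq^6)$ and $D\doteq(a-bq^2)(a-bq^4)(a-bq^6)$, so $D'\doteq(a-bq^{-2})(a-b)^2(a-bq^2)^2(a-bq^4)$, which has two repeated factors. A pointwise vanishing argument does not yield divisibility by $D'$ in such cases; you would need higher-order vanishing, which your argument does not supply.

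\textbf{What the paper does instead.} The paper sidesteps both difficulties by invoking Lemma~\ref{lem: cyclicity Chari} for the vector $v^{(s)}\otimes v_\kappa^{\otimes t}$ (highest tensored lowest, exactly the shape the lemma covers), which generates $V_{s,a}^+\otimes V_{t,b}^+$ for \emph{all} $a,b$. Because the $R$-matrix intertwines, this reduces the whole denominator computation to the denominator of the single vector-valued rational function $\tfrac{1}{X}R^{s,t}_{a,b}(v^{(s)}\otimes v_\kappa^{\otimes t})$. The paper then computes the coefficients $k_{r,\sigma}$ of the relevant terms of this vector explicitly (Claims 1--4 and the formulas (a)--(c)), from which the denominator $N$ (resp.\ $D$) is read off directly, with the correct multiplicities and with no reliance on a vanishing argument. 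The calculation of $X$ is also done concretely in the paper (Step I, via the $X_j$), rather than by appeal to the non-super fusion formula or an unspecified induction, though your reduction observation is a legitimate shortcut for that part.
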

\begin{proof}
The idea is similar to that of Theorem \ref{thm: pole even odd}. We shall compute $\frac{X}{Y}$ and prove the statement for $\frac{1}{X} R_{a,b}^{s,t}|_{V_s^+\otimes V_t^+}$. Notice that $Y$ is computed in the proof of Lemma \ref{lem: normalized r matrix even odd}. 

\noindent \underline{Step I.} By definition $X$ is the coefficient of $(-q)^{l(\tau_0)} v^{(s)} \otimes v_t \otimes v_{t-1} \otimes \cdots \otimes v_1$ in $R_{a,b}^{s,t}(v^{(s)} \otimes v^{(t)})$; here $\tau_0 \in \mathfrak{S}_t$ is the permutation $j \mapsto t+1-j$.

\noindent {\it Claim 1.} For $1 \leq i,j \leq \kappa$, the term $\BV^{\otimes s} \otimes v_i$ appears in $R_{a,c}^{s,1}(v^{(s)} \otimes v_j)$ only if $i \leq j$.

This comes from the fact that $R_{a,c}^{s,1}(V_{s,a}^+ \otimes V_{1,c}^+) \subseteq V_{s,a}^+ \otimes V_{1,c}^+$ and $v^{(s)}$ is a highest $\ell$-weight vector of $V_{s,a}^+$. For $1\leq j \leq t$, let $X_j$ denote the coefficient of $v^{(s)} \otimes v_{j}$ in $R_{a,bq^{-2(t-j)}}^{s,1}(v^{(s)}\otimes v_{j})$. Then $X = X_tX_{t-1} \cdots X_1$ by Equation \eqref{equ: fusion even even}. 

If $j > s$, then the coefficient of $v_s \otimes v_{s-1} \otimes \cdots \otimes v_1 \otimes v_j$ in $R_{a,bq^{-2(t-j)}}^{s,1}(v_s \otimes v_{s-1}\otimes \cdots v_1 \otimes v_j)$ gives $X_j = \prod_{i=1}^s(aq^{-2i} - bq^{-2(t+1-j)})$. 

If $j \leq s$, then the coefficient of $u_1 := v_j \otimes (v_s \otimes v_{s-1} \otimes \cdots \otimes v_{j+1}) \otimes (v_{j-1} \otimes v_{j-2} \otimes \cdots v_1) \otimes v_j$ in $R_{a,bq^{-2(t-j)}}^{s,1}(u_1)$ gives $X_j = (aq^{-1} - bq^{-2(t+1-j)-1}) \times \prod_{i=2}^{s} (aq^{-2i} - bq^{-2(t+1-j)})$.

In the following, we mainly treat the case $s \leq t$ so that $u = s$. The case $s > t$ will be sketched at Step V. Consider the $\BV^{\otimes s+t}$-valued polynomial $u_2 := R_{a,b}^{s,t}(v^{(s)} \otimes v_{\kappa}^{\otimes t})$. As in the proof of Theorem \ref{thm: pole even odd}, the denominator of $\frac{1}{X} u_2$ is that of $\frac{1}{X} R_{a,b}^{s,t}|_{V_s^+\otimes V_t^+}$. Already $X \doteq \frac{a-bq^{-2t}}{a-bq^{-2(t-s)}}  \prod_{i=1}^s \prod_{j=1}^t (aq^{-2i}-bq^{-2j})$ and $\frac{Y}{X} \doteq \frac{D}{N}$.

\noindent \underline{Step II.} By Equations \eqref{for: Perk-Schultz matrix coefficients} and \eqref{equ: fusion even even}, $u_2$ is a linear combination of the $v_{j_1} \otimes \cdots \otimes v_{j_{s+t}}$ where $v_i$ appears once if $i \leq s$ and $t$ times if $i = \kappa$. Similar to the claim in the proof of Theorem \ref{thm: pole even odd}, it is enough to determine for a given pair $(0 \leq r \leq s,\sigma \in \mathfrak{S}_s)$ the coefficient $k_{r,\sigma}$ in $u_2$ of the vector $v_{\kappa}^{\otimes r} \otimes v_{\sigma(1)} \otimes v_{\sigma(2)} \otimes \cdots \otimes v_{\sigma(s)} \otimes v_{\kappa}^{\otimes t-r}$. 

\noindent \underline{Step III.} Let $1 \leq i \leq s$. Define $W_i$ to be the set of $\sigma \in \mathfrak{S}_s$ such that $\sigma(s) = i$. Set 
\begin{displaymath}
v_i^{(s)} := \sum_{\sigma \in W_i} (-q)^{l(\sigma)} v_{\sigma(1)} \otimes v_{\sigma(2)} \otimes \cdots \otimes v_{\sigma(s-1)} \in V_{s-1,a}^+.
\end{displaymath}
View $\mathfrak{S}_{s-1}$ as the subgroup of $\mathfrak{S}_s$ formed of permutations fixing $s$. The multiplication $\mathfrak{S}_{s-1} \longrightarrow  \mathfrak{S}_s, \sigma \mapsto \tau_i \tau_{i+1} \cdots \tau_{s-1} \sigma$ induces a bijective map $\mathfrak{S}_{s-1} \longrightarrow W_i$ which increases the length of permutations by $s-i$; here the $\tau_j := (j,j+1)$ denote simple transpositions.  Now the next two claims comes from Equations \eqref{for: Perk-Schultz matrix coefficients} and \eqref{equ: fusion even even}. 

\noindent {\it Claim 2.} $R_{a,b}^{s-1,t}(v_i^{(s)}\otimes v_{\kappa}^{\otimes t})$ is obtained from $(-q)^{s-i} R_{a,b}^{s-1,t}(v^{(s-1)} \otimes v_{\kappa}^{\otimes t})$ by replacing the $v_j$ in the tensor factors with $v_{j+1}$ whenever $i\leq j\leq s-1$.

\noindent {\it Claim 3.} The term $\BV^{\otimes s-1} \otimes v_i \otimes \BV^{\otimes t}$ in $u_2$ is obtained by inserting $f_s v_i$ at the $s$-th position of the tensor factors of  $R_{a,b}^{(s-1,t)}(v_i^{(s)} \otimes v_{\kappa}^{\otimes t})$. Here 
$$ (a).\quad f_s = \prod_{j=1}^t (aq^{-2s} - bq^{-2j}).$$

The next claim reduces the problem of $V_{s}^+\otimes V_t^+$ to the case $V_s^+\otimes V_s^+$.

\noindent {\it Claim 4.} Let $g_r$ be the coefficient of $v_{\kappa}^{\otimes r} \otimes v_1 \otimes v_2 \otimes \cdots \otimes v_r$ in $R_{a,b}^{r,r}(v^{(r)}\otimes v_{\kappa}^{\otimes r})$. Then for $\sigma \in \mathfrak{S}_r$, the coefficient of $v_{\kappa}^{\otimes r} \otimes v_{\sigma(1)} \otimes v_{\sigma(2)} \otimes \cdots \otimes v_{\sigma(r)} \otimes v_{\kappa}^{\otimes t-r}$ in $R_{a,b}^{r,t}(v^{(r)}\otimes v_{\kappa}^{\otimes t})$ is 
$$(b).\quad (-q)^{l(\sigma)} g_r \times \prod\limits_{i=1}^r \prod\limits_{j=r+1}^t (aq^{-2i-1}-bq^{-2j+1}) =: (-q)^{l(\sigma)} h_r.$$

Indeed, bases on the explicit formula of $v^{(r)} \in V_r^+$,  the coefficient of $v_{\kappa}^{\otimes r} \otimes v_{\sigma(1)} \otimes v_{\sigma(2)} \otimes \cdots \otimes v_{\sigma(r)}$ in $R_{a,b}^{r,r}(v^{(r)}\otimes v_{\kappa}^{\otimes r}) \in V_{r}^+ \otimes V_{r}^+$ should be $(-q)^{l(\sigma)} g_r$. Combining Claims 2--4, we obtain the following: for $0 \leq r \leq s$ and $\sigma \in \mathfrak{S}_s$, the coefficient in $u_2$ of the vector $v_{\kappa}^{\otimes r} \otimes v_{\sigma(1)} \otimes v_{\sigma(2)} \otimes \cdots \otimes v_{\sigma(s)} \otimes v_{\kappa}^{\otimes t-r}$  is  
$$(c). \quad (-q)^{x_{\sigma,s-r}} f_s f_{s-1} \cdots f_{r+1} h_{r} = k_{r,\sigma}$$
 for certain $x_{\sigma,l} \in \BZ_{\geq 0}$  defined inductively by Claims 2 and 4. 

\noindent \underline{Step IV.}  Compute $g_s$. Let $\sigma \in \mathfrak{S}_s$ with $i_l := \sigma^{-1}(l)$ for $1 \leq l \leq s$. By Equations \eqref{for: Perk-Schultz matrix coefficients} and \eqref{equ: fusion even even}, the coefficient of $v_{\kappa}^{\otimes s} \otimes v_1 \otimes v_2 \otimes \cdots \otimes v_s$ in $R_{a,b}^{s,s}(v_{\sigma(1)}\otimes v_{\sigma(2)}\otimes \cdots \otimes v_{\sigma(s)} \otimes v_{\kappa}^{\otimes s})$ is 
\begin{align*}
&\quad \quad (-1)^{l(\sigma)} \prod_{l\neq i_1}  (aq^{-2l} - bq^{-2}) \times aq^{-2i_1} (q-q^{-1}) \\
& \quad \times \prod_{l\neq i_1,i_2} (aq^{-2l} - bq^{-4}) \times aq^{-2i_2} (q-q^{-1}) \times (aq^{-2i_1-1} - bq^{-3})  \\
& \quad \times \prod_{l\neq i_1,i_2,i_3}(aq^{-2l} - bq^{-6}) \times aq^{-2i_3} (q-q^{-1}) \times (aq^{-2i_1-1} - bq^{-5}) (aq^{-2i_2-1} - bq^{-5}) \\
& \quad \times \cdots \times aq^{-2i_s} (q-q^{-1}) \times \prod_{l\neq i_s} (aq^{-2l-1}-bq^{-2s+1}) \\
&= (-1)^{l(\sigma)} a^s (q-q^{-1})^s q^{-s(s+1)-\frac{s(s-1)}{2}} \prod_{l=1}^s \prod_{j=1}^{s-1} (aq^{-2l} - bq^{-2j}).
\end{align*}
From the explicit formula of $v^{(s)}$ it follows that $g_s = a^s C_s \prod_{l=1}^s \prod_{j=1}^{s-1} (aq^{-2l} - bq^{-2j})$ where
\begin{displaymath}
  C_s := (q-q^{-1})^s q^{-s(s+1)-\frac{s(s-1)}{2}} (\sum_{\sigma \in \mathfrak{S}_s} q^{l(\sigma)}) = (q-q^{-1})^s q^{-s(s+1)-\frac{s(s-1)}{2}} \prod_{i=1}^s \frac{q^i - 1}{q-1} \neq 0. 
\end{displaymath}
Combining with the formulas (a)--(c) above, we have:
\begin{displaymath}
k_{r,\sigma} \doteq f_sf_{s-1}\cdots f_{r+1} h_{r} \doteq \frac{a^{r}}{\prod_{i=1}^{r}(aq^{-2i}-bq^{-2t})} \prod_{i=1}^s \prod_{j=1}^t (aq^{-2i}-bq^{-2j}).
\end{displaymath}
It follows that $\frac{k_{r,\sigma}}{X} \doteq a^{r} (a-bq^{-2(t-s)}) \prod_{j=t-r}^t (a-bq^{-2j})^{-1}$.

\noindent \underline{Step V.} Finally, let us consider the case $s > t$. We determine the p\^{o}les of the $\BV^{\otimes s+t}$-valued function  $R_{a,b}^{s,t}(v_{\kappa}^{\otimes s} \otimes v^{(t)}) =: u_3$. We have $X  \doteq \frac{a-bq^{-2t}}{a-b} \prod_{i=1}^s \prod_{j=1}^t (aq^{-2i}-bq^{-2j})$ and again $\frac{Y}{X} = \frac{D}{N}$. As in Step II, we are reduced to determine the coefficients $k_{r,\sigma}'$ in $u_3$ of the vectors $v_{\kappa}^{\otimes s-r} \otimes v_{\sigma(1)}\otimes \cdots v_{\sigma(t)} \otimes v_{\kappa}^{\otimes r}$ where $(0 \leq r \leq t, \sigma \in \mathfrak{S}_t)$. Similar arguments as Claims 2--4 and Step IV indicate that
\begin{displaymath}
k_{r,\sigma}' \doteq \frac{b^r}{\prod_{j=t-r+1}^t (aq^{-2}-bq^{-2j})} \prod_{i=1}^s\prod_{j=1}^t (aq^{-2i}-bq^{-2j}).
\end{displaymath}
Hence $\frac{k_{r,\sigma}'}{X} \doteq b^r (a-b) \prod_{j=t-r}^{t} (a-bq^{-2j})^{-1}$.
\end{proof}
\section{Weyl modules over quantum affine $\mathfrak{gl}(1,1)$}   \label{sec: Weyl modules}
In this section $M=N=1$ and $\Glie = \mathfrak{gl}(1,1)$. We discuss Weyl modules over $U_q(\Gaff)$, which were previously defined in \cite{Z1}.\footnote{In \cite[\S 4.1]{Z1} Weyl modules were defined in terms of Drinfeld loop generators. It is not difficult to translate it by RTT generators, using the Ding--Frenkel homomorphism reviewed in \cite[Theorem 3.12]{Z2}.}

Let $\CR_0$ be the set of rational functions $f(z) \in \BC(z)$ which are products of the $c\frac{1-za}{1-zac^2}$ with $a,c \in \BC^{\times}$. Let $\CR_1$ be the set of pairs $(f(z),P(z)) \in \CR_0 \times \BC[z]$ such that $P(z) \in 1+z\BC[z]$ and $\frac{P(z)}{f(z)} \in \BC[z]$. (So $\deg \frac{P}{f} = \deg P$.) For $(f,P) \in \CR_1$, the {\it Weyl module} $\CW(f;P)$ is the $U_q(\Gaff)$-module generated by an even vector $w$ and subject to relations: 
\begin{itemize}
\item[(W1)] $s_{12}(z) w = t_{12}(z) w = 0, s_{22}(z) w = t_{22}(z) w = w, s_{11}(z)w = f(z) w = t_{11}(z) w$;
\item[(W2)] $\frac{P(z)}{f(z)} s_{21}(z) w \in \CW(f;P)[[z]]$ is a polynomial of degree $\leq \deg P$.
\end{itemize}
In the last two equations of (W1), $f(z) \in \BC(z)$ is to be developed at $z = 0,\infty$ accordingly. Let $V(f)$ be the simple highest $\ell$-weight $U_q(\Gaff)$-module whose highest $\ell$-weight vector is even and verifies (W1). Let $f = \prod_{i} c_i \frac{1-za_i}{1-za_i c_i^2}$ be such that $a_i,c_i \in \BC^{\times}$ and $a_i \neq a_j c_j^2$ whenever $i \neq j$. Then $V(f) \cong \otimes_i V(c_i \frac{1-za_i}{1-za_i c_i^2})$ as $U_q(\Gaff)$-modules \cite[Theorem 5.2]{Z2}.

Recall from \cite[\S 2]{Z4} the Drinfeld generators of $U_q(\Gaff)$: $E_n,F_n,h_{s}, \phi_{n}^{\pm}, (s_{11}^{(0)})^{\pm 1}$ with $n \in \BZ$ and $s \in \BZ_{\neq 0}$. Set $F^+(z) := -\sum_{n>0}F_n z^n$ and $\phi^{\pm}(z) = \sum_{s\geq 0} \phi_{\pm s}^{\pm} z^{\pm s}$. We have:
\begin{align*}
& s_{11}^{(0)} \exp((q-q^{-1})\sum_{s>0} h_{s} z^s) = s_{11}(z),\quad (s_{11}^{(0)})^{-1} \exp((q^{-1}-q)\sum_{s<0} h_s z^s) = t_{11}(z), \\
& \Delta F^+(z) = \phi^+(z) \otimes F^+(z) + F^+(z) \otimes 1,\quad \Delta \phi^{\pm}(z) = \phi^{\pm}(z) \otimes \phi^{\pm}(z), \\
&[h_s,F_n] = \frac{1-q^{2s}}{s(q-q^{-1})}F_{n+s}, \quad F_nF_m + F_mF_n = 0 = h_s h_t - h_th_s,\quad F^+(z) = s_{21}(z)s_{11}(z)^{-1}.
\end{align*}
The $\phi_n^{\pm}$ are central. Let $U^-$ (resp. $U^{\geq 0}$) be the subalgebra generated by the $F_n$ (resp. the other Drinfeld generators). Then $U_q(\Gaff) = U^- U^{\geq 0}$.
\begin{lem} \label{lem: Weyl modules gl(1,1)}
For $(f,P) \in \CR_1$, $\CW(f;P)$ has a simple quotient $V(f)$ and is spanned by the $F_{n_1}F_{n_2}\cdots F_{n_s} w$ with $s \geq 0$ and $n_j > 0$ for $1\leq j \leq s$.  Furthermore, for all $w' \in \CW$, (W2) holds and $\phi^+(z) w' = f(z)^{-1}w'$.
\end{lem}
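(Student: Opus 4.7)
My plan is to exploit the triangular decomposition $U_q(\Gaff) = U^- U^{\geq 0}$: first show $U^{\geq 0} w = \BC w$, so that $\CW = U^- w$; then analyze $U^- w$ via the super-commutativity $F_nF_m + F_mF_n = 0$ together with the (W2)-recurrence and the Drinfeld-side vanishing $F_n w = 0$ for $n \leq 0$; finally produce the simple quotient $V(f)$ and propagate (W2) and the eigenvalue of $\phi^+(z)$ to every $w' \in \CW$. For $U^{\geq 0}w = \BC w$, each type of Drinfeld generator of $U^{\geq 0}$ acts on $w$ by a scalar or kills $w$: the $s_{11}^{(0)}$ and the $h_s$ ($s \neq 0$) are read off from (W1) via the exponential formulas for $s_{11}(z)$ and $t_{11}(z)$; the $\phi_n^{\pm}$ are central, hence scalar on the cyclic module $\CW$, and through the Ding--Frenkel expression of $\phi^{\pm}(z)$ in terms of the diagonal $s_{ii}, t_{ii}$ the scalars are identified as the Taylor coefficients of $f(z)^{-1}$ (this simultaneously gives $\phi^+(z) w = f(z)^{-1} w$); and $E_n w = 0$ for all $n \in \BZ$ follows from $s_{12}(z) w = t_{12}(z) w = 0$ via the same Gauss decomposition.

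For the spanning statement, super-commutativity makes $\CW = U^- w$ spanned by ordered monomials in the $F_n$ applied to $w$. Rewriting (W2) on $w$ using $s_{21}(z) = F^+(z) s_{11}(z)$ and $s_{11}(z) w = f(z) w$, it becomes the statement that $P(z) F^+(z) w$ is a polynomial in $z$ of degree $\leq d := \deg P$, equivalent to the linear recurrence $\sum_{j=0}^d p_j F_{k-j} w = 0$ for all $k > d$; this reduces every $F_k w$ with $k > d$ to a combination of $F_1 w, \ldots, F_d w$. The vanishing $F_n w = 0$ for $n \leq 0$ is the delicate step, and constitutes the main obstacle of the proof: I would import it from the original Drinfeld-side definition of Weyl modules in \cite{Z1} (where it is built in) and translate through the Ding--Frenkel isomorphism of \cite[Theorem 3.12]{Z2}; equivalently, one shows that (W1) combined with the RTT-relations between $S(z)$ and $T(z)$ in Definition~\ref{def: quantum affine superalgebras} forces $t_{21}(z) w = 0$, whence $F^-(z) w = 0$ via the Gauss decomposition $t_{21}(z) = F^-(z) t_{11}(z)$.

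For the simple quotient, I would verify (W1) (by definition of $V(f)$) and (W2) on $V(f)$ by writing $f = \prod_i c_i \frac{1-za_i}{1-za_ic_i^2}$ and using the tensor factorization $V(f) \cong \bigotimes_i V\bigl(c_i \frac{1-za_i}{1-za_ic_i^2}\bigr)$ from \cite[Theorem 5.2]{Z2}: check (W2) with the minimal polynomial $P_i = 1-za_i$ on each $2$-dimensional factor, then combine via the coproduct $\Delta F^+(z) = \phi^+(z) \otimes F^+(z) + F^+(z) \otimes 1$ and the group-like action of $\phi^+(z)$ on the highest weight vectors. The universal property then yields a surjection $\CW(f;P) \twoheadrightarrow V(f)$. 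Finally, for the global statements on $w' \in \CW$: centrality of $\phi^+(z)$ plus cyclicity give $\phi^+(z) w' = f(z)^{-1} w'$ at once. (W2) on $w' = F_{n_1}\cdots F_{n_s} w$ is obtained by induction on $s$, using the super-commutativity $F^+(z) F_{n_j} = -F_{n_j} F^+(z)$ and the relation $[h_s, F_n] = \frac{1-q^{2s}}{s(q-q^{-1})} F_{n+s}$ to commute $s_{11}(z)$ past the $F_{n_j}$, reducing to the (W2)-relation on $w$ already established.
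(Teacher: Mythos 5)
Your proposal follows the same skeleton as the paper's proof: verify (W1)--(W2) on the highest $\ell$-weight vector of $V(f)$ via the tensor factorization $V(f)\cong\otimes_i V(f_i)$ and the coproducts of $F^+(z)$ and $\phi^+(z)$, deduce that $V(f)$ is a quotient by universality, use the triangular decomposition $U_q(\Gaff)=U^-U^{\geq 0}$ with $U^{\geq 0}w=\BC w$, and invoke centrality of $\phi^+(z)$. The inductive propagation of (W2) to all of $\CW$ via $F^+(z)F_n=-F_nF^+(z)$ is a reasonable filling-in of what the paper only alludes to.

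The one place your proposal goes wrong is the step you yourself flag as delicate: you assert that $F_nw=0$ for $n\leq 0$, and that this should follow from $t_{21}(z)w=0$. Neither is true. Relation (W1) constrains $t_{11},t_{12},t_{22}$ but says nothing about $t_{21}(z)w$, and the RTT cross-relations do not force it to vanish; in fact, already in the two-dimensional simple quotient $V(c\tfrac{1-za}{1-zac^2})=\ev^*_{a'}L(\lambda)$ one computes $t_{21}(z)x=(t_{21}-z^{-1}a'^{-1}s_{21})x\neq 0$ on the highest $\ell$-weight vector, so the negative modes $F_nx$ ($n\leq 0$) are generically non-zero there and hence non-zero in $\CW(f;P)$ (which surjects onto $V(f)$). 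What the spanning statement actually requires is the weaker assertion that $F_nw$ for $n\leq 0$ lies in the span of the $F_mw$ with $m>0$; this is not a vanishing. Your primary suggestion -- importing this from the Drinfeld-side definition of Weyl modules in \cite{Z1}, translated through the Ding--Frenkel isomorphism -- is what the paper's footnote indicates and is the right route, but the ``equivalent'' derivation via $t_{21}(z)w=0$ would fail. The paper's own proof is also terse here (it merely invokes the triangular decomposition and the displayed Drinfeld relations), so this is a point where both treatments leave work to the reader; but the concrete claim you substitute in is false rather than merely incomplete.
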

\begin{proof}
Let $V(f) \cong \otimes_{i=1}^lV(f_i)$ be a decomposition with $f_i = c_i \frac{1-za_i}{1-za_ic_i^2}$ for $1 \leq i \leq l$. Then $P(z)$ is divisible by $\prod_{i=1}^l (1-za_i)$. Let $x_i \in V(f_i)$ be a highest $\ell$-weight vector. Set $x := \otimes_{i=1}^l x_i$. From the coproduct of $F^+(z)$ and $\phi^+(z)$ we get 
\begin{displaymath}
F^+(z) x = \sum_{j=1}^l (\otimes_{i<j} \phi^+(z)x_i) \otimes F^+(z) x_j \otimes (\otimes_{i>j} x_i).
\end{displaymath}
In view of the explicit construction of $V(f_i)$ in \cite[\S 5]{Z4}, both $(1-za_i) \phi^+(z) x_i = c_i^{-1} (1-za_ic_i^2) x_i$ and $(1-za_i)F^+(z) x_i$ are polynomials of degree 1. It follows that $\prod_{i=1}^l (1-za_i) F^+(z) x$ is a polynomial of degree $\leq l$, implying (W2) for $x \in V(f)$. So $V(f)$ is a quotient of $\CW(f;P)$. Observe from \cite[\S 2]{Z4} that for the highest $\ell$-weight vector $x$ we have $\phi^+(z) x = s_{22}(z)s_{11}(z)^{-1}x \in \BC[[z]]x$. The remaining statements then come from $U_q(\Gaff) = U^- U^{\geq 0}$ and from the commuting relations of Drinfeld generators listed above.
\end{proof}
\begin{prop}   \label{prop: simple vs Weyl gl(1,1)}
Let $(f,P), (g,Q) \in \CR_1$. If the polynomials $\frac{P}{f}$ and $Q$ are co-prime, then $\CW(f;P) \otimes \CW(g;Q)$ is of highest $\ell$-weight and is a quotient of $\CW(fg;PQ)$.
\end{prop}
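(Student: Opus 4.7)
The plan is to first verify that $w := w_1 \otimes w_2$ (where $w_i$ denotes the distinguished generator of the respective Weyl module) satisfies the defining relations (W1)--(W2) of $\CW(fg;PQ)$, and then to show that the submodule $N := U_q(\Gaff)\cdot w \subseteq \CW(f;P)\otimes\CW(g;Q)$ exhausts the tensor product. For (W1), coproduct computations are routine; for example $\Delta s_{12}(z) = s_{11}(z)\otimes s_{12}(z) + s_{12}(z)\otimes s_{22}(z)$ gives $s_{12}(z)w = 0$ from $s_{12}(z)w_i = 0$, and similarly one obtains $s_{11}(z)w = f(z)g(z)w$, $s_{22}(z)w = w$, together with the analogues for $t_{ij}(z)$. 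For (W2), the coproduct $\Delta s_{21}(z) = s_{21}(z)\otimes s_{11}(z) + s_{22}(z)\otimes s_{21}(z)$ gives
\[ \tfrac{P(z)Q(z)}{f(z)g(z)}\, s_{21}(z) w \;=\; Q(z)\, A_1(z) \otimes w_2 \;+\; \tfrac{P(z)}{f(z)}\, w_1 \otimes A_2(z), \]
where $A_1(z) := \tfrac{P(z)}{f(z)}s_{21}(z)w_1$ and $A_2(z) := \tfrac{Q(z)}{g(z)}s_{21}(z)w_2$ are polynomials of degrees $\leq\deg P$ and $\leq\deg Q$ respectively by Lemma~\ref{lem: Weyl modules gl(1,1)}. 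Thus the universal property of Weyl modules yields a surjection $\CW(fg;PQ) \twoheadrightarrow N$.

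The main step is the following iteration lemma: if $u\otimes v \in N$ with $u = u_1 w_1$, $v = v_1 w_2$ for $u_1 = F_{n_1}\cdots F_{n_s}$, $v_1 = F_{m_1}\cdots F_{m_t}$, then $F_n u\otimes v$ and $u\otimes F_m v$ lie in $N$ for every $n,m\geq 1$. Iterating this claim from $w \in N$ yields every pure tensor $F_{n_1}\cdots F_{n_s}w_1 \otimes F_{m_1}\cdots F_{m_t}w_2 \in N$, and by Lemma~\ref{lem: Weyl modules gl(1,1)} such tensors span $\CW(f;P)\otimes\CW(g;Q)$, giving the proposition. To prove the lemma, apply $F^+(z)$ via $\Delta F^+(z) = \phi^+(z)\otimes F^+(z) + F^+(z)\otimes 1$. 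The centrality of $\phi^+(z)$ (acting by $f^{-1}$ and $g^{-1}$ on the two Weyl modules) and the anticommutativity $F^+(z)F_n = -F_n F^+(z)$, which lets $u_1$ factor out of $F^+(z)u = (-1)^s u_1 F^+(z)w_1$, give $P(z)F^+(z)u = (-1)^s u_1 A_1(z)$ and $Q(z)F^+(z)v = (-1)^t v_1 A_2(z)$, both polynomial in $z$. Multiplying the coproduct identity by $P(z)Q(z)$ produces
\[ X(z) \;:=\; P(z)Q(z)\,F^+(z)(u\otimes v) \;=\; \epsilon_1\,\tfrac{P(z)}{f(z)}\,u\otimes v_1 A_2(z) \;+\; \epsilon_2\,Q(z)\,u_1 A_1(z)\otimes v, \]
with signs $\epsilon_i \in \{\pm 1\}$, and $X(z) \in N[z]$ since $u\otimes v \in N$.

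Now exploit coprimality. Writing $\alpha(z) := u\otimes v_1 A_2(z)$ and $\beta(z) := u_1 A_1(z)\otimes v$, these are module-valued polynomials of degrees $\leq\deg Q,\leq\deg P$, both vanishing at $z=0$ (since $A_i(0)=0$, coming from $s_{21}^{(0)}=0$). Because $\gcd(\tfrac{P}{f},Q)=1$, the kernel of the map $(\alpha,\beta)\mapsto \tfrac{P}{f}\alpha + Q\beta$ on polynomial pairs of those degrees consists of $(-Q\gamma, \tfrac{P}{f}\gamma)$ with $\gamma\in\CW(f;P)\otimes\CW(g;Q)$ constant; the constraint $\alpha(0)=0$, together with $Q(0)=1\neq 0$, forces $\gamma=0$. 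Hence the decomposition of $X(z)$ is unique, and the extraction of $(\alpha,\beta)$ from $X$ is coefficient-wise $\BC$-linear. Since $X$ has coefficients in $N$, so do $\alpha$ and $\beta$. In particular each $u_1 A_{1,k}\otimes v$ belongs to $N$; the coefficients $A_{1,k}$ for $k\geq 1$ span $\{F_n w_1 : n\geq 1\}$ (from $A_1(z) = -P(z)\sum_{n\geq 1}F_n w_1 z^n$ with $P(0)=1$ invertible), and the anticommutation $u_1 F_n w_1 = (-1)^s F_n u$ gives $F_n u\otimes v \in N$; symmetrically $u\otimes F_m v \in N$.

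The main technical obstacle is establishing the uniqueness of the Bezout-type decomposition $X = \tfrac{P}{f}\alpha + Q\beta$: with only the degree bounds $\deg\alpha\leq\deg Q$, $\deg\beta\leq\deg P$ one has a residual ambiguity by $(\alpha,\beta)\mapsto(\alpha-Q\gamma,\beta+\tfrac{P}{f}\gamma)$ for $\gamma\in\CW(f;P)\otimes\CW(g;Q)$, but the vanishing $A_i(0)=0$ combined with coprimality kills exactly this ambiguity. A secondary technical point is the polynomial-ness of $P(z)F^+(z)u$ for non-highest-weight $u$, which is handled cleanly by the anticommutation $F^+(z)F_n = -F_n F^+(z)$ that lets the $F_{n_i}$'s slide past $F^+(z)$, reducing everything to Lemma~\ref{lem: Weyl modules gl(1,1)} at the highest-weight vector.
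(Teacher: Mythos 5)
Your proof is correct and takes essentially the same approach as the paper: apply the coproduct of $F^+(z)$ to $u\otimes v$, clear denominators by $P(z)Q(z)$, and exploit the coprimality of $\tfrac{P}{f}$ and $Q$ together with degree bounds and vanishing at $z=0$ to recover each summand from the polynomial $X(z)$, hence conclude $F_n u\otimes v, u\otimes F_m v\in N$ and iterate. Your Bezout-kernel phrasing of the linear-algebra step is equivalent to the paper's observation that $\{z^iQ,\, z^jT\}$ is linearly independent, and your anticommutation trick for sliding $F^+(z)$ past $u_1$ is an explicit rendering of what Lemma~\ref{lem: Weyl modules gl(1,1)} already records for arbitrary $w'\in\CW$.
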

\begin{proof}
Let $\deg P = l$ and $\deg Q = u$. Let $x' \in \CW(f;P)$ and $y' \in \CW(g;Q)$ be homogeneous vectors. From the above lemma, $P(z) F^+(z) v' = \sum_{i=1}^l z^i x_i$ and $Q(z) F^+(z) y' = \sum_{j=1}^u z^j y_j$ for certain $x_i \in \CW(f;P)$ and $y_j \in \CW(g;Q)$.
\begin{align*}
F^+(z) (x'\otimes y') &= F^+(z) x' \otimes y' + (-1)^{|x'|} \phi^+(z) x' \otimes F^+(z) y' \\
&= \frac{1}{P(z)Q(z)} (Q(z) \sum_{i=1}^l z^i x_i \otimes y' + (-1)^{|x'|} \frac{P(z)}{f(z)} \sum_{j=1}^u z^j x' \otimes y_j).
\end{align*}
$P(z)Q(z)F^+(z) (x' \otimes y')$ is a polynomial of degree $\leq l+u$. Introduce $T(z) := (-1)^{|x'|} \frac{P(z)}{f(z)}$; it is a polynomial of degree $l$. Since $Q(z)$ and $T(z)$ are co-prime, the polynomials $z^i Q(z), z^j T(z)$ with $1\leq i \leq l$ and $1\leq j \leq u$ are linearly independent, and the $x_i \otimes y', x' \otimes y_j$ are in the subspace spanned by the coefficients of $P(z)Q(z)F^+(z) (x'\otimes y')$.

We have proved that $F_s x' \otimes y', x' \otimes F_sy' \in U^- (x'\otimes y')$ for all $s \in \BZ_{>0}$ and $x'\otimes y' \in \CW(f;P)\otimes \CW(g;Q)$. This implies by induction on $s,t \in \BZ_{\geq 0}$ that 
$$F_{n_1}F_{n_2}\cdots F_{n_s} x' \otimes F_{m_1}F_{m_2}\cdots F_{m_t} y' \in U^-(x'\otimes y')\quad \mathrm{for}\ n_1,\cdots,n_s,m_1,\cdots,m_t \in \BZ_{>0}.$$
Take $x',y'$ to be the highest $\ell$-weight generators of $\CW(f;P)$ and $\CW(g;Q)$. From the above lemma we see that 
$\CW(f;P)\otimes \CW(g;Q) = U^-(x'\otimes y')$. Moreover, $x'\otimes y'$ satisfies the conditions (W1)--(W2) in the definition of $\CW(fg;PQ)$.
\end{proof}
\section{Cyclicity of tensor products}   \label{sec: cyclicity} 
In this section we provide sufficient conditions for a tensor product of fundamental modules to be of highest $\ell$-weight, improving previously established ones in \cite{Z2}.

The main result of this section is as follows. For $r_1,r_2 \in \BZ_{\geq 0}$, set 
$$\Sigma(r_1,r_2) := \{q^{2l}\in \BC\ |\  r_2- \min(r_1,r_2) < l \leq r_2 \}.$$
\begin{theorem}   \label{thm: main result}
Let $k,l \in \BZ_{>0}$. Let $1 \leq r_1,r_2,\cdots,r_k \leq M$ and $1 \leq s_1,s_2,\cdots,s_l \leq N$. Let $a_1,a_2,\cdots,a_k, b_1,b_2,\cdots,b_l \in \BC^{\times}$. The $U_q(\Gaff)$-module $(\otimes_{i=1}^k V_{r_i,a_i}^+) \otimes (\otimes_{j=1}^l V_{s_j,b_j}^-) =: S$ is of highest $\ell$-weight if the following three conditions are satisfied:
\begin{itemize}
\item[(C1)] $\frac{a_j}{a_i} \notin \Sigma(r_i,r_j)$ for $1 \leq i < j \leq k$;
\item[(C2)] $\frac{b_i}{b_j} \notin \Sigma(s_i,s_j)$ for $1 \leq i < j \leq l$;
\item[(C3)] $a_i q^{-2r_i} \notin b_j q^2$ for $1\leq i\leq k$ and $1 \leq j \leq l$.
\end{itemize}
\end{theorem}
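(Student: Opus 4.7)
The plan is to combine Chari's reduction method (from \cite{Chari}, adapted for the super case in \cite{Z2}) with the Weyl-module viewpoint developed in Section~\ref{sec: Weyl modules}. By the coproduct formula \eqref{for: coproduct for quantum affine superalgebra S}, the tensor product of the highest $\ell$-weight vectors of the individual fundamental factors produces a highest $\ell$-weight vector $v \in S$; the task is to show $U_q(\Gaff)\,v = S$. Since the proof in \cite{Z2} already treats the case when all $(r_i,\varepsilon_i)$ coincide, the conceptual goal here is to extract from that proof the purely combinatorial data carried over into the $\mathfrak{gl}(1,1)$-reduction, and to check that the reduction still produces ``Weyl-module-like'' restrictions when the signatures vary.

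First I would set up the reduction. Following Chari, for each positive root $\alpha$ of $\Glie = \mathfrak{gl}(M,N)$ one embeds a copy of $U_q(\widehat{\mathfrak{gl}(1,1)})$ into $U_q(\Gaff)$ associated with that root (using the odd root datum and conjugation by braid-type operators, as in \cite[\S 5]{Z2}). Cyclicity of $S$ is then deduced root-by-root: one shows that applying each such subalgebra to $v$ already fills out the corresponding root subspace, from which $U_q(\Gaff)\,v = S$ follows by an induction on weight. The key reusable mechanism is Lemma~\ref{lem: cyclicity Chari}, which legitimizes swapping the positions of highest-$\ell$-weight and lowest-$\ell$-weight tensor factors without destroying the cyclic vector --- this is precisely why the theorem is stated with all positive factors on the left and all negative ones on the right.

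The second step, which is the essential new input, is to identify the restrictions of the $V_{r_i,a_i}^+$ and $V_{s_j,b_j}^-$ to each embedded $U_q(\widehat{\mathfrak{gl}(1,1)})$ as Weyl modules in the sense of Section~\ref{sec: Weyl modules}; this is Lemma~\ref{lem: reduction Weyl gl(1,1)}. Explicitly, each restriction will appear as some $\CW(f;P) \in \CR_1$, where $f$ and $P$ are polynomials whose roots form geometric progressions in $a_i$ (or $b_j$) with ratio $q^{\pm 2}$ and length dictated by $r_i$ (or $s_j$). One then appeals to Proposition~\ref{prop: simple vs Weyl gl(1,1)}: a tensor product $\CW(f;P) \otimes \CW(g;Q)$ is of highest $\ell$-weight as soon as $P/f$ and $Q$ are coprime, and iterating this along the ordered tensor product reduces cyclicity of the restriction to a purely arithmetic coprimality check.

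The core verification will be that, under (C1)--(C3), the polynomials produced by Lemma~\ref{lem: reduction Weyl gl(1,1)} are indeed pairwise coprime in the sense required. The sets $\Sigma(r_i,r_j)$ record exactly the ratios $a_j/a_i$ for which two length-$r_i$ and length-$r_j$ geometric progressions in $q^2$ overlap at at least one element, matching the zeroes of the $(+,+)$ denominator of Theorem~\ref{thm: even even poles}; similarly for (C2), and (C3) is the single ratio at which the $(+,-)$ denominator of Theorem~\ref{thm: pole even odd} vanishes. The main obstacle I anticipate is the third step: ensuring that the Weyl parameters $(f,P)$ produced by restriction across the $+/-$ boundary (where positive and negative fundamentals sit respectively in the categories $\mathcal{O}$ and $\mathcal{O}^\ast$) interact in a controlled way, so that the coprimality criterion of Proposition~\ref{prop: simple vs Weyl gl(1,1)} reduces to (C3) and not to a strictly stronger condition. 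This is presumably the reason for the careful ordering of factors in the statement, and for the use of Lemma~\ref{lem: cyclicity Chari} to permute highest and lowest $\ell$-weight vectors freely during the reduction.
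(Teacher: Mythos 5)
Your proposal correctly identifies the two genuinely new technical ingredients of the paper's proof: viewing the $\mathfrak{gl}(1,1)$-restriction as a Weyl module (Lemma~\ref{lem: reduction Weyl gl(1,1)}) and invoking the coprimality criterion of Proposition~\ref{prop: simple vs Weyl gl(1,1)}, together with the role of Lemma~\ref{lem: cyclicity Chari}. However there are two substantive problems.

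First, the reduction machinery you describe is not what the paper uses. There is no embedding of $U_q(\widehat{\mathfrak{gl}(1,1)})$ ``for each positive root'' and no conjugation by braid-type operators: the author explicitly flags the construction of such a braid-group action in the super case as an \emph{open problem} (see the introduction's discussion of \cite{GT}). Instead, the proof uses a nested induction on $(M+N, k+l)$ with exactly three fixed superalgebra morphisms: $g_1$ into the $(1,\kappa)$-corner copy of $U_q(\widehat{\mathfrak{gl}(1,1)})$, $g_2$ into $U_q(\widehat{\mathfrak{gl}(M-1,N)})$ (dropping the first index) and $g_3$ into $U_q(\widehat{\mathfrak{gl}(M,N-1)})$ (dropping the last index). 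One descends a weight at a time from the lowest $\ell$-weight vector via $g_1$ (Corollary~\ref{cor: conditions D E}), then uses the induction hypothesis applied to $g_2^\bullet K$ or $g_3^\bullet L$ (Lemmas~\ref{lem: reduction highest M}--\ref{lem: reduction highest N}), and finally Lemma~\ref{lem: cyclicity Chari} together with the $k+l$-induction. This is structurally different from a root-by-root argument.

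Second, and more seriously, the coprimality delivered by the Weyl module argument does \emph{not} reduce directly to conditions (C1)--(C3). What Corollary~\ref{cor: conditions D E} actually requires is, for instance, $a_1 \neq a_i q^{-2r_i}$ and $a_1 \neq b_j q^4$ for all $i,j$, or alternatively $b_l \neq a_i q^{-2r_i-2s_l}$ and $b_l \neq b_j q^{-2s_l}$ for all $i,j$. These are strictly stronger than (C1)--(C3), so your ``core verification'' step would fail if attempted directly. The paper fills this gap with a non-trivial combinatorial argument: one uses the $R$-matrix isomorphisms of Corollary~\ref{cor: simple fund modules two} to rearrange the tensor factors (Corollary~\ref{cor: rearrangement of tensor products of even fund}) so that the $a_i$ and $b_j$ satisfy a monotonicity condition in $q^{2\BZ}$, and then shows that if \emph{both} alternatives in Corollary~\ref{cor: conditions D E} fail even after rearrangement, one is forced into $r_i = s_{\tau(l)} = 1$ and $a_i q^{-2r_i} = b_t q^2$, contradicting (C3). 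This rearrangement-plus-contradiction step is precisely where (C1)--(C3) are shown to be sufficient, and it is absent from your proposal. Without it, the argument only establishes cyclicity under conditions noticeably stronger than the ones in the statement.
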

The proof of the theorem needs a series of reduction lemmas. In the following, for $V_1,V_2$ two $U_q(\Gaff)$-modules, we write $V_1 \simeq V_2$ if as $U_q(\Gaff)$-modules $V_1 \cong V_2 \otimes D$ for a one-dimensional $U_q(\Gaff)$-module $D$. Let $A,B$ be two Hopf superalgebras. Let $g: A \longrightarrow B$ be a morphism of superalgebras. (In general $g$ does not respect coproduct structures.) Let $V$ be a $B$-module and $W$ a sub-vector-superspace of $V$. Suppose that $W$ is stable by $g(A)$. The action of $g(A)$ endows $W$ with an $A$-module structure, denoted by $g^{\bullet} W$. 

From now on, set $U := U_q(\widehat{\mathfrak{gl}(1,1)})$. Let $g_1: U \longrightarrow U_q(\Gaff)$ be the superalgebra morphism defined by $\begin{pmatrix}
s_{11}(z) & s_{12}(z) \\
s_{21}(z) & s_{22}(z)
\end{pmatrix} \mapsto \begin{pmatrix}
s_{11}(z) & s_{1\kappa}(z) \\
s_{\kappa 1}(z) & s_{\kappa \kappa}(z)
\end{pmatrix} $ and similar formulas for the $t_{ij}(z)$. The following special property of {\it fundamental modules} is crucial in our reduction arguments. It was used implicitly in the proof of \cite[Theorem 4.2]{Z2}. We think of the trivial module also as fundamental modules: $\BC = V_{0,a}^{\pm}$.
\begin{lem}  \label{lem: crucial even fundamental}
Let $X_1$ (resp. $X_3$) be a positive (resp. negative) fundamental module with $x_1$ (resp. $x_3$) a lowest $\ell$-weight vector and let $Y_j := g_1(U) x_j \subseteq X_j$ for $j = 1,3$. Let $X_2$ be another $U_q(\Gaff)$-module and $Y_2$ a sub-vector-superspace of $X_2$ stable by $g_1(U)$. Then:
\begin{itemize}
\item[(1)] $Y_1 \otimes Y_2 \otimes Y_3$, as a subspace of the $U_q(\Gaff)$-module $X_1 \otimes X_2 \otimes X_3$, is stable by $g_1(U)$;
\item[(2)] The identity map $\mathrm{Id}: g_1^{\bullet}(Y_1 \otimes Y_2 \otimes Y_3) \cong g_1^{\bullet} Y_1 \otimes g_1^{\bullet} Y_2 \otimes g_1^{\bullet}Y_3$ is $U$-linear.
\end{itemize}
 \end{lem}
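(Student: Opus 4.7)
The plan is to show that for any RTT generator $x$ of $U$, the operators $\Delta^{(3)}_{U_q(\Gaff)}(g_1(x))$ and $(g_1^{\otimes 3}) \Delta^{(3)}_U(x)$ agree on $Y_1 \otimes Y_2 \otimes Y_3$. The second operator manifestly stabilizes $Y_1 \otimes Y_2 \otimes Y_3$ since each $Y_j$ is $g_1(U)$-stable, so the identity gives (1); and this same identity is precisely the $U$-linearity in (2). Because $U$ is generated as a superalgebra by the RTT generators, it suffices to verify the claim on these.

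The key ingredient is a weight annihilation statement. Write $g(1)=1$, $g(2)=\kappa$ for the index correspondence used by $g_1$. Since $g_1(U)$ is spanned by generators of weight $0$ or $\pm(\epsilon_1-\epsilon_\kappa)$, the weights of $Y_1$ lie in $s\epsilon_\kappa + \BZ(\epsilon_1-\epsilon_\kappa)$ and those of $Y_3$ in $-t\epsilon_1 + \BZ(\epsilon_1-\epsilon_\kappa)$. Theorem \ref{thm: BKK Schur-Weyl duality} and Lemma \ref{lem: negative fundamental module} describe the weights of $X_1=V_{s,a}^+$ as sums $\sum_{j=1}^s \epsilon_{i_j}$ with the column tableau constraint, and those of $X_3=V_{t,b}^-$ as the negatives of the corresponding row-tableau sums. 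Intersecting with the cosets above and using $1\leq M$, $\kappa>M$, the only possibilities are $s\epsilon_\kappa$ and $\epsilon_1+(s-1)\epsilon_\kappa$ in $Y_1$, and $-t\epsilon_1$ and $-(t-1)\epsilon_1-\epsilon_\kappa$ in $Y_3$. Consequently, for any intermediate index $1<k<\kappa$, any $a\in\{1,\kappa\}$, and any $m\geq 0$: the generators $s_{ak}^{(m)}, t_{ak}^{(m)}$ (of weight $\epsilon_a-\epsilon_k$) annihilate $Y_1$ because the resulting weight would carry coefficient $-1$ on $\epsilon_k$, impossible in $X_1$ whose weights have non-negative $\epsilon_i$-coefficients; dually, $s_{ka}^{(m)}, t_{ka}^{(m)}$ annihilate $Y_3$.

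Finally, expanding via \eqref{for: coproduct for quantum affine superalgebra S}, for $x=s_{ij}^{(n)}$ with $i,j\in\{1,2\}$ one obtains
\[
\Delta^{(3)}_{U_q(\Gaff)}(g_1(s_{ij}^{(n)})) = \sum_{\substack{m_1+m_2+m_3=n\\ K_1,K_2\in I}} \epsilon\, s_{g(i)K_1}^{(m_1)} \otimes s_{K_1 K_2}^{(m_2)} \otimes s_{K_2 g(j)}^{(m_3)},
\]
while $(g_1^{\otimes 3})\Delta^{(3)}_U(s_{ij}^{(n)})$ is the same sum restricted to $K_1,K_2\in\{1,\kappa\}$; the parity signs coincide because $|g(k)|_{U_q(\Gaff)}=|k|_U$. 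The difference is supported on triples with $K_1$ or $K_2$ intermediate: in the first case the leading factor annihilates $y_1$ by the annihilation statement, in the second the trailing factor annihilates $y_3$. The $t_{ij}^{(n)}$-case is identical. The main obstacle is really just the weight analysis in the middle paragraph, which relies on the rigidity of fundamental-module weight supports: outside the pair of allowed weights, there is no compatible target in $X_j$, forcing the intermediate-index operators to vanish on $Y_j$.
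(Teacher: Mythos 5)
Your proof is correct and takes essentially the same approach as the paper: both rest on the same weight-annihilation observation that $Y_1$ carries only the two weights $s\epsilon_\kappa$ and $\epsilon_1+(s-1)\epsilon_\kappa$ (and $Y_3$ only $-t\epsilon_1$ and $-(t-1)\epsilon_1-\epsilon_\kappa$), so the RTT generators $s_{ak}^{(m)},t_{ak}^{(m)}$ with $a\in\{1,\kappa\}$ and $k$ intermediate kill $Y_1$ (and dually for $Y_3$), which makes the extraneous terms in the iterated coproduct vanish. The only presentational difference is that you treat the three-fold coproduct directly whereas the paper first handles the two boundary cases ($X_3$ or $X_1$ trivial) and then observes the general case is a combination of the two.
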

\begin{proof}
Let us assume first that $X_3$ is the trivial module. We shall prove that $s_{ij}(z) Y_1 = 0 = t_{ij}(z) Y_1$ whenever $i \in \{1,\kappa\}$ and $j \notin \{1,\kappa\}$; this will imply (1) and that the operators $g_1^{\otimes 2} (\Delta_U (y))$ and $\Delta_{U_q(\Gaff)} (g_1(y))$ on $Y_1 \otimes Y_2$ are identical for $y$ an arbitrary RTT generator of $U$, which proves (2). Let $X_1 = V_{s,a}^+$ with $1 \leq s \leq M$. By Lemma \ref{lem: fundamental modules} and Theorem \ref{thm: BKK Schur-Weyl duality}, $Y_1$ is two-dimensional, and its weights are $\lambda_1 := s \epsilon_{\kappa}, \lambda_2 := \epsilon_1 + (s-1) \epsilon_{\kappa}$. Let $u \in Y_1$ be of weight $\lambda_k$ with $k = 1,2$. Then $s_{ij}(z) u$ and $t_{ij}(z) u$ are of weight $\lambda_k + \epsilon_i - \epsilon_j$, which is not a weight of $V_{s,a}^+$ by Theorem \ref{thm: BKK Schur-Weyl duality}. So $s_{ij}(z) u = 0 = t_{ij}(z) u$, as desired.

Secondly let us assume that $X_1$ is the trivial module. We prove that $s_{ij}(z) Y_3 = 0 = t_{ij}(z) Y_3$ whenever $j \in \{1,\kappa\}$ and $i \notin \{1,\kappa\}$; this will also imply (2). Let $X_3 = V_{r,a}^-$ with $1\leq r \leq N$. The proof of Lemma \ref{lem: negative fundamental module} and Theorem \ref{thm: BKK Schur-Weyl duality} show that: $Y_3$ is two-dimensional with weights $\mu_1 := -r\epsilon_1, \mu_2 := -(r-1)\epsilon_1-\epsilon_{\kappa}$. Now $\mu_k+\epsilon_i-\epsilon_j$ is not a weight of $V_{r,a}^-$ for $k = 1,2$, leading to the desired result.

The general case is just a combination of the above two cases.
\end{proof}
Let us fix three distinguished vectors of a positive fundamental module $V$ as follows: $v^1$ is a highest $\ell$-weight vector; $v^3$ is a lowest $\ell$-weight vector; $v^2 = s_{1\kappa}^{(0)}v^3$. For a negative fundamental module $W$, the three vectors $w^1,w^2,w^3$ are defined in the same way. We shall be in the situation of Theorem \ref{thm: main result}, and add sub-indexes to emphasize the fundamental modules; for example $v_i^1 \in V_{r_i,a_i}^+$ and $w^3_j \in V_{s_j,b_j}^-$. The following lemma comes from the proof of Lemma \ref{lem: crucial even fundamental}. It is the reduction from $\mathfrak{gl}(M,N)$ to $\mathfrak{gl}(1,1)$.
\begin{lem}   \label{lem: reduction lowest gl(1,1)}
Set $W_i := g_1(U) v_i^3 \subseteq V_{r_i,a_i}^+$ and $W_j' := g_1(U) w_j^3 \subseteq V_{s_j,b_j}^-$. Then $W_i = \BC v_i^2 + \BC v_i^3$ and $W_j' = \BC w_j^2 + \BC w_j^3$. As $U$-modules
\begin{displaymath}
g_1^{\bullet} W_i \simeq V(q^{r_i} \frac{1-za_iq^{-2r_i-2}}{1-za_iq^{-2}}),\quad g_1^{\bullet}W_j' \simeq V(q^{-s_j}\frac{1-zb_jq^{2s_j}}{1-zb_j}),
\end{displaymath}
and $v_i^2,w_j^2$ (resp. $v_i^3,w_j^3$) are highest (resp. lowest) $\ell$-weight vectors.
\end{lem}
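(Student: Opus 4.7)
The plan is: identify $W_i$ as a two-dimensional subspace through weight considerations; verify that $v_i^2$ and $v_i^3$ are respectively highest and lowest $\ell$-weight for the induced $U$-action; compute the $\ell$-weight of $v_i^2$ via a tensor-product calculation inside $\bigotimes_{j=1}^{r_i}\BV(a_iq^{-2j})$; and derive the assertion on $W_j'$ from the positive case applied to $\Glie'=\mathfrak{gl}(N,M)$ through the Hopf isomorphism $f$ of \eqref{equ: from gl(M,N) to gl(N,M)}.

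The two-dimensionality of $W_i$ is forced by the proof of Lemma \ref{lem: crucial even fundamental} applied with $X_1=V_{r_i,a_i}^+$ and $x_1=v_i^3$: the weight argument there places $g_1(U)v_i^3$ inside $(V_{r_i,a_i}^+)_{r_i\epsilon_\kappa}\oplus(V_{r_i,a_i}^+)_{\epsilon_1+(r_i-1)\epsilon_\kappa}$, and Theorem \ref{thm: BKK Schur-Weyl duality} says each summand is one-dimensional, respectively containing $v_i^3$ and $v_i^2=s_{1\kappa}^{(0)}v_i^3$. Non-vanishing of $v_i^2$ follows from iterating the coproduct \eqref{for: coproduct for quantum affine superalgebra S} inside $\bigotimes_{j=1}^{r_i}\BV(a_iq^{-2j})$ with $v_i^3=v_\kappa^{\otimes r_i}$: using Example \ref{example: natural representation quantum superalgebra}, only the $r_i$ single-jump paths $(1,\ldots,1,\kappa,\ldots,\kappa)$ survive, giving a nontrivial linear combination of the linearly independent $v_\kappa^{\otimes k-1}\otimes v_1\otimes v_\kappa^{\otimes r_i-k}$ for $k=1,\ldots,r_i$.

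Since $v_i^3$ is lowest $\ell$-weight in $V_{r_i,a_i}^+$, it is annihilated by every $s_{kl}(z),t_{kl}(z)$ with $k>l$; specialising to $(k,l)=(\kappa,1)$ yields the lowest $\ell$-weight condition for $U$. Conversely, $s_{1\kappa}(z)v_i^2$ and $t_{1\kappa}(z)v_i^2$ lie in the weight space $2\epsilon_1+(r_i-2)\epsilon_\kappa$, which is empty in $V_{r_i,a_i}^+$ by Theorem \ref{thm: BKK Schur-Weyl duality} (two copies of $\epsilon_1$ in a semistandard tableau would force $1>M$), whence $v_i^2$ is highest $\ell$-weight. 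For the identification of the $\ell$-weight itself, the proof of Lemma \ref{lem: fundamental modules} records $\frac{s_{11}(z)}{s_{\kappa\kappa}(z)}v_i^3=q^{r_i}\frac{1-za_iq^{-2r_i}}{1-za_i}v_i^3$. The corresponding eigenvalue on $v_i^2$ is then computed by the same iterated-coproduct technique: using the explicit expression for $v_i^2$ above, $s_{11}(z)$ and $s_{\kappa\kappa}(z)$ act on each monomial $v_\kappa^{\otimes k-1}\otimes v_1\otimes v_\kappa^{\otimes r_i-k}$ essentially diagonally, with off-diagonal corrections from paths through $s_{1\kappa}\otimes s_{\kappa 1}$; assembling the contributions and invoking the eigenvector status just established yields ratio $q^{r_i}(1-za_iq^{-2r_i-2})/(1-za_iq^{-2})$ on $v_i^2$. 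Simplicity of the two-dimensional $U$-module $g_1^\bullet W_i$ is guaranteed by $v_i^2=s_{1\kappa}^{(0)}v_i^3\neq 0$ (which rules out a split $\BC v_i^2\oplus\BC v_i^3$) combined with $W_i=g_1(U)v_i^3$ (which rules out $v_i^2$ generating only a proper submodule). The classification reviewed in Section 2.3, adapted to $U$, then gives $g_1^\bullet W_i\simeq V(q^{r_i}(1-za_iq^{-2r_i-2})/(1-za_iq^{-2}))$.

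For $W_j'$, combine the isomorphism $f$ of \eqref{equ: from gl(M,N) to gl(N,M)} with the translation $\vartheta_{s_j}$ from the proof of Lemma \ref{lem: negative fundamental module}: the $U_q(\Gaff)$-module $V_{s_j,b_j}^-$ corresponds, up to $\BC_{\odd}^{\otimes s_j}$, to the positive fundamental $U_q(\Gafft)$-module $V_{s_j,b_jq^{2s_j+2}}'^+$. Applying the positive analysis inside $U_q(\Gafft)$ and transferring back through $f$ --- noting that $f$ interchanges the corner Cartans ($s_{11}'\leftrightarrow s_{\kappa\kappa}$, $s_{\kappa'\kappa'}'\leftrightarrow s_{11}$), hence swaps the roles of highest and lowest $\ell$-weight vectors and inverts the ratio $s_{11}/s_{\kappa\kappa}$ --- yields $g_1^\bullet W_j'\simeq V(q^{-s_j}(1-zb_jq^{2s_j})/(1-zb_j))$ with $w_j^2$ highest and $w_j^3$ lowest. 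The main obstacle is the eigenvalue computation on $v_i^2$: the coproduct expansion is routine but sign-heavy, and verifying that the resulting sum factors as the claimed rational function times $v_i^2$ takes some bookkeeping. An alternative avoiding the tensor decomposition would be to extract the commutator $[s_{11}(z),s_{1\kappa}^{(0)}]$ from the RTT relations of Definition \ref{def: quantum affine superalgebras} at $w=0$, apply it to $v_i^3$, and use the already-known actions of $s_{11}(z)$ and $s_{1\kappa}(z)$ on $v_i^3$ to read off $s_{11}(z)v_i^2$; and similarly for $s_{\kappa\kappa}(z)$.
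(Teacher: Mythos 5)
Your overall skeleton matches the paper's: use the weight argument from the proof of Lemma \ref{lem: crucial even fundamental} to see $W_i$ is two-dimensional with $v_i^2, v_i^3$ spanning, observe that $v_i^3$ (resp. $v_i^2$) sits at the lowest (resp. highest) weight of $V_{r_i,a_i}^+$ hence is a lowest (resp. highest) $\ell$-weight vector for $g_1(U)$, and then identify the $\ell$-weight. Where you diverge is the eigenvalue computation on $v_i^2$, and there you make the problem much harder than it is. The paper does \emph{not} expand the coproduct inside $\bigotimes_j \BV(a_iq^{-2j})$; it first replaces $V_{r,a}^+$ by $\ev_{aq^{-2r}}^*V_r^+$ and $V_{s,b}^-$ by $\ev_{bq^2}^*V_s^-$ using Lemmas \ref{lem: fundamental modules}--\ref{lem: negative fundamental module}. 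Once that is done, $s_{ii}(z)$ acts on any $U_q(\Glie)$-weight vector $u$ of weight $\lambda$ simply as $(s_{ii}-zc\,t_{ii})u = (q^{(\lambda,\epsilon_i)} - zc\,q^{-(\lambda,\epsilon_i)})u$ for the appropriate evaluation point $c$, so the eigenvalue of $s_{11}(z)s_{\kappa\kappa}(z)^{-1}$ on $v_i^2$ (weight $\epsilon_1+(r_i-1)\epsilon_\kappa$) is read off in a single line: $\frac{q-za_iq^{-2r_i-1}}{q^{1-r_i}-za_iq^{-r_i-1}} = q^{r_i}\frac{1-za_iq^{-2r_i-2}}{1-za_iq^{-2}}$. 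That is the whole content; no iterated coproducts, no bookkeeping of signs, no $[s_{11}(z),s_{1\kappa}^{(0)}]$ from the RTT relations. You actually cite the eigenvalue on $v_i^3$ from this very mechanism but then inexplicably switch to the raw tensor calculation for $v_i^2$, which is the step you yourself flag as the "main obstacle". Your derivation of the negative case from the positive one via the Hopf isomorphism $f$ and $\vartheta_{s_j}$ is a legitimate alternative (the paper instead writes out the negative case directly and leaves the positive one implicit), and if you do go that route you should check that $f$ conjugates $g_1$ into the corresponding corner embedding, which holds since $\widehat{1}=\kappa$, $\widehat{\kappa}=1$. In sum: the proposal is sound, but the evaluation-module shortcut, which is exactly what Lemmas \ref{lem: fundamental modules}--\ref{lem: negative fundamental module} were set up to provide, eliminates the hardest step you describe.
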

\begin{proof}
We prove the negative case. Replace $V_{s,b}^-$ by $\ev_{bq^2}^*V_s^-$ according to Lemma \ref{lem: negative fundamental module}. From the second part of the proof of Lemma \ref{lem: crucial even fundamental} we see that: $w^2,w^3$ are of weights $(1-s)\epsilon_1-\epsilon_{\kappa}, -s\epsilon_1$ respectively; $W' = \BC w^2+\BC w^2$ and $w^2$ is a highest $\ell$-weight vector of $g_1^{\bullet}W'$; 
$$ s_{11}(z)s_{\kappa \kappa}(z)^{-1} w^2 = \frac{q^{1-s}-zbq^{2+s-1}}{q-zbq^{2-1}} w^2 = q^{-s}\frac{1-zbq^{2s}}{1-zb} w^2 = t_{11}(z)t_{\kappa \kappa}(z)^{-1} w^2.$$
This proves the second isomorphism in the lemma.
\end{proof}
Let $U_2 := U_q(\widehat{\mathfrak{gl}(M-1,N)})$ and $g_2: U_2 \longrightarrow U_q(\Gaff)$ be the superalgebra morphism defined by $s_{ij}(z) \mapsto s_{i+1,j+1}(z)$ and similar formula for the $t_{ij}(z)$. Let $V_{r,a}^{2\pm}$ denote the positive/negative fundamental modules over $U_2$, with $(+,1 \leq r < M)$ or $(-,1 \leq r \leq N)$. 
\begin{lem}   \label{lem: reduction highest M}
Set $K_i := g_2(U_2) v_i^1 \subseteq V_{r_i,a_i}^+$ and $K_j' := g_2(U_2) w_j^1 \subseteq V_{s_j,b_j}^-$. Then $v_i^2 \in K_i$ and 
\begin{displaymath}
g_2^{\bullet} K_i \simeq V_{r_i-1,a_iq^{-2}}^{2+}, \quad g_2^{\bullet} K_j' \simeq V_{s_j,b_j}^{2-}
\end{displaymath}
as $U_2$-modules, with $v_i^1$ and $v_i^2$ being highest and lowest $\ell$-weight vectors of $g_2^{\bullet}K_i$ respectively. Furthermore, $K := (\otimes_{i=1}^k K_i) \otimes (\otimes_{j=1}^l K_j')$ is stable by $g_2(U_2)$, and and identity map $\mathrm{Id}: g_2^{\bullet}(K) \cong (\otimes_{i=1}^k g_2^{\bullet}K_i) \otimes (\otimes_{j=1}^lg_2^{\bullet} K_j')$ is $U_2$-linear.
\end{lem}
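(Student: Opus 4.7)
The plan is to adapt Lemma~\ref{lem: crucial even fundamental}, with $g_2$ in the role of $g_1$ and the single index $1$ in place of $\{1,\kappa\}$. The weight invariant replacing the extremality of $\{1,\kappa\}$ is the $\epsilon_1$-component: by Theorem~\ref{thm: BKK Schur-Weyl duality} and Lemma~\ref{lem: negative fundamental module}, this component lies in $\{0,1\}$ on $V_{r_i,a_i}^+$ and in $\{0,-1,\ldots,-s_j\}$ on $V_{s_j,b_j}^-$, with the extremum attained at $v_i^1$ (value $1$) and at $w_j^1$ (value $0$). Since every RTT generator of $g_2(U_2)$ has weight $\epsilon_i-\epsilon_j$ with $i,j \geq 2$, this component is preserved; consequently every vector of $K_i$ has $\epsilon_1$-component $1$ and every vector of $K_j'$ has $\epsilon_1$-component $0$.

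To identify $g_2^\bullet K_i$, I would first observe that $v_i^1$ is a highest $\ell$-weight vector for $g_2(U_2)$: it is annihilated by all $s_{j+1,k+1}^{(n)}$ with $j<k$ because it is highest for the full algebra. Its $U_2$-$\ell$-weight is obtained by reading the components of $\Pi(V_{r_i,a_i}^+)$ at positions $2,\ldots,\kappa-1$; a direct calculation based on Lemma~\ref{lem: fundamental modules} matches it with the $U_2$-$\ell$-weight of $V_{r_i-1,a_iq^{-2}}^{2+}$. The highest $\ell$-weight classification of \S\ref{subsec: classification} together with simplicity of $V_{r_i-1,a_iq^{-2}}^{2+}$ (Proposition~\ref{prop: simplicity of fundamental modules} applied inductively at rank $M-1$) therefore produces a surjection of $g_2^\bullet K_i$ onto $V_{r_i-1,a_iq^{-2}}^{2+}$ up to a one-dimensional twist. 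A dimension count on the $\epsilon_1$-component-$1$ subspace of $V_{r_i,a_i}^+$ --- whose basis is indexed by the semi-standard tableaux of Theorem~\ref{thm: BKK Schur-Weyl duality} with a unique entry equal to $1$, i.e.\ in natural bijection with the basis of $V_{r_i-1}^{2+}$ --- promotes this surjection to an isomorphism. Finally $v_i^2$ sits in the $\epsilon_1$-component-$1$ subspace with exactly the weight of the lowest $\ell$-weight vector of $V_{r_i-1,a_iq^{-2}}^{2+}$, proving $v_i^2 \in K_i$ and identifying it as the lowest $\ell$-weight vector of $g_2^\bullet K_i$. The negative case is handled symmetrically via Lemma~\ref{lem: negative fundamental module}.

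For the stability of $K$ under $g_2(U_2)$ and the $U_2$-linearity of the identity, I would follow Lemma~\ref{lem: crucial even fundamental}(2). The discrepancy between $(g_2 \otimes g_2)\Delta_{U_2}(s_{ij}^{(n)})$ and $\Delta_{U_q(\Gaff)}(s_{i+1,j+1}^{(n)})$ reduces to the single summand $\sum_m \epsilon_{i+1,j+1,1}\, s_{i+1,1}^{(m)} \otimes s_{1,j+1}^{(n-m)}$, and similarly for the $t$-generators. Since $s_{1,k}^{(p)}$ with $k \geq 2$ has weight $\epsilon_1 - \epsilon_k$ and would raise the $\epsilon_1$-component, which is already maximal on each $K_i$ and $K_j'$ by the first paragraph, it annihilates every individual factor. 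An induction on the number of tensor factors, applied to the coproduct of $s_{1,k}^{(p)}$ itself, extends this to arbitrary iterated tensor products of $K_i$'s and $K_j'$'s: in the inductive step the summands with $k' \geq 2$ vanish on the right factor by the base case, while the $k'=1$ summand vanishes on the left factor by the inductive hypothesis. Hence the discrepancy annihilates $K$, yielding both the stability of $K$ and the $U_2$-linearity of the identity.

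The main obstacle is this last induction: the extra coproduct terms must vanish not just on individual $K_i$'s and $K_j'$'s but on their iterated tensor products. This hinges on the $\epsilon_1$-component being \emph{uniformly} extremal on every $K_i$ and $K_j'$ --- a rigidity property specific to fundamental modules, reflecting Theorem~\ref{thm: BKK Schur-Weyl duality} and Lemma~\ref{lem: negative fundamental module}. For general simple modules this rigidity fails, which is the core difficulty behind the more general tensor-product problem discussed in \S\ref{sec: discussion}.
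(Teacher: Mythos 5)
Your proposal is correct and supplies the details that the paper only gestures at (it states that Lemmas~\ref{lem: reduction highest M}--\ref{lem: reduction highest N} ``can be deduced from Theorem~\ref{thm: BKK Schur-Weyl duality} and Lemmas~\ref{lem: fundamental modules}--\ref{lem: negative fundamental module}''). Using the $\epsilon_1$-weight-component as a $g_2(U_2)$-invariant that is extremal on each $K_i$, $K_j'$, identifying $g_2^{\bullet}K_i$ through the $\ell$-weight classification plus the dimension count against the component-$1$ weight space, and killing the coproduct discrepancy because $s_{1,k}^{(p)}$ (and $t_{1,k}^{(p)}$), $k\geq 2$, raise the maximal $\epsilon_1$-component --- this is exactly the generalization of the argument in the proof of Lemma~\ref{lem: crucial even fundamental} that the paper has in mind.

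One small slip in the final induction: with the split $K_1\otimes(K_2\otimes\cdots\otimes K_n)$ and $\Delta(s_{1k})=\sum_{k'} \epsilon_{1kk'}\, s_{1k'}\otimes s_{k'k}$, the summands with $k'\geq 2$ die because $s_{1k'}$ raises the $\epsilon_1$-component on the \emph{left} factor (the base case), while the $k'=1$ summand dies because $s_{1k}$ annihilates the \emph{right} tail by the inductive hypothesis; you have the two attributions transposed. Also, you tacitly use $v_i^2\neq 0$ when declaring it the lowest $\ell$-weight vector; this is not re-argued but is implicit in the proof of Lemma~\ref{lem: crucial even fundamental}, where $W_i=\BC v_i^2+\BC v_i^3$ is two-dimensional, so it is acceptable to cite rather than reprove.
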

Let $U_3 := U_q(\widehat{\mathfrak{gl}(M,N-1)})$ and $g_3: U_3 \longrightarrow U_q(\Gaff)$ be the superalgebra morphism defined by $s_{ij}(z) \mapsto s_{ij}(z), t_{ij}(z) \mapsto t_{ij}(z)$. Let $V_{r,a}^{3\pm}$ denote the positive/negative fundamental modules over $U_3$, with $(+,1 \leq r \leq M)$ or $(-,1 \leq r < N)$. 
\begin{lem}   \label{lem: reduction highest N}
Set $L_i := g_3(U_3) v_i^1 \subseteq V_{r_i,a_i}^+$ and $L_j' := g_3(U_3) w_j^1 \subseteq V_{s_j,b_j}^-$. Then $w_j^2 \in L_j'$ and 
\begin{displaymath}
g_3^{\bullet} L_i \simeq V_{r_i,a_i}^{3+}, \quad g_3^{\bullet} L_j' \simeq V_{s_j-1,b_j}^{3-}
\end{displaymath}
as $U_3$-modules, with $w_j^1$ and $w_j^2$ being highest and lowest $\ell$-weight vectors of $g_3^{\bullet}L_j'$ respectively. Furthermore, $L := (\otimes_{i=1}^k L_i) \otimes (\otimes_{j=1}^l L_j')$ is stable by $g_3(U_3)$, and the identity map $\mathrm{Id}: g_3^{\bullet}(L) \cong (\otimes_{i=1}^k g_3^{\bullet}L_i) \otimes (\otimes_{j=1}^lg_3^{\bullet} L_j')$ is $U_3$-linear.
\end{lem}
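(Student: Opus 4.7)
The proof closely parallels that of Lemma~\ref{lem: reduction highest M}, with the roles of first and last indices interchanged. The pivotal weight observation is that $s_{p\kappa}(z)$ and $t_{p\kappa}(z)$ with $1\leq p\leq \kappa-1$ annihilate both $L_i$ and $L_j'$. Indeed, by Theorem~\ref{thm: BKK Schur-Weyl duality}, every weight of $L_i$ is a non-negative integer combination of $\epsilon_1,\ldots,\epsilon_{\kappa-1}$, and the application of these operators would produce a vector whose $\epsilon_\kappa$-coefficient is $-1$, impossible in $V_{r_i,a_i}^+$; every weight of $L_j'$ has $\epsilon_\kappa$-coefficient exactly $-1$ (that of $w_j^1$), so these operators would force coefficient $-2$, impossible in $V_{s_j,b_j}^-$ since $\kappa>M$ and the odd index $\kappa$ cannot repeat (cf.\ Example~\ref{example: tensor square}).

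For the positive side, $v_i^1=v^{(r_i)}$ uses only $v_1,\ldots,v_{r_i}$ with $r_i\leq M<\kappa$ and $g_3$ acts within $\bigoplus_{p=1}^{\kappa-1}\BC v_p$, so the fusion product $\otimes_{j=1}^{r_i}\BV(a_iq^{-2j})$ restricts cleanly to the analogous fusion product for the natural representation of $U_3$; applying Lemma~\ref{lem: fundamental modules} in the $U_3$-setting gives $g_3^\bullet L_i \simeq V_{r_i,a_i}^{3+}$ with no spectral-parameter shift. For the negative side, one transports the question through the isomorphism $f$ of \eqref{equ: from gl(M,N) to gl(N,M)} and the $\vartheta_{s_j}$ of Lemma~\ref{lem: negative fundamental module} into a $g_2$-problem for $\Glie'=\mathfrak{gl}(N,M)$: under $f$ the last index $\kappa$ of $\Glie$ becomes the first index of $\Glie'$, so $g_3$ intertwines with $g_2$ for $\Glie'$, and Lemma~\ref{lem: reduction highest M} applied to the positive fundamental module $V_{s_j,b_jq^{2s_j+2}}'^+$ of $U_q(\Gafft)$ returns $V_{s_j-1,b_jq^{2s_j}}'^{2+}$; inverting the analogous $\vartheta$ at the $U_3$-level yields $g_3^\bullet L_j' \simeq V_{s_j-1,b_j}^{3-}$, with the three parameter shifts $q^{2s_j+2}$, $q^{-2}$, $q^{-2s_j}$ telescoping to no net shift. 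The vector $w_j^2=s_{1\kappa}^{(0)}w_j^3$ is non-zero by Lemma~\ref{lem: reduction lowest gl(1,1)} and lies in the one-dimensional weight space of $V_{s_j,b_j}^-$ of weight $-(s_j-1)\epsilon_1-\epsilon_\kappa$, which under the above isomorphism is the lowest $U_3$-weight space of $g_3^\bullet L_j'$; hence $w_j^2\in L_j'$ is the lowest $\ell$-weight vector.

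For stability of $L$ and $U_3$-linearity of the identity map, expand the iterated coproduct $\Delta^{k+l-1}s_{ij}(z)$ with $i,j\leq\kappa-1$ via \eqref{for: coproduct for quantum affine superalgebra S} into a sum over intermediate indices $k_1,\ldots,k_{k+l-1}\in\{1,\ldots,\kappa\}$. The terms whose indices all lie in $\{1,\ldots,\kappa-1\}$ reproduce the $U_3$-coproduct of $s_{ij}(z)$ through $g_3$, whereas any term with some $k_m=\kappa$ contains a tensor factor of shape $s_{p\kappa}(z)$ applied to some $L_i$ or $L_j'$, and hence vanishes by the weight analysis; the same reasoning handles the $t_{ij}(z)$ generators. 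This simultaneously proves $g_3(U_3)$-stability of $L$ and that the identity intertwines the two $U_3$-actions.

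The principal technical hurdle is the negative-side identification together with the verification $w_j^2\in L_j'$, since unlike the positive case the highest $\ell$-weight vector $w_j^1$ carries a nontrivial $\epsilon_\kappa$-component that must be absorbed into a one-dimensional character twist; the $f$--$\vartheta$ detour gives a clean conceptual route at the price of careful spectral-parameter bookkeeping.
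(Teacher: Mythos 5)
Your proof is correct and fleshes out the one-line hint the paper gives (``deduced from Theorem~\ref{thm: BKK Schur-Weyl duality} and Lemmas~\ref{lem: fundamental modules}--\ref{lem: negative fundamental module}''). The core observations --- that the $\epsilon_\kappa$-coefficient of weights in $L_i$ (resp.\ $L_j'$) is frozen at $0$ (resp.\ $-1$), hence $s_{p\kappa}(z), t_{p\kappa}(z)$ with $p<\kappa$ annihilate both, and that in any term of the iterated coproduct of $s_{ij}(z), t_{ij}(z)$ (with $i,j<\kappa$) where some intermediate index equals $\kappa$, the \emph{first} such factor has the form $s_{p\kappa}(z)$ or $t_{p\kappa}(z)$ with $p<\kappa$ acting on an $L_i$ or $L_j'$ --- give both stability of $L$ under $g_3(U_3)$ and $U_3$-linearity of the identity, and this is exactly the mechanism behind the proof of Lemma~\ref{lem: crucial even fundamental}. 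For the positive identification $g_3^{\bullet}L_i\simeq V_{r_i,a_i}^{3+}$ you argue directly inside the fusion product, which is clean; for the negative side you detour through $f$, $\vartheta_{s_j}$ and Lemma~\ref{lem: reduction highest M}, which is a legitimate alternative to computing the highest $\ell$-weight of $w_j^1$ directly via Remark~\ref{rem: highest l weight of evaluation modules} (both routes give the same answer, and the telescoping of the three shifts $q^{2s_j+2}\cdot q^{-2}\cdot q^{-2s_j}=1$ checks out). The argument that $w_j^2\in L_j'$ --- nonvanishing of $w_j^2$ from Lemma~\ref{lem: reduction lowest gl(1,1)}, together with the one-dimensionality of the $-(s_j-1)\epsilon_1-\epsilon_\kappa$ weight space of $V_{s_j,b_j}^-$ forcing the lowest $U_3$-weight space of $g_3^\bullet L_j'$ to be $\BC w_j^2$ --- is correct. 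The only stylistic remark is that the $f$--$\vartheta$ route makes your proof of Lemma~\ref{lem: reduction highest N} depend on Lemma~\ref{lem: reduction highest M}, whereas the paper treats the two lemmas symmetrically; since your positive-side argument works mutatis mutandis for Lemma~\ref{lem: reduction highest M}, this creates no circularity, but it is worth noting that a direct weight/highest-$\ell$-weight computation on the negative side (as in the proof of Lemma~\ref{lem: reduction lowest gl(1,1)}) would keep the two lemmas independent.
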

Lemmas \ref{lem: reduction highest M} and \ref{lem: reduction highest N} can be deduced from Theorem \ref{thm: BKK Schur-Weyl duality} and Lemmas \ref{lem: fundamental modules}--\ref{lem: negative fundamental module}. We have used the above three reductions in \cite{Z2} to prove a weaker version of Theorem \ref{thm: main result}. The following lemma is new and is a {\it crucial} step in the proof of Theorem \ref{thm: main result}.
\begin{lem}  \label{lem: reduction Weyl gl(1,1)}
Set $W := g_1(U) ((\otimes_{i=1}^k v_i^1) \otimes (\otimes_{j=1}^{l}w_j^1)) \subseteq (\otimes_{i=1}^k V_{r_i,a_i}^+) \otimes (\otimes_{j=1}^l V_{s_j,b_j}^-)$. There exists a one-dimensional $U$-module $D$ making $W$ a quotient of 
\begin{displaymath}
D \otimes \CW(\prod_{i=1}^k \frac{q-za_iq^{-2r_i-1}}{1-za_iq^{-2r_i}} \times \prod_{j=1}^l \frac{1-zb_jq^2}{q-zb_jq}; \prod_{i=1}^k(1-za_iq^{-2r_i-2}) \times \prod_{j=1}^l (1-zb_jq^2)).
\end{displaymath}
\end{lem}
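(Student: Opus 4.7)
Set $v := (\otimes_{i=1}^k v_i^1) \otimes (\otimes_{j=1}^l w_j^1) \in X := (\otimes_i V_{r_i,a_i}^+) \otimes (\otimes_j V_{s_j,b_j}^-)$. Since each $v_i^1$ and $w_j^1$ is a highest $\ell$-weight vector of its respective $U_q(\Gaff)$-module, the coproduct formula \eqref{for: coproduct for quantum affine superalgebra S} makes $v$ a highest $\ell$-weight vector of $X$; pulled back along $g_1$, $v$ is then a highest $\ell$-weight vector for the $U$-action as well, so $s_{12}^U(z) v = 0 = t_{12}^U(z) v$ and $v$ is a common eigenvector of the $s_{ii}^U(z), t_{ii}^U(z)$. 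Denote $s_{11}(z) v = h(z) v$ and $s_{\kappa\kappa}(z) v = g(z) v$. Using Lemmas~\ref{lem: fundamental modules}--\ref{lem: negative fundamental module} together with Example~\ref{example: natural representation quantum superalgebra}, one computes the product formulas for $h(z)$ and $g(z)$ factor by factor and verifies $h(z)/g(z) = f(z)$, the rational function appearing in the statement.

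Let $D^{-1}$ be the parity-matched one-dimensional $U$-module on which $s_{ii}^U(z)$ acts as $g(z)^{-1}$, realised by a twist of the form \eqref{equ: automorphism power series} applied to a trivial module. In $D^{-1} \otimes g_1^\ast X$ the vector $d^{-1} \otimes v$ then satisfies condition (W1) with $s_{11}^U(z)$-eigenvalue $f(z)$. Since $s_{21}^U(z) d^{-1} = 0$, the $U$-coproduct produces
\[
(P(z)/f(z))\, s_{21}^U(z)(d^{-1}\otimes v) \;=\; (P(z)/h(z))\, d^{-1}\otimes s_{\kappa 1}(z) v.
\]
Granted condition (W2), namely that $(P(z)/h(z))\,s_{\kappa 1}(z) v$ is a polynomial in $z$ of degree $\leq k+l = \deg P$, the universal property of $\CW(f;P)$ yields a $U$-linear surjection $\CW(f;P) \twoheadrightarrow U\cdot(d^{-1}\otimes v) = D^{-1}\otimes W$; tensoring with the inverse $D$ produces the required surjection $D \otimes \CW(f;P) \twoheadrightarrow W$.

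The main obstacle is verifying (W2). The plan is to expand $s_{\kappa 1}(z) v$ by iterating the $U_q(\Gaff)$-coproduct \eqref{for: coproduct for quantum affine superalgebra S}, giving a sum over monotone chains $\kappa \geq k_1 \geq \cdots \geq k_{n-1} \geq 1$ of tensor products $\otimes_t s_{k_{t-1} k_t}(z) u_t$, with $u_t$ running through the factors $v_i^1$ and $w_j^1$. On $v_i^1 \subseteq \otimes_{l=1}^{r_i}\BV(a_iq^{-2l})$ the operator $s_{pq}(z)$ is a polynomial of degree $\leq r_i$ in $z$ (and analogously for $w_j^1$ in terms of $s_j$), and the BKK weight classification of Theorem~\ref{thm: BKK Schur-Weyl duality} forces most $s_{k_{t-1}k_t}(z) u_t$ to vanish (the putative target weight not being admissible). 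The alternating-sum structure of $v^{(r_i)}, w^{(s_j)}$ from Lemmas~\ref{lem: highest weight vector even fund} and \ref{lem: negative fundamental module} then produces polynomial cancellations exactly matching the factors $\prod_{l=1}^{r_i-1}(1-za_iq^{-2l})$ and $\prod_{l=2}^{s_j}(1-zb_jq^{2l})$ hidden in $h(z)$; these cancel the apparent poles of $P(z)/h(z)$ and leave a polynomial of the required degree $k+l$. The mechanism is already transparent in the $\mathfrak{gl}(1,1)$ case and for a single positive factor of spin $r=2$; extending it uniformly to arbitrary $(r_i, s_j)$ via the combinatorial bookkeeping of the coproduct expansion will be the principal technical task.
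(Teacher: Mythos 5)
Your strategy is sound and matches the paper's in its overall shape: twist by a one-dimensional $U$-module $D$ so that the highest $\ell$-weight vector $d\otimes v$ satisfies~(W1) with the rational function $f$ from the statement, then invoke the universal property of $\CW(f;P)$ once~(W2) is established. You also correctly reduce the whole matter to showing that $s_{\kappa 1}(z)\,v$, multiplied by the appropriate power series, is a polynomial of degree $\leq k+l$.

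However, your verification of~(W2) is not actually carried out. You propose to expand $s_{\kappa 1}(z)\,v$ inside $\otimes_i\bigl(\otimes_{l=1}^{r_i}\BV(a_i q^{-2l})\bigr)\otimes\cdots$, where each factor $s_{pq}(z)\,v_i^1$ is {\it a priori} a polynomial of degree $r_i$ in $z$, and then hope that the alternating-sum structure of $v^{(r_i)}$, $w^{(s_j)}$ produces cancellations that strip off the stray factors $\prod_{l=1}^{r_i-1}(1-za_iq^{-2l})$, $\prod_{l}(1-zb_jq^{2l})$. You only sketch this mechanism in the cases $\mathfrak{gl}(1,1)$ and a single factor with $r=2$, and explicitly defer the general case as ``the principal technical task.'' That is the gap: the essential computation is not done.

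The paper sidesteps all of this by first invoking Lemmas~\ref{lem: fundamental modules} and~\ref{lem: negative fundamental module} to replace each factor $V_{r_i,a_i}^+$ (resp.\ $V_{s_j,b_j}^-$) by its evaluation realization $\ev_{a_iq^{-2r_i}}^*V_{r_i}^+$ (resp.\ $\ev_{b_jq^2}^*V_{s_j}^-$), which is legitimate because these differ from the original factors only by one-dimensional twists that can be absorbed into $D$. On an evaluation module the action of $s_{\kappa 1}(z)$ is, by Equation~\eqref{equ: evaluation morphism}, literally $s_{\kappa 1}-z a\, t_{\kappa 1}$ --- a polynomial of degree~$1$. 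Hence by the coproduct~\eqref{for: coproduct for quantum affine superalgebra S}, $s_{\kappa 1}(z)\,v$ is a polynomial of degree $\leq k+l$ with \emph{no} cancellation needed; condition~(W2) follows immediately since $P(z)/f(z)\doteq f_\kappa(z)$ cancels the eigenvalue of $s_{\kappa\kappa}(z)$ on the twist factor $d$. In effect, the cancellation you would need to prove by hand is exactly the content of Lemmas~\ref{lem: fundamental modules}--\ref{lem: negative fundamental module} already available in the paper; re-deriving it inside the coproduct expansion is both unnecessary and substantially harder. To repair your proof, replace the coproduct-expansion argument for~(W2) by this change of realization.
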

\begin{proof}
We can replace $V_{r,a}^+$ and $V_{s,b}^-$ by $\ev_{aq^{-2r}}^* V_r^+$ and $\ev_{bq^2}^*V_s^-$ respectively. By Lemmas \ref{lem: fundamental modules}--\ref{lem: negative fundamental module}, $v^1_i,w_j^1$ are of weights $\epsilon_1+\epsilon_2+ \cdots + \epsilon_{r_i},- \epsilon_{\kappa} - \epsilon_{\kappa-1} - \cdots - \epsilon_{\kappa-s_j+1}$. Let $v$ be ordered tensor product of the $v_i^1,w_j^1$ and $W = g_1(U) v$. By Equation \eqref{equ: evaluation morphism}:  $s_{1\kappa}(z) v = 0 = t_{1\kappa}(z)v$;  $s_{\kappa 1}(z) v$ is a polynomial of degree $\leq k+l$; $s_{ii}(z)v = f_i(z)v$ and $t_{ii}(z)v = g_i(z) v$ where
\begin{align*}
& f_1 = \prod_{i=1}^k (q-za_iq^{-2r_i-1}) \times \prod_{j=1}^l (1-zb_jq^2),\quad g_1 = f_1 \times (-z^{-1})^{k+l} \prod_{i=1}^k (a_i^{-1}q^{2r_i}) \prod_{j=1}^l (b_j^{-1}q^{-2}),  \\
& f_{\kappa} = \prod_{i=1}^k (1-za_iq^{-2r_i}) \times \prod_{j=1}^l (q-zb_jq), \quad g_{\kappa} = f_{\kappa} \times (-z^{-1})^{k+l} \prod_{i=1}^k (a_i^{-1}q^{2r_i}) \prod_{j=1}^l (b_j^{-1}q^{-2}).
\end{align*}
Let $D_1 = \phi_{[f_{\kappa}^{-1},g_{\kappa}^{-1}]}^*(\BC) = \BC d$ be the one-dimensional $U$-module. Then the tensor product of $U$-modules $D_1\otimes W$ is generated by $d \otimes v$. Moreover, $d\otimes v$ satisfies all the relations in the definition of $\CW(\frac{f_1}{f_{\kappa}}; q^{-k}f_1)$; the latter therefore has $D_1\otimes W$ as a quotient.
\end{proof}
\begin{cor}    \label{cor: conditions D E}
We have $v_1^3 \otimes (\otimes_{i=2}^k v_i^1) \otimes (\otimes_{j=1}^l w_j^1) \in g_1(U)(v_1^2 \otimes (\otimes_{i=2}^k v_i^1) \otimes (\otimes_{j=1}^l w_j^1))$ if $a_1 \neq a_i q^{-2r_i}, b_jq^4$ for all $i,j$. Similarly, if $b_l \neq a_iq^{-2r_i-2s_l}, b_jq^{-2s_l}$ for all $i,j$, then $(\otimes_{i=1}^k v_i^1) \otimes (\otimes_{j=1}^{l-1}w_j^1) \otimes w_l^3 \in g_1(U) ((\otimes_{i=1}^k v_i^1) \otimes (\otimes_{j=1}^{l-1}w_j^1) \otimes w_l^2)$.
\end{cor}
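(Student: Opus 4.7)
The strategy is to deduce both statements from Proposition \ref{prop: simple vs Weyl gl(1,1)}, using the reduction tools of Lemmas \ref{lem: crucial even fundamental}--\ref{lem: reduction Weyl gl(1,1)} to translate the problem into a co-primality question for Drinfeld polynomials over $U = U_q(\widehat{\mathfrak{gl}(1,1)})$. The two halves are symmetric, so I focus on the first.

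Set $u_0 := (\otimes_{i=2}^k v_i^1) \otimes (\otimes_{j=1}^l w_j^1)$, $W_1 := g_1(U) v_1^3 = \BC v_1^2 + \BC v_1^3$ (Lemma \ref{lem: reduction lowest gl(1,1)}), and $Y_2 := g_1(U) u_0$ inside the tensor product $X_2$ of the remaining $k - 1 + l$ fundamental factors. Lemma \ref{lem: crucial even fundamental}, applied with $X_1 = V_{r_1,a_1}^+$ and $X_3 = \BC$ trivial, shows that $W_1 \otimes Y_2$ is $g_1(U)$-stable and that the identity yields a $U$-module isomorphism $g_1^\bullet(W_1 \otimes Y_2) \cong g_1^\bullet W_1 \otimes g_1^\bullet Y_2$. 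It therefore suffices to prove that this latter $U$-module is generated by $v_1^2 \otimes u_0$.

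By Lemma \ref{lem: reduction lowest gl(1,1)}, $g_1^\bullet W_1 \simeq V(f_1)$ with $f_1 := q^{r_1}\frac{1-za_1q^{-2r_1-2}}{1-za_1q^{-2}}$, which is a quotient of the Weyl module $\CW(f_1; P_1)$ for $P_1 := 1 - za_1q^{-2r_1-2}$; the polynomial $P_1/f_1 = q^{-r_1}(1-za_1q^{-2})$ has unique root $z = a_1^{-1}q^2$. By Lemma \ref{lem: reduction Weyl gl(1,1)}, $g_1^\bullet Y_2$ is (up to a one-dimensional twist) a quotient of a Weyl module $\CW(g; Q)$ where
\begin{displaymath}
Q = \prod_{i=2}^k (1 - za_iq^{-2r_i-2}) \prod_{j=1}^l (1 - zb_jq^2),
\end{displaymath}
with roots $z = a_i^{-1}q^{2r_i+2}$ ($2 \leq i \leq k$) and $z = b_j^{-1}q^{-2}$ ($1 \leq j \leq l$). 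The hypothesis $a_1 \neq a_iq^{-2r_i}, b_jq^4$ is exactly the assertion that $P_1/f_1$ and $Q$ share no common root, i.e.\ are co-prime. Proposition \ref{prop: simple vs Weyl gl(1,1)} then shows that $\CW(f_1; P_1) \otimes \CW(g; Q)$ is of highest $\ell$-weight; this cyclicity descends to its quotient $g_1^\bullet W_1 \otimes g_1^\bullet Y_2$, which is consequently generated by $v_1^2 \otimes u_0$, yielding $v_1^3 \otimes u_0 \in g_1(U)(v_1^2 \otimes u_0)$.

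For the second statement the same argument applies with roles interchanged: the two-dimensional piece $W_l' = g_1(U) w_l^3$ sits at the \emph{right}, and the Weyl module for the left factor $u_0' := (\otimes_{i=1}^k v_i^1)\otimes(\otimes_{j=1}^{l-1} w_j^1)$ is taken as $\CW(g';Q')$. A direct computation gives $Q'/g' \doteq \prod_{i=1}^k (1 - za_iq^{-2r_i}) \prod_{j=1}^{l-1}(1 - zb_j)$, while $g_1^\bullet W_l' \simeq V(h_l)$ is a quotient of $\CW(h_l; R_l)$ with $R_l := 1 - zb_lq^{2s_l}$ having unique root $b_l^{-1}q^{-2s_l}$; co-primality of $Q'/g'$ and $R_l$ is then precisely the stated hypothesis $b_l \neq a_iq^{-2r_i-2s_l}, b_jq^{-2s_l}$. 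The one-dimensional twists arising from Lemmas \ref{lem: reduction lowest gl(1,1)}--\ref{lem: reduction Weyl gl(1,1)} are harmless throughout, since they alter neither cyclicity nor the zero loci of Drinfeld polynomials in $z$. The main delicate point is matching the numerical hypotheses of the corollary with the co-primality of the correct pair of polynomials, which is why the shifts $q^{-2r_i}, q^4$ in the first statement and $q^{-2r_i-2s_l}, q^{-2s_l}$ in the second come out as stated.
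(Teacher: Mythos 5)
Your argument is correct and mirrors the paper's own proof: both reduce to $\mathfrak{gl}(1,1)$ via Lemma \ref{lem: crucial even fundamental}, identify the two factors of the $U$-module tensor product as quotients of Weyl modules via Lemmas \ref{lem: reduction lowest gl(1,1)} and \ref{lem: reduction Weyl gl(1,1)}, and invoke Proposition \ref{prop: simple vs Weyl gl(1,1)} by matching the numerical hypotheses to a co-primality condition. The only (cosmetic) difference is that you spell out the first half and sketch the second, whereas the paper proves the second half in detail and declares the first similar.
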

\begin{proof}
Let us prove the second part, the first part being similar. We are in the situation of Lemma \ref{lem: crucial even fundamental} where $X_1$ is trivial, $X_3 := V_{s_l,b_l}^-, Y_3 := W_l' \subseteq X_{3}$ (see Lemma \ref{lem: reduction lowest gl(1,1)}) and
$$X_2 := (\otimes_{i=1}^k V_{r_i,a_i}^+) \otimes (\otimes_{j=1}^{l-1} V_{s_j,b_j}^-), \quad Y_2 := g_1(U)((\otimes_{i=1}^k v_i^1) \otimes (\otimes_{j=1}^{l-1}w_j^1)) \subseteq  X_2.  $$
It follows that $g_1^{\bullet}(Y_2\otimes Y_3) = g_1^{\bullet}Y_2 \otimes g_1^{\bullet}Y_3$. The $U$-modules $g_1^{\bullet}(Y_2)$ and $g_1^{\bullet}(Y_3)$ (see Lemma \ref{lem: reduction lowest gl(1,1)}) are generated by highest $\ell$-weight vectors $(\otimes_{i=1}^k v_i^1) \otimes (\otimes_{j=1}^{l-1}w_j^1)$ and $w_l^2$ respectively.
It is therefore enough to prove that $g_1^{\bullet}Y_2 \otimes g_1^{\bullet}Y_3$ is of highest $\ell$-weight. Up to tensor products by one-dimensional modules, by Lemma \ref{lem: reduction Weyl gl(1,1)}, $g_1^{\bullet} Y_2$ is  a quotient of the Weyl module
$$\CW_2:=  \CW(\prod_{i=1}^k \frac{q-za_iq^{-2r_i-1}}{1-za_iq^{-2r_i}} \times \prod_{j=1}^l \frac{1-zb_jq^2}{q-zb_jq}; \prod_{i=1}^k(1-za_iq^{-2r_i-2}) \times \prod_{j=1}^{l-1} (1-zb_jq^2)); $$
by Lemmas \ref{lem: reduction lowest gl(1,1)} and \ref{lem: Weyl modules gl(1,1)} $g_1^{\bullet} Y_3$ is a quotient of the Weyl module
$$\CW_3 := \CW(q^{-s_l}\frac{1-zb_lq^{2s_l}}{1-zb_l}; 1-zb_lq^{2s_l}). $$ 
By assumption $b_lq^{2s_l} \notin \{ b_j, a_iq^{-2r_i} : 1\leq i \leq k, 1\leq j < l\}$. We deduce from Proposition \ref{prop: simple vs Weyl gl(1,1)} that $\CW_2 \otimes \CW_3$ is of highest $\ell$-weight. Its quotient $g_1^{\bullet}Y_2 \otimes g_1^{\bullet}Y_3$ is also of highest $\ell$-weight.
\end{proof}

Now we can prove three special cases of Theorem \ref{thm: main result}.  
\begin{cor}  \label{cor: tensor product even fundamental}
Under conditions (1)--(2) in Theorem \ref{thm: main result}, the tensor products $\otimes_{i=1}^k V_{r_i,a_i}^+$ and $\otimes_{j=1}^l V_{s_j,b_j}^-$ are both of highest $\ell$-weight.
\end{cor}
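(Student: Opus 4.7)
The plan is to induct on the number of tensor factors and use Lemma \ref{lem: cyclicity Chari} to pivot from a lowest $\ell$-weight vector to a highest $\ell$-weight one. I describe the positive case; the negative one is symmetric, using Lemma \ref{lem: reduction highest N} and the second half of Corollary \ref{cor: conditions D E} in place of their counterparts. Let $S := \otimes_{i=1}^k V_{r_i,a_i}^+$ with highest $\ell$-weight vector $v^+ := \otimes_{i=1}^k v_i^1$; I induct on $k$, the base case $k=1$ being immediate from Proposition \ref{prop: simplicity of fundamental modules}. For the inductive step, set $S' := \otimes_{i=2}^k V_{r_i,a_i}^+$ and $v'^+ := \otimes_{i=2}^k v_i^1$; by the inductive hypothesis $S'$ is of highest $\ell$-weight generated by $v'^+$, and by Proposition \ref{prop: simplicity of fundamental modules} the module $V_{r_1,a_1}^+$ is simple, hence also of lowest $\ell$-weight, generated by $v_1^3$. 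Lemma \ref{lem: cyclicity Chari} applied with $V_- = V_{r_1,a_1}^+$ and $V_+ = S'$ then gives $S = U_q(\Gaff)\cdot (v_1^3 \otimes v'^+)$, so it suffices to show
\[
v_1^3 \otimes v'^+ \in U_q(\Gaff) \cdot v^+.
\]

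This last inclusion is built from two containments. First, the first half of Corollary \ref{cor: conditions D E} specialised to $l=0$ only requires $a_1 \neq a_i q^{-2 r_i}$ for $i \geq 2$, which follows from (C1) since $q^{2 r_j}$ always sits at the top of $\Sigma(r_1,r_j)$; this yields $v_1^3 \otimes v'^+ \in g_1(U) \cdot (v_1^2 \otimes v'^+)$. Second, Lemma \ref{lem: reduction highest M} exhibits $K := \otimes_{i=1}^k K_i$ as a $g_2(U_2)$-stable subspace of $S$ whose $U_2$-action identifies it with the tensor product $\otimes_{i=1}^k V_{r_i-1,\, a_i q^{-2}}^{2+}$ of positive fundamentals over $U_2 = U_q(\widehat{\mathfrak{gl}(M-1,N)})$, with $V_{0,\cdot}^{2+}$ interpreted as the trivial module when $r_i=1$. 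A short computation gives $\Sigma_{M-1,N}(r_i-1,r_j-1) \subseteq \Sigma(r_i,r_j)$, so (C1) transfers to the shifted parameters; by a secondary (outer) induction on $M$ this $U_2$-tensor-product is of highest $\ell$-weight with generator $v^+$, whence $v_1^2 \otimes v'^+ \in K \subseteq U_q(\Gaff) \cdot v^+$. Composing the two containments completes the inductive step.

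The main obstacle is the nested-induction bookkeeping: one must simultaneously induct on $k$ and $M$ (respectively on $l$ and $N$ for the negative case), and verify at each $g_2$- or $g_3$-reduction that the non-resonance data $\Sigma(\cdot,\cdot)$ propagates correctly. The boundary cases where some $r_i$ or $s_j$ equals $1$, producing trivial factors after reduction, require no extra work once one admits zero-rank fundamentals as trivial placeholders from the outset.
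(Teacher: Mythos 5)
Your proof follows the same three-step scheme as the paper's: the pivot via Lemma~\ref{lem: cyclicity Chari} to the cyclic vector $v_1^3 \otimes v'^+$, the $g_1$-move from $v_1^2$ to $v_1^3$ via Corollary~\ref{cor: conditions D E}, and the $g_2$-reduction of Lemma~\ref{lem: reduction highest M} feeding a double induction on $M$ and $k$. The explicit checks that (C1) supplies the hypothesis $a_1 \neq a_i q^{-2r_i}$ and that $\Sigma(r_i-1,r_j-1) \subseteq \Sigma(r_i,r_j)$ are a welcome unpacking of what the paper states tersely as ``(C1) stays the same when replacing $M$ by $M-1$.''
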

\begin{proof}
We shall prove the positive case by induction on $M$ and $k$; the negative case uses essentially the same arguments. It is useful to include the case $M = 0$ where  the $r_i = 0$ and $S$ is trivial. Let $M>0$. (C1) implies the conditions of the $a_i$ in the above corollary. So
\begin{displaymath}
(1):\quad v_1^3 \otimes (\otimes_{i=2}^k v_i^1) \in g_1(U) (v_1^2 \otimes (\otimes_{i=2}^k v_i^1)).
\end{displaymath}
Now consider the $U_2$-module $g_2^{\bullet}(K)$ in Lemma \ref{lem: reduction highest M} with $K = g_2(U_2) (\otimes_{i=1}^k v_i^1)$ (so $l = 0$). Since (C1) stays the same when replacing $M$ by $M-1$, the induction hypothesis applied to $M-1$ indicates that $g_2^{\bullet}(K)$ is of highest $\ell$-weight and
\begin{displaymath}
(2):\quad v_1^2 \otimes (\otimes_{i=2}^k v_i^1)  \in g_2(U_2) (\otimes_{i=1}^k v_i^1).
\end{displaymath}
Next the induction hypothesis applied to $k-1$ together with Lemma \ref{lem: cyclicity Chari} indicates that
\begin{displaymath}
(3): \quad  U_q(\Gaff)(v_1^3 \otimes (\otimes_{i=2}^k v_i^1))  = \otimes_{i=1}^k V_{r_i,a_i}^+.
\end{displaymath}
From (1)--(3) it follows that $\otimes_{i=1}^k V_{r_i,a_i}^+$ is of highest $\ell$-weight.
\end{proof}
\begin{cor}  \label{cor: mixed fundamental modules}
Let $1\leq r \leq M, 1\leq s \leq N$ and $a,b \in \BC^{\times}$. If $aq^{-2r} \neq bq^2$ then $V_{r,a}^+ \otimes V_{s,b}^-$ is of highest $\ell$-weight.
\end{cor}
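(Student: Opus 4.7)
The plan is to prove Corollary~\ref{cor: mixed fundamental modules} by induction on $r+s$, combining the reductions to subalgebras provided by Lemmas~\ref{lem: reduction highest M}--\ref{lem: reduction highest N} with the elementary passages of Corollary~\ref{cor: conditions D E} and the cyclicity result Lemma~\ref{lem: cyclicity Chari}. Since both $V_{r,a}^+$ and $V_{s,b}^-$ are simple they are simultaneously of highest and of lowest $\ell$-weight; applying Lemma~\ref{lem: cyclicity Chari} in either orientation shows that $V_{r,a}^+\otimes V_{s,b}^-$ is generated by $v^1\otimes w^3$ and also by $v^3\otimes w^1$. It therefore suffices to place \emph{one} of these two vectors in $U_q(\Gaff)(v^1\otimes w^1)$, which I will do in two steps: first reach $v^2\otimes w^1$ or $v^1\otimes w^2$ by restriction to $U_2$ or $U_3$ (via Lemmas~\ref{lem: reduction highest M} and \ref{lem: reduction highest N} and the induction hypothesis), then move on to $v^3\otimes w^1$ or $v^1\otimes w^3$ by Corollary~\ref{cor: conditions D E}.

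The base cases $r=1$ and $s=1$ are almost immediate. When $r=1$, a direct computation from Example~\ref{example: natural representation quantum superalgebra} gives $v^2 = s_{1\kappa}^{(0)}v_{\kappa}\doteq v_1 = v^1$, so the first chain collapses to a single application of the first part of Corollary~\ref{cor: conditions D E}, whose condition $a\neq bq^4$ is precisely our hypothesis $aq^{-2}\neq bq^2$. The case $s=1$ is symmetric: $w^2\doteq w^1$ by Example~\ref{example: negative natural representation}, and the condition $b\neq aq^{-2r-2}$ required by the second part of Corollary~\ref{cor: conditions D E} coincides with the hypothesis.

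For the inductive step with $r,s\geq 2$ the hypothesis $aq^{-2r}\neq bq^2$ is preserved under both reductions $(r,a)\mapsto(r-1,aq^{-2})$ and $(s,b)\mapsto(s-1,b)$, so the induction hypothesis supplies $v^2\otimes w^1$ or $v^1\otimes w^2$ inside $U_q(\Gaff)(v^1\otimes w^1)$. The delicate point is the complementary condition demanded by Corollary~\ref{cor: conditions D E}: its first part requires $a\neq bq^4$ and its second part requires $b\neq aq^{-2r-2s}$, and neither follows from our single hypothesis in general. The observation that unlocks the argument is that the two forbidden equalities $a=bq^4$ and $b=aq^{-2r-2s}$ cannot both hold, for their conjunction forces $q^{4-2r-2s}=1$, which is impossible since $q$ is not a root of unity and $r+s\geq 4$. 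Hence at least one of the two chains is available in every case, and the induction closes.

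The main obstacle is precisely this dichotomy: in contrast with the pure-parity Corollary~\ref{cor: tensor product even fundamental}, where a single reduction plus one step of Corollary~\ref{cor: conditions D E} suffices, here the single hypothesis $aq^{-2r}\neq bq^2$ is too weak to trigger either half of Corollary~\ref{cor: conditions D E} directly, and the proof relies crucially on the elementary incompatibility of the two bad specializations.
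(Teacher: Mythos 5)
Your proof is correct and takes essentially the same approach as the paper: both arguments feed Corollary~\ref{cor: conditions D E} with the reductions of Lemmas~\ref{lem: reduction highest M}--\ref{lem: reduction highest N} and then close with Lemma~\ref{lem: cyclicity Chari}, with the key set-theoretic observation that $a=bq^4$ and $b=aq^{-2r-2s}$ cannot both hold under the hypothesis $aq^{-2r}\neq bq^2$ (the paper phrases this as the identity $\{a\neq bq^4,\,aq^{-2r}\neq bq^2\}\cup\{b\neq aq^{-2r-2s},\,aq^{-2r}\neq bq^2\}=\{aq^{-2r}\neq bq^2\}$, while you split into a dichotomy at each inductive step). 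The only cosmetic difference is that you run a single induction on $r+s$ with explicit base cases $r=1$ or $s=1$, whereas the paper phrases this as two parallel inductions on $M$ and on $N$ combined by the union identity.
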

\begin{proof}
Firstly use induction on $M$. By Corollary \ref{cor: conditions D E}, $v^3 \otimes w^1 \in g_1(U)(v^2\otimes w^1)$ if $a \neq bq^{4}$. Consider $K = g_2(U_2)v^1$ and $K' = g_2(U_2)w^1$ in Lemma \ref{lem: reduction highest M}; the induction hypothesis applied to $M-1$ shows that if $aq^{-2r} \neq bq^2$ then $v^2\otimes w^1\in g_2(U_2)(v^1\otimes w^1)$. Combining with Lemma \ref{lem: cyclicity Chari}, we see that $V_{r,a}^+ \otimes V_{s,b}^-$ is of highest $\ell$-weight if $a \neq bq^4$ and $aq^{-2r} \neq bq^2$. 

Secondly use induction on $N$. By Corollary \ref{cor: conditions D E}, $v^1\otimes w^3 \in g_1(U)(v^1\otimes w^2)$ if $b \neq aq^{-2r-2s}$. Consider $L = g_3(U_3)v^1$ and $L' = g_3(U_3)w^1$ in Lemma \ref{lem: reduction highest N}; the induction hypothesis applied to $N-1$ shows that if $aq^{-2r} \neq bq^{2}$ then $v^1\otimes w^2\in g_3(U_3) (v^1\otimes w^1)$. Thus $V_{r,a}^+\otimes V_{s,b}^-$ is of highest $\ell$-weight if $b \neq aq^{-2r-2s}$ and $aq^{-2r} \neq bq^2$.

Conclude as $\{a\neq bq^4, aq^{-2r} \neq b q^2 \} \cup \{b\neq aq^{-2r-2s}, aq^{-2r}\neq bq^2 \} = \{aq^{-2r} \neq bq^2\}$.
\end{proof}
\begin{cor}   \label{cor: simple fund modules two}
If $\frac{a_2}{a_1} \notin \Sigma(r_1,r_2)$ and $\frac{a_1}{a_2} \notin \Sigma(r_2,r_1)$, then $V_{r_1,a_1}^+ \otimes V_{r_2,a_2}^+ \cong V_{r_2,a_2}^+ \otimes V_{r_1,a_1}^+$ as $U_q(\Gaff)$-modules. Similarly, the $U_q(\Gaff)$-modules $V_{s_1,b_1}^- \otimes V_{s_2,b_2}^-$ and $V_{s_2,b_2}^- \otimes V_{s_1,b_1}^-$ are isomorphic if $\frac{b_1}{b_2} \notin \Sigma(s_1,s_2)$ and $\frac{b_2}{b_1} \notin \Sigma(s_2,s_1)$.
\end{cor}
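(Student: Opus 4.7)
The plan is to prove the positive case by constructing an explicit $U_q(\Gaff)$-linear isomorphism from the normalized $R$-matrix of Lemma \ref{lem: normalized r matrix even odd}(A); the negative case will then follow by transport via the Hopf isomorphism $f:U_q(\Gafft)\to U_q(\Gaff)^{\mathrm{cop}}$ in \eqref{equ: from gl(M,N) to gl(N,M)} together with the identification $\vartheta_t$ from the proof of Lemma \ref{lem: negative fundamental module}, since $f$ reverses tensor products and sends negative fundamentals over $\Glie$ to positive fundamentals over $\Glie'$ (up to one-dimensional twists), under which the two hypotheses are preserved.

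For the positive case, set $u=\min(r_1,r_2)$. Unpacking the definition of $\Sigma(r_1,r_2)$, the first hypothesis $\frac{a_2}{a_1}\notin\Sigma(r_1,r_2)$ is equivalent to $\prod_{j=1}^u(a_1-a_2q^{-2(r_2-u+j)})\neq 0$, which is exactly the non-vanishing at $(a,b)=(a_1,a_2)$ of the denominator $N$ in Theorem \ref{thm: even even poles} applied with $(s,t)=(r_1,r_2)$. Consequently the $\End(V_{r_1}^+\otimes V_{r_2}^+)$-valued rational function $\frac{1}{X}R_{a,b}^{r_1,r_2}|_{V_{r_1}^+\otimes V_{r_2}^+}$ specializes to a well-defined operator at our point, and composing with the graded permutation yields by Lemma \ref{lem: normalized r matrix even odd}(A) a $U_q(\Gaff)$-linear map
\[
\Phi:V_{r_1,a_1}^+\otimes V_{r_2,a_2}^+\longrightarrow V_{r_2,a_2}^+\otimes V_{r_1,a_1}^+.
\]
Because the identity $R_{a,b}^{r_1,r_2}(v^{(r_1)}\otimes v^{(r_2)})=X\,v^{(r_1)}\otimes v^{(r_2)}$ of Lemma \ref{lem: normalized r matrix even odd}(A) holds as a polynomial identity in $(a,b)$, the matrix coefficient of $\frac{1}{X}R$ along the highest $\ell$-weight direction simplifies to the constant $1$, so $\Phi(v^{(r_1)}\otimes v^{(r_2)})\doteq v^{(r_2)}\otimes v^{(r_1)}\neq 0$.

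The second hypothesis $\frac{a_1}{a_2}\notin\Sigma(r_2,r_1)$ is precisely the instance of condition (C1) of Theorem \ref{thm: main result} needed to apply Corollary \ref{cor: tensor product even fundamental} to the reversed tensor product, showing that $V_{r_2,a_2}^+\otimes V_{r_1,a_1}^+$ is of highest $\ell$-weight generated by $v^{(r_2)}\otimes v^{(r_1)}$. Therefore the image of $\Phi$, being a $U_q(\Gaff)$-submodule containing this generator, equals the whole target; since source and target have the same $\BC$-dimension, $\Phi$ is an isomorphism. The main subtlety to keep in mind is the passage from $X$ to $N$: although $X$ itself may vanish at $(a_1,a_2)$, the reduced denominator of the normalized $R$-matrix is $N$ (this is what Theorem \ref{thm: even even poles} provides), and it is exactly this cancellation that makes the hypotheses of the corollary — rather than the strictly stronger $X(a_1,a_2)\neq 0$ and $Y(a_1,a_2)\neq 0$ — the correct conditions.
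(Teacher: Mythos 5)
Your proposal is correct and follows essentially the same route as the paper: reduce the negative case to the positive one via the Hopf isomorphism $f$, use Theorem \ref{thm: even even poles} to see that the first hypothesis is exactly the non-vanishing of the denominator $N$ so that the normalized $R$-matrix of Lemma \ref{lem: normalized r matrix even odd}(A) specializes to a well-defined $U_q(\Gaff)$-linear map sending the highest $\ell$-weight vector to the highest $\ell$-weight vector, and then invoke Corollary \ref{cor: tensor product even fundamental} under the second hypothesis to conclude the map is onto and hence an isomorphism by dimension count. The only difference is that you spell out more explicitly the cancellation between $X$ and its reduced denominator $N$, a point the paper leaves implicit.
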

\begin{proof}
It is enough to consider positive fundamental modules as $f^*(V_{r,a}^-)  \cong V_{r,aq^{2r+2}}'^{+}$ in view of the proof of Lemma \ref{lem: negative fundamental module}. By Lemma \ref{lem: normalized r matrix even odd} (A) and Theorem \ref{thm: even even poles}, there exists a $U_q(\Gaff)$-linear map $R_{r_1,r_2}: V_{r_1,a_1}^+ \otimes V_{r_2,a_2}^+ \longrightarrow V_{r_2,a_2}^+ \otimes V_{r_1,a_1}^+$ sending $v_1^1 \otimes v_2^1$ to $v_2^1 \otimes v^1_1$. By Corollary \ref{cor: tensor product even fundamental}, $v_2^1 \otimes v_1^1$ generates $V_{r_2,a_2}^+ \otimes V_{r_1,a_1}^+$.  So $R_{r_1,r_2}$ is an isomorphism of $U_q(\Gaff)$-modules.
\end{proof}

\begin{cor}   \label{cor: rearrangement of tensor products of even fund}
Under conditions (C1)--(C2) in Theorem \ref{thm: main result}, we have $\sigma \in \mathfrak{S}_k, \tau \in \mathfrak{S_l}$: 
\begin{itemize}
\item[(1)] as $U_q(\Gaff)$-modules $\otimes_{i=1}^k V_{r_i,a_i}^+ \cong \otimes_{i=1}^k V_{r_{\sigma(i)},a_{\sigma(i)}}^+$ and $\otimes_{j=1}^l V_{s_j,b_j}^- \cong \otimes_{j=1}^k V_{s_{\tau(j)},b_{\tau(j)}}^-$;
\item[(2)] $\frac{a_{\sigma(i)}}{a_{\sigma(j)}}, \frac{b_{\tau(j)}}{b_{\tau(i)}} \notin q^{2\BZ_{< 0}}$ whenever $i < j$.
\end{itemize}
\end{cor}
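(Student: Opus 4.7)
My plan is to build the permutations $\sigma$ and $\tau$ explicitly from partial orders on the spectral parameters, then realize the isomorphism (1) through a sequence of adjacent swaps, verifying at each step the two hypotheses of Corollary \ref{cor: simple fund modules two}. The positive and negative parts are symmetric, so I describe the positive case in detail; for the negative case one uses the partial order $u \prec' v$ iff $b_u/b_v \in q^{2\BZ_{>0}}$ in place of $\prec$, together with (C2) in place of (C1) and the negative half of Corollary \ref{cor: simple fund modules two}.

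On $\{1,2,\ldots,k\}$ I introduce the strict partial order $\prec$ defined by $x \prec y$ iff $a_y/a_x \in q^{2\BZ_{>0}}$. Transitivity is immediate, and antisymmetry follows from the hypothesis that $q$ is not a root of unity (so $a_y/a_x$ and $a_x/a_y$ cannot both lie in $q^{2\BZ_{>0}}$). I then let $\sigma$ be any stable linear extension of $\prec^{\mathrm{op}}$: sort the indices so that $\prec$-larger elements come first, breaking ties (i.e.\ incomparable pairs) by the original order. For $i < j$ in the new positions we then have $\sigma(i) \not\prec \sigma(j)$, which is precisely statement (2). The same recipe, applied to $\prec'$, produces $\tau$ and gives the $b$-part of (2).

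To prove (1), I transport $\otimes_{i=1}^k V_{r_i,a_i}^+$ to $\otimes_{i=1}^k V_{r_{\sigma(i)},a_{\sigma(i)}}^+$ by a bubble sort: a sequence of adjacent transpositions of tensor factors. Every swap concerns an inversion $(x,y)$ of the target sort, meaning $x<y$ in the original labeling but $y$ precedes $x$ in $\sigma$; by the stability of the linear extension, every such inversion satisfies $x \prec y$, i.e.\ $a_y/a_x \in q^{2\BZ_{>0}}$. I then apply Corollary \ref{cor: simple fund modules two} to the adjacent pair $V_{r_x,a_x}^+ \otimes V_{r_y,a_y}^+$: its first hypothesis $a_y/a_x \notin \Sigma(r_x,r_y)$ is exactly (C1) applied to $x<y$, and its second hypothesis $a_x/a_y \notin \Sigma(r_y,r_x)$ is automatic, because $a_x/a_y \in q^{2\BZ_{<0}}$ is disjoint from $\Sigma(r_y,r_x) \subset q^{2\BZ_{>0}}$. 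Each resulting $U_q(\Gaff)$-module isomorphism extends to the full tensor product by tensoring with identities on the other factors, and the composition over all swaps yields (1).

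The only real subtlety is that (C1) supplies only the ``forward'' direction of the hypotheses of Corollary \ref{cor: simple fund modules two}; the second, ``backward'' direction is not a priori available. Choosing $\sigma$ to be a stable linear extension of $\prec^{\mathrm{op}}$ is precisely what forces every inversion encountered during the bubble sort into the favorable regime $a_y/a_x \in q^{2\BZ_{>0}}$, where that backward hypothesis is free. Once this observation is in place, the remaining argument is a straightforward composition of $U_q(\Gaff)$-linear isomorphisms.
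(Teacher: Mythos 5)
Your overall strategy --- transport $\otimes_i V_{r_i,a_i}^+$ to $\otimes_i V_{r_{\sigma(i)},a_{\sigma(i)}}^+$ by adjacent transpositions, applying Corollary \ref{cor: simple fund modules two} at each step, choosing $\sigma$ so that the ``backward'' hypothesis is automatic --- is exactly the argument of \cite[Corollary 2.2]{AK} to which the paper defers, so the route is the intended one. However, your justification of the backward hypothesis contains an overclaim that is false in general: you assert that every inversion $(x,y)$ (with $x<y$ in the original labeling, $y$ preceding $x$ under $\sigma$) necessarily satisfies $x\prec y$, i.e.\ $a_y/a_x\in q^{2\BZ_{>0}}$. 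This cannot be arranged. Take $k=3$, $a_1 = 1$, $a_3 = q^2$, and $a_2 = c$ with $c\notin q^{2\BZ}$; then $1\prec 3$, while $2$ is $\prec$-incomparable to both $1$ and $3$. Any linear extension of $\prec^{\mathrm{op}}$ must place $3$ before $1$, and wherever $2$ is placed, one of $(2,3)$ or $(1,2)$ is an inversion with incomparable entries. (Requiring all incomparable pairs to keep original order is inconsistent: it forces $1$ before $2$ before $3$ before $1$.) So ``stability'' cannot deliver $x\prec y$ for every swapped pair.

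Fortunately the argument survives with the correct, weaker statement. What a linear extension of $\prec^{\mathrm{op}}$ does guarantee is that a swapped pair $(x,y)$, $x<y$, satisfies $y\not\prec x$: if $y\prec x$ held, $x$ would be $\prec$-larger and hence would precede $y$, contradicting that we are swapping them. Thus $a_x/a_y\notin q^{2\BZ_{>0}}$, which already forces $a_x/a_y\notin\Sigma(r_y,r_x)$ since $\Sigma(r_y,r_x)\subset q^{2\BZ_{>0}}$; the full $a_x/a_y\in q^{2\BZ_{<0}}$ was never needed. With this one-line correction (replace ``$x\prec y$, hence $a_x/a_y\in q^{2\BZ_{<0}}$'' by ``$y\not\prec x$, hence $a_x/a_y\notin q^{2\BZ_{>0}}$''), the bubble-sort verification of the hypotheses of Corollary \ref{cor: simple fund modules two} is complete, and both (1) and (2) follow as you describe.
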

One can copy the proof of \cite[Corollary 2.2]{AK} using Corollaries \ref{cor: tensor product even fundamental} and \ref{cor: simple fund modules two}.

\noindent {\it Proof of Theorem \ref{thm: main result}.} \label{page: proof theorem} We use induction on $M+N$ and $k+l$. Owing to Corollary \ref{cor: rearrangement of tensor products of even fund}, we can assume that for $1 \leq i < j \leq k$, either $\frac{a_i}{a_j} \in q^{2\BZ_{\geq 0}}$ or $\frac{a_i}{a_j} \notin q^{2\BZ}$. If one of the two assumptions in Corollary \ref{cor: conditions D E} is satisfied, then using reduction to either $U_2$ in Lemma \ref{lem: reduction highest M}  or to $U_3$ in Lemma \ref{lem: reduction highest N} together with Lemma \ref{lem: cyclicity Chari}, we can conclude as in the proof of Corollary \ref{cor: tensor product even fundamental} that $S$ is of highest $\ell$-weight.

Suppose that first assumption in Corollary \ref{cor: conditions D E} fails, so that $a_1  = b_t q^4$ for some $1 \leq t \leq l$. Now let us rearrange the tensor product $\otimes_{j=1}^l V_{s_j,b_j}^-$ as in Corollary \ref{cor: rearrangement of tensor products of even fund} in such a way that $\frac{b_{\tau(l)}}{b_t} \in q^{2\BZ_{\geq 0}}$. (This is possible by Corollary \ref{cor: simple fund modules two}, and possibly $\tau(l) = t$.)  Assume next that the second assumption in Corollary \ref{cor: conditions D E} fails, so that  $b_{\tau(l)} = a_i q^{-2r_i-2s_{\tau(l)}}$ for some $1 \leq i \leq k$. It follows that $\frac{b_{\tau(l)}}{b_t} = \frac{a_i}{a_1} q^{4-2r_i-2s_{\tau(l)}}$. From $\frac{b_{\tau(l)}}{b_t} \in q^{2\BZ_{\geq 0}}$ we obtain $\frac{a_i}{a_1} \in q^{2\BZ}$ and so $\frac{a_i}{a_1} \in q^{2\BZ_{\leq 0}}$. This forces $r_i = s_{\tau(l)} = 1,a_i = a_1$ and $b_t q^2 = a_1q^{-2} = a_i q^{-2r_i}$, in contradiction with (C3). This completes the proof of Theorem \ref{thm: main result}. \hfill $\Box$

The following lemma (actually only the case $k+l=2$) will be used in the next section.
\begin{lem}   \label{lem: cyclicity tensor product two fund}
In Theorem \ref{thm: main result}, if $k = l = 1$, or $l = 0$ or $k = 0$, then (C3), or (C1) or (C2) is necessary for $S$ to be of highest $\ell$-weight.
\end{lem}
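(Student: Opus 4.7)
The strategy is uniform across the three cases: I manufacture, from the pole of the appropriate normalized $R$-matrix, a non-zero $U_q(\Gaff)$-module map out of $S$ whose kernel contains the unique (up to scalar) highest $\ell$-weight vector of $S$. Such a map forces $S$ not to be of highest $\ell$-weight, since otherwise $S$ would be generated by that vector and the kernel would have to coincide with $S$, contradicting non-vanishing.

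For $k=l=1$ with $a_1q^{-2r_1}=b_1q^2$, I start from the rational family of $U_q(\Gaff)$-module maps $\tilde R(a,b)\colon V_{r_1,a}^+\otimes V_{s_1,b}^-\to V_{s_1,b}^-\otimes V_{r_1,a}^+$ obtained by composing $R^{r_1|s_1}_{a,b}$ from Lemma \ref{lem: normalized r matrix even odd}(B) with the graded flip; this family sends $v^{(r_1)}\otimes w^{(s_1)}$ to a non-zero multiple of $w^{(s_1)}\otimes v^{(r_1)}$, and by Theorem \ref{thm: pole even odd} its minimal denominator is $bq^2-aq^{-2r_1}$. Clearing the denominator yields the polynomial-in-$(a,b)$ family $\bar R(a,b):=(bq^2-aq^{-2r_1})\tilde R(a,b)$ of $U_q(\Gaff)$-module maps, whose action on the highest $\ell$-weight vector is proportional to $bq^2-aq^{-2r_1}$. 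Specializing at $(a_1,b_1)$ annihilates the highest $\ell$-weight vector while leaving $\bar R(a_1,b_1)$ itself non-zero, concluding this case.

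For $(k,l)=(2,0)$ with $a_2/a_1\in\Sigma(r_1,r_2)$ the same mechanism applies, based on Lemma \ref{lem: normalized r matrix even odd}(A) and Theorem \ref{thm: even even poles}: the specialization lies on a linear factor of the minimal denominator $N$ of $X^{-1}R^{r_1,r_2}_{a,b}|_{V_{r_1}^+\otimes V_{r_2}^+}$, and clearing that factor produces a non-zero $U_q(\Gaff)$-linear intertwiner $V_{r_1,a_1}^+\otimes V_{r_2,a_2}^+\to V_{r_2,a_2}^+\otimes V_{r_1,a_1}^+$ annihilating $v^{(r_1)}\otimes v^{(r_2)}$. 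For $(k,l)=(0,2)$ with $b_1/b_2\in\Sigma(s_1,s_2)$, I transfer the question to $U_q(\Gafft)$ via the Hopf superalgebra isomorphism $f$ of \eqref{equ: from gl(M,N) to gl(N,M)} and the identification $f^*V_{s,b}^-\simeq V_{s,bq^{2s+2}}'^+$ from the proof of Lemma \ref{lem: negative fundamental module}; since $f$ takes values in $U_q(\Gaff)^{\mathrm{cop}}$ the tensor order reverses, and a direct check shows that failure of (C2) over $U_q(\Gaff)$ corresponds exactly to failure of (C1) for the resulting positive fundamentals over $U_q(\Gafft)$, reducing it to the previous case.

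The one delicate step, common to all three cases, is ensuring that the specialized cleared-denominator intertwiner is non-zero at the chosen spectral parameters. Minimality of the denominator forces the corresponding numerator (a matrix of polynomials) to be coprime to it, so on each linear factor of the pole locus the numerator does not vanish identically, hence the intertwiner is non-zero at generic points of that line. I remove the word \emph{generic} using the spectral-parameter rescaling $\Phi_c^*$ of \eqref{for: automorphisms of Z-graded superalgebras}: since $\Phi_c$ is a Hopf superalgebra morphism and $\Phi_c^*V_{r,a}^{\pm}\simeq V_{r,ca}^{\pm}$, rescaling all spectral parameters by a common factor preserves the $U_q(\Gaff)$-isomorphism class of $S$, so one may first move the specialization to a generic point of the pole locus, run the argument there, and transport the conclusion back to the original tensor product.
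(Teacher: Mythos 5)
Your core mechanism for the $k+l=2$ cases is exactly the one the paper uses: the minimal denominator from Theorems~\ref{thm: pole even odd}--\ref{thm: even even poles} gives, after clearing it, a polynomial family of $U_q(\Gaff)$-intertwiners that specializes at the bad parameter to a non-zero map annihilating the highest $\ell$-weight vector, which forces $S$ not to be of highest $\ell$-weight; your transfer of the $(0,2)$ case to $U_q(\Gafft)$ via $f$ is also sound. (Your detour through ``generic point on the pole line, then rescale by $\Phi_c^*$'' is correct but unnecessary: since the denominators are homogeneous in $(a,b)$ and minimal, any entry of the numerator not divisible by a given linear factor already fails to vanish at \emph{every} point of that line in $(\BC^{\times})^2$.)

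The genuine gap is that you only treat $(k,l)\in\{(1,1),(2,0),(0,2)\}$, whereas the clause ``if $l=0$'' in the lemma asserts that (C1) is necessary for $S=\otimes_{i=1}^k V_{r_i,a_i}^+$ to be of highest $\ell$-weight for \emph{arbitrary} $k$, and dually ``if $k=0$'' for arbitrary $l$. Your one-shot $R$-matrix argument produces an intertwiner only when the offending pair is an entire tensor product of two factors; when $k\geq 3$ and (C1) fails for some non-adjacent pair $(i_0,j_0)$, you would first need to commute the other factors past each other, which requires further input. The paper disposes of this by an induction on $k+l$, using Corollary~\ref{cor: simple fund modules two} to reorder factors that do satisfy the two-sided non-resonance condition, mirroring the reduction structure in the proof of Theorem~\ref{thm: main result}; your proof contains no counterpart of that inductive step, and the lemma cannot be reduced to $k+l=2$ without it.
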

\begin{proof}
The case $k + l = 2$ comes from the proof of Lemma \ref{lem: negative fundamental module}, Lemma \ref{lem: normalized r matrix even odd} and Theorems \ref{thm: pole even odd}--\ref{thm: even even poles} and the case $kl = 0$ from Corollary \ref{cor: simple fund modules two} by induction on $k+l$ as in the proof of Theorem \ref{thm: main result} on page~\pageref{page: proof theorem}.   
\end{proof}
\section{Simplicity of tensor products}    \label{sec: simplicity}
We give equivalent conditions for a tensor product of fundamental modules to be simple. 

Let us recall the notion of {\it twisted dual} to pass from \lq\lq highest/lowest $\ell$-weight\rq\rq\ to \lq\lq simple\rq\rq. Let $V$ be a finite-dimensional $U_q(\Gaff)$-module. Its twisted dual, is the dual space $\hom(V,\BC)$ endowed with a $U_q(\Gaff)$-module structure, denoted by $V^{\vee}$, as follows: 
\begin{displaymath}
\langle x l, v \rangle := (-1)^{|l||x|} \langle l, \Sm \Psi(x) v \rangle\quad \mathrm{for}\ x \in U_q(\Gaff), l \in \hom(V,\BC), v \in V.  
\end{displaymath} 
For $V,W$ finite-dimensional $U_q(\Gaff)$-modules, by Equation \eqref{equ: transposition in quantum affine superalgebra} we have a natural isomorphism of $U_q(\Gaff)$-modules $(V\otimes W)^{\vee} \cong V^{\vee} \otimes W^{\vee}$. 
 For $1\leq i \leq n$, let $V_i$ be a finite-dimensional simple $U_q(\Gaff)$-module generated by a highest $\ell$-weight vector $v_i$. Then $V_i^{\vee}$ is again simple and contains a highest $\ell$-weight vector $v_i^*$ such that $v_i^*(v_i) = 1$. By duality argument, the tensor product $\otimes_{i=1}^n V_i$ is of highest $\ell$-weight if and only if the submodule $S$ of $\otimes_{i=1}^n V_i^{\vee}$ generated by $\otimes_{i=1}^n v_i^*$ is contained in all the other non-zero submodules. ($S$ must then be simple and is the {\it socle} of $\otimes_{i=1}^n V_i^{\vee}$.) The tensor product $\otimes_{i=1}^n V_i$ is simple if and only if $\otimes_{i=1}^nV_i$ and $\otimes_{i=1}^nV_i^{\vee}$ are both of highest $\ell$-weight. Similar statements hold for lowest $\ell$-weight modules.

  The twisted dual of $\BV(a)$ has been computed in \cite[Eq.(3.26)]{Z3}: 
$$ \BV(a)^{\vee} \simeq \BV(a^{-1}q^{2M-2N}). $$
Next let us compute the twisted dual of $\BW(a)$ in Example \ref{example: negative natural representation}. Denote by $\rho_a$ the representation of $U_q(\Gaff)$ on $\BW(a)$. As in \cite[\S 3.2]{Z3}, introduce 
\begin{align*}
X(z) &= (\rho_a \otimes \Id_{\End \BW}) (\sum_{i,j} s_{ij}(z) \otimes E_{ij})  \\
&= \sum_i (q_i^{-1}-zaq_i) E_{ii} \otimes E_{ii} + (1-za) \sum_{i \neq j} E_{ii} \otimes E_{jj} \\
&\quad + \sum_{i<j} (q_j^{-1}-q_j) E_{ji} \otimes E_{ij} + za\sum_{i>j} (q_j^{-1}-q_j) E_{ji} \otimes E_{ij} \in \End (\BW^{\otimes 2})[[z]].
\end{align*}
Set $A := (1-zaq^2)(1-zaq^{-2})$. By Equation \eqref{for: antipode}, we have
\begin{align*}
X(z)^{-1} &= (\rho_a \otimes \Id_{\End \BW}) (\sum_{i,j} \Sm(s_{ij}(z)) \otimes E_{ij})  \\
&= \frac{1}{A} \{ \sum_i (q_i-zaq_i^{-1}) E_{ii} \otimes E_{ii} + (1-za) \sum_{i \neq j} E_{ii} \otimes E_{jj} \\
&\quad \quad \quad + \sum_{i<j} (q_j-q_j^{-1}) E_{ji} \otimes E_{ij} + za\sum_{i>j} (q_j-q_j^{-1}) E_{ji} \otimes E_{ij}  \}.
\end{align*}
Similarly we can find the $\rho_a(\Sm(t_{ij}(z)))$. By comparing highest $\ell$-weights, we obtain
\begin{equation*}  
\BW(a)^{\vee} \simeq \BW(a^{-1}).
\end{equation*}

\noindent {\it Proof of Proposition \ref{prop: simplicity of fundamental modules}}.\label{page:simplicity} Let us consider the positive case; the negative case can then be implied as in the proof of Lemma \ref{lem: negative fundamental module}. Recall the following fact in \cite[Prop.4.7]{Z2}: $\otimes_{i=1}^s\BV(aq^{2i})$ is of lowest $\ell$-weight for all $a\in \BC^{\times}$ and $s \in \BZ_{>0}$. By taking twisted dual and using the formula of $\BV(a)^{\vee}$, we see that the lowest $\ell$-weight vector $v_{\kappa}^{\otimes s}$ generates the simple socle of $\otimes_{i=1}^s \BV(aq^{-2i})$ for all $a \in \BC^{\times}$. In particular, $V_{s,a}^+$ in Definition \ref{def: fundamental representations} is simple. \hfill $\Box$
\begin{lem} \label{lem: twisted dual of fundamental modules}
$(V_{r,a}^+)^{\vee} \simeq V_{r,a^{-1} q^{2(M-N+1+r)}}^+$ and $(V_{s,a}^-)^{\vee} \simeq V_{s,a^{-1}q^{-2s-2}}^-$ for $a \in \BC^{\times}$. 
\end{lem}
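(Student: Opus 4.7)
The plan is to match the \emph{lowest} $\ell$-weight data on both sides rather than the highest, exploiting the fact that the lowest $\ell$-weight vectors of fundamental modules are the explicit pure tensors $v_{\kappa}^{\otimes r}$ and $w_{1}^{\otimes s}$, as opposed to the combinatorial alternating sums $v^{(r)}, w^{(t)}$ of Lemmas~\ref{lem: highest weight vector even fund}--\ref{lem: negative fundamental module}. By Proposition~\ref{prop: simplicity of fundamental modules}, all four modules involved are simple; and by applying the $\Psi$-symmetry~\eqref{equ: transposition in quantum affine superalgebra} to the highest-$\ell$-weight classification of \S\ref{subsec: classification}, simple finite-dimensional $U_q(\Gaff)$-modules are determined up to one-dimensional twist by the analogous tuple of eigenvalues on a lowest $\ell$-weight vector.

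For the positive case, I would dualize the embedding $V_{r,a}^+\hookrightarrow \otimes_{j=1}^r\BV(aq^{-2j})$ (Definition~\ref{def: fundamental representations}) and use the already-established $\BV(c)^\vee\simeq\BV(c^{-1}q^{2M-2N})$ together with $(X\otimes Y)^\vee\cong X^\vee\otimes Y^\vee$ to obtain a $U_q(\Gaff)$-linear surjection
$$p:\ \otimes_{j=1}^r\BV(a^{-1}q^{2j+2M-2N})\twoheadrightarrow (V_{r,a}^+)^\vee.$$
The source carries $v_\kappa^{\otimes r}$ as a lowest $\ell$-weight vector (tensor product of LWVs is an LWV by~\eqref{for: coproduct for quantum affine superalgebra S}), and under the natural identification its image is a nonzero scalar multiple of $(v_\kappa^{\otimes r})^{*}\in (V_{r,a}^+)^\vee$, which I would verify to be a lowest $\ell$-weight vector of the simple target by a weight-space argument. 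Consequently the lowest-$\ell$-weight eigenvalues of $(V_{r,a}^+)^\vee$ equal those of $v_\kappa^{\otimes r}$ in the source, which factor as products over the tensor factors via~\eqref{for: coproduct for quantum affine superalgebra S} and Example~\ref{example: natural representation quantum superalgebra}: a telescoping computation yields a non-trivial $(\kappa{-}1)$-st ratio $q^r(1-za^{-1}q^{2M-2N+2})/(1-za^{-1}q^{2M-2N+2r+2})$ with all other ratios $=1$. The candidate $V_{r,a^{-1}q^{2(M-N+1+r)}}^+$ is by definition a sub of $\otimes_{j=1}^r\BV(\alpha q^{-2j})$ with $\alpha=a^{-1}q^{2(M-N+r+1)}$, whose spectral parameters $\alpha q^{-2j}$ constitute the same multiset $\{a^{-1}q^{2M-2N+2},\ldots,a^{-1}q^{2M-2N+2r}\}$ as the factors above (after reindexing $j\mapsto r+1-j$); the analogous telescoping computation on its LWV $v_\kappa^{\otimes r}$ produces the same ratio, matching the two simple modules up to one-dimensional twist.

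The negative case proceeds entirely in parallel: dualize $V_{s,a}^-\hookrightarrow\otimes_{j=1}^s\BW(aq^{2j})$ using $\BW(c)^\vee\simeq\BW(c^{-1})$ to get a surjection $\otimes_{j=1}^s\BW(a^{-1}q^{-2j})\twoheadrightarrow (V_{s,a}^-)^\vee$, compute the lowest $\ell$-weight eigenvalues of $w_1^{\otimes s}$ factor by factor via Example~\ref{example: negative natural representation}, and telescope to obtain the non-trivial first ratio $q^{-s}(1-za^{-1})/(1-za^{-1}q^{-2s})$. The same ratio arises for $V_{s,a^{-1}q^{-2s-2}}^-\hookrightarrow\otimes_{j=1}^s\BW(a^{-1}q^{-2s-2+2j})$ since its factors' spectral parameters $\{a^{-1}q^{-2s},\ldots,a^{-1}q^{-2}\}$ coincide with those in the dual tensor product after reindexing.

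The step requiring the most care is verifying that $(v_\kappa^{\otimes r})^{*}$ is genuinely a lowest $\ell$-weight vector of $(V_{r,a}^+)^\vee$ (and similarly $(w_1^{\otimes s})^{*}$ for the negative case): nonvanishing is immediate since $v_\kappa^{\otimes r}\in V_{r,a}^+$, and the annihilation by $s_{ij}^{(n)}, t_{ij}^{(n)}$ with $i>j$ reduces via $\Sm\Psi$ and a weight-shift argument to the observation that applying any such operator would produce a weight strictly below $r\epsilon_\kappa$ in $V_{r,a}^+$, which is impossible; at zeroth mode this is direct from $\Psi(s_{ij}^{(0)}) = \varepsilon_{ji}t_{ji}^{(0)}=0$ for $i>j$ by relation R3 of Definition~\ref{def: quantum affine superalgebras}.
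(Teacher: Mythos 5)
Your proposal is correct and reaches the lemma's conclusion, but the final identification step differs from the paper's. The paper dualizes the \emph{two-sided} diagram
$\otimes_{j=r}^1\BV(aq^{-2j}) \to V_{r,a}^+ \hookrightarrow \otimes_{j=1}^r\BV(aq^{-2j})$ (both maps fixing $v_\kappa^{\otimes r}$), obtaining after the substitution $\BV(c)^\vee\simeq\BV(c^{-1}q^{2M-2N})$ an analogous diagram in which $(V_{r,a}^+)^\vee$ sits as the submodule of $\otimes_{j=1}^r\BV(\alpha q^{-2j})$ generated by the lowest $\ell$-weight vector; by Definition~\ref{def: fundamental representations} this \emph{is} $V_{r,\alpha}^+$, with no eigenvalue computation needed. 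You instead dualize only the inclusion $\tau$, producing a surjection onto $(V_{r,a}^+)^\vee$, and then identify the simple quotient by explicitly telescoping the lowest-$\ell$-weight eigenvalues and invoking the $\Psi$-twisted $\ell$-weight classification. Both routes are valid: the paper's avoids any computation at the cost of tracking both arrows of the diagram, while yours is more computational but needs only one arrow. Your telescoping formulas check out (e.g.\ the ratio $q^r\frac{1-za^{-1}q^{2M-2N+2}}{1-za^{-1}q^{2M-2N+2r+2}}$ agrees on both sides after substituting $\alpha=a^{-1}q^{2(M-N+1+r)}$), and the point you flag as needing care --- that the dual vector $(v_\kappa^{\otimes r})^*$ is a genuine lowest $\ell$-weight vector of the twisted dual --- is indeed what the $\Psi$-twist in the definition of $V^\vee$ guarantees, via exactly the weight argument you sketch.
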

\begin{proof}
Let us prove the positive case. Recall from the proof of Proposition \ref{prop: simplicity of fundamental modules} that $\otimes_{j=r}^1\BV(aq^{-2j})$ is of lowest $\ell$-weight. This gives rise to a diagram of $U_q(\Gaff)$-modules
$$ \otimes_{j=r}^1\BV(aq^{-2j}) \xrightarrow{\theta} V_{r,a}^+ \xrightarrow{\tau} \otimes_{j=1}^r\BV(aq^{-2j}) $$
where $\theta,\tau$ are $U_q(\Gaff)$-linear and they both fix $v_{\kappa}^{\otimes r}$. Taking the twisted dual and using the formula of $\BV(a)^{\vee}$, one obtains a similar diagram where the $U_q(\Gaff)$-linear maps fix lowest $\ell$-weight vectors. One can use Definition \ref{def: fundamental representations} to conclude $(V_{r,a}^+)^{\vee} \simeq V_{r,a^{-1} q^{2(M-N+1+r)}}^+$.
\end{proof}
\noindent {\it Proof of Lemma \ref{lem: tensor product of odd even}}.\label{page:proof r-matrix} From Corollaries \ref{cor: tensor product even fundamental}--\ref{cor: mixed fundamental modules} we see that in Lemma \ref{lem: tensor product of odd even}, if the signature of $(V,W)$ is $(++)$ or $(--)$ or $(+-)$, then $V_a \otimes W_b$ is of highest $\ell$-weight for $\frac{a}{b}$ in the complementary of a finite subset of $\BC^{\times}$. By Lemma \ref{lem: twisted dual of fundamental modules} and the twisted dual argument, the same is true when replacing \lq\lq highest $\ell$-weight\rq\rq\ with \lq\lq simple\rq\rq. When $V_a \otimes W_b$ is simple, according to the highest $\ell$-weight classification in \S \ref{subsec: classification}, we must have a unique $U_q(\Gaff)$-linear isomorphism $V_a \otimes W_b \cong W_b \otimes V_a$ fixing highest $\ell$-weight vectors $v \otimes w \mapsto w \otimes v$. Such an isomorphism also resolves the case where the signature of $(V,W)$ is $(-+)$. \hfill $\Box$ 

We would like to emphasize that the above proofs of Proposition \ref{prop: simplicity of fundamental modules}, Lemmas \ref{lem: twisted dual of fundamental modules} and \ref{lem: tensor product of odd even} are independent of the results in \S\S \ref{sec: spin r matrix}--\ref{sec: even odd}. They use essentially Weyl modules in \S \ref{sec: Weyl modules}, \cite[Prop.4.7]{Z2} on lowest $\ell$-weight modules, and the twisted dual formula in \cite[Eq.(3.26)]{Z3}.
\begin{theorem}  \label{thm: simplicity of tensor products}
A tensor product of fundamental modules $V_1 \otimes V_2 \otimes \cdots \otimes V_s$ is simple if and only if so is $V_i \otimes V_j$ for all $1 \leq i < j \leq s$.
\end{theorem}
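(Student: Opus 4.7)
The argument relies on two ingredients: the criterion stated just before Lemma~\ref{lem: twisted dual of fundamental modules} (a tensor product of simple $U_q(\Gaff)$-modules is simple iff both it and its twisted dual are of highest $\ell$-weight), combined with Lemma~\ref{lem: twisted dual of fundamental modules} (the twisted dual of a fundamental module is again fundamental). These together allow me to translate between ``simple'' and ``of highest $\ell$-weight'' at the level of tensor products of fundamentals.

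For the ``only if'' direction, I first observe that if $M \subsetneq V_k \otimes V_{k+1}$ is a proper $U_q(\Gaff)$-submodule for any adjacent factor, then $V_1 \otimes \cdots \otimes V_{k-1} \otimes M \otimes V_{k+2} \otimes \cdots \otimes V_s$ is a proper submodule of the big tensor product (a submodule by the Hopf-algebra functoriality of $\otimes$, proper by a dimension count). Thus simplicity of the big product forces every adjacent pair $V_k \otimes V_{k+1}$ to be simple. To obtain non-adjacent pairs, I induct on $|i-j|$: the simple adjacent pair $V_{j-1} \otimes V_j$ is isomorphic to $V_j \otimes V_{j-1}$ as $U_q(\Gaff)$-modules, via the normalized $R$-matrix, which is defined and invertible when the pair is simple since the denominators of Theorems~\ref{thm: pole even odd}--\ref{thm: even even poles} do not then vanish; tensoring this isomorphism with the identity on the other factors swaps $V_{j-1}$ and $V_j$ in the big product while preserving its isomorphism class, hence its simplicity. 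This reduces $|i-j|$ by one, and the induction hypothesis concludes.

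For the ``if'' direction, pairwise simplicity provides the swap isomorphism $V_i \otimes V_j \cong V_j \otimes V_i$ for every pair (same $R$-matrix argument, or Corollary~\ref{cor: simple fund modules two} for same parity together with its mixed-parity analogue via Lemma~\ref{lem: tensor product of odd even}). Using these, I rearrange the big product so that all positive fundamentals precede all negative ones, matching the signature of Theorem~\ref{thm: main result}. Its conditions (C1)--(C3) are, by Lemma~\ref{lem: cyclicity tensor product two fund}, necessary for each relevant pair to be of highest $\ell$-weight in the chosen order, a property implied by pair simplicity (a simple module is generated by its highest $\ell$-weight vector). Theorem~\ref{thm: main result} then yields that the rearranged, and hence the original, tensor product is of highest $\ell$-weight. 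Applying the same argument to the twisted duals $V_i^\vee$---themselves fundamental, and pairwise simple since $V_i^\vee \otimes V_j^\vee = (V_i \otimes V_j)^\vee$ is the dual of a simple module---shows that the dual tensor product is of highest $\ell$-weight as well. The criterion then concludes simplicity.

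The chief delicate point that I expect is verifying the equivalence ``pair simple $\Leftrightarrow$ the swap $R$-matrix is defined and invertible'', which underlies both the adjacent-swap step of the ``only if'' direction and the rearrangement step of the ``if'' direction. For same-parity pairs this is handled by noting that the two candidate denominators in Theorem~\ref{thm: even even poles} (one for each order) have disjoint zero sets when $q$ is not a root of unity; for mixed-parity pairs the single linear denominator of Theorem~\ref{thm: pole even odd} is exactly one of the two obstructions to simplicity (the other being its twisted-dual counterpart), so that pair simplicity forces the denominator to be non-zero and the $R$-matrix to provide the required isomorphism. A brief case analysis across all four signature combinations will settle this.
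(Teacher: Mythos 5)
Your proof is correct and follows essentially the same skeleton as the paper's: adjacent pairs are simple by the proper-submodule argument, simplicity is permutation-invariant, the positive-then-negative arrangement feeds into Theorem~\ref{thm: main result} via Lemma~\ref{lem: cyclicity tensor product two fund}, and the twisted-dual argument with Lemma~\ref{lem: twisted dual of fundamental modules} upgrades ``highest $\ell$-weight'' to ``simple.'' The one place you take a noticeably longer route is in producing the swap isomorphisms $V_i \otimes V_j \cong V_j \otimes V_i$ (for both the non-adjacent step of ``only if'' and the rearrangement step of ``if''): you deduce them from non-vanishing of the $R$-matrix denominators in Theorems~\ref{thm: pole even odd}--\ref{thm: even even poles}, which is why you flag a ``delicate point.'' The paper avoids this entirely by appealing directly to the highest-$\ell$-weight classification of \S\ref{subsec: classification}: since $V_i \otimes V_j$ is simple, $V_j \otimes V_i$ has the same highest $\ell$-weight $\Pi(V_i)\Pi(V_j)$, the same dimension, and a one-dimensional top $\ell$-weight space, so it is forced to be isomorphic to the same simple module---no denominator analysis required. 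With that observation your ``chief delicate point'' evaporates, and your induction on $|i-j|$ in the ``only if'' direction collapses to the one-line permutation argument in the paper. Both versions are valid; the paper's is leaner.
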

\begin{proof}
The \lq\lq only if\rq\rq\ part is trivial as in the non-graded case in \cite[\S 6]{H}: if $\otimes_{i=1}^s S_i$ is simple, then so are $S_i \otimes S_{i+1}$  and $\otimes_{j=1}^s S_{\sigma(j)}$ for $1\leq i < s$ and $\sigma \in \mathfrak{S}_s$ by comparing highest $\ell$-weights. For the \lq\lq if\rq\rq\ part, since $V_i \otimes V_j$ is simple, it is isomorphic to $V_j \otimes V_i$. Without loss of generality we can assume that $\otimes_{i=1}^n V_i =: S$ is of the form in Theorem \ref{thm: main result}: a tensor product of positive fundamental modules followed by that of negative fundamental modules. By Lemma \ref{lem: cyclicity tensor product two fund}, such a tensor product verifies the conditions (C1)--(C3) in Theorem \ref{thm: main result} and is therefore of highest $\ell$-weight.  Similar arguments adapted to $(\otimes_{i=1}^s V_i)^{\vee} \cong \otimes_{i=1}^s V_i^{\vee}$ by Lemma \ref{lem: twisted dual of fundamental modules}, we conclude that $S^{\vee}$ is of highest $\ell$-weight. So $S$ is simple. 
\end{proof}
\begin{rem}   \label{rem: simplicity condition explicit}
Let us make explicit the simplicity condition. Index $i = (r_i,\varepsilon_i)$ where $1\leq r_i \leq M$ if $\varepsilon_i = +$ and $1 \leq r_i \leq N$ if $\varepsilon_i = -$. Define
\begin{equation}    \label{equ: Delta}
\Delta_{ij} := \begin{cases}
\prod_{l=1}^{\min(r_i,r_j)}(a_i - a_j q^{\mp 2(r_j - \min(r_i,r_j)+l)}) & \mathrm{if}\ \varepsilon_i = \varepsilon_j = \pm,  \\
a_i - a_j q^{2r_i+2} & \mathrm{if}\ (\varepsilon_i,\varepsilon_j) = (+,-), \\
a_i - a_j q^{-2M+2N-2r_i-2} & \mathrm{if}\ (\varepsilon_i,\varepsilon_j) = (-,+).
\end{cases}
\end{equation}
Then $\otimes_{i=1}^s V_{r_i,a_i}^{\varepsilon_i}$ is simple if and only if $\Delta_{ij} \neq 0$ for all $i \neq j$.
\end{rem}
\begin{cor}  \label{cor: cyclicity simplicity of tensor products of even fundamental modules}
A tensor product of positive fundamental modules is simple if and only if it is both of highest $\ell$-weight and of lowest $\ell$-weight.
\end{cor}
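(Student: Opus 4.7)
The ``only if'' direction is immediate from the coproduct formula~\eqref{for: coproduct for quantum affine superalgebra S}: the tensor $\bigotimes_{i=1}^n v_i^+$ (resp.\ $\bigotimes_{i=1}^n v_i^-$) of highest (resp.\ lowest) $\ell$-weight vectors of the factors $V_{r_i,a_i}^+$ is a non-zero highest (resp.\ lowest) $\ell$-weight vector of $S$; simplicity of $S$ forces each of them to generate the whole module.

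For the ``if'' direction, suppose $S$ is both of highest and of lowest $\ell$-weight. My plan is to invoke Theorem~\ref{thm: simplicity of tensor products} and reduce the problem to showing that each pair $V_{r_i,a_i}^+\otimes V_{r_j,a_j}^+$ ($i<j$) is simple. By Remark~\ref{rem: simplicity condition explicit} (equivalently Corollary~\ref{cor: simple fund modules two}), this pair is simple precisely when
$$\frac{a_j}{a_i}\notin\Sigma(r_i,r_j)\quad\textrm{and}\quad\frac{a_i}{a_j}\notin\Sigma(r_j,r_i).$$
The first family of inequalities, ranging over $i<j$, is exactly condition (C1) of Theorem~\ref{thm: main result} with $l=0$; since $S$ is of highest $\ell$-weight, the necessity assertion in Lemma~\ref{lem: cyclicity tensor product two fund} (case $l=0$) supplies it.

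For the second family, I intend to use the hypothesis that $S$ is of lowest $\ell$-weight by proving the symmetric version of Lemma~\ref{lem: cyclicity tensor product two fund}: if $\bigotimes_{i=1}^n V_{r_i,a_i}^+$ is of lowest $\ell$-weight then $\frac{a_i}{a_j}\notin\Sigma(r_j,r_i)$ for all $i<j$. The reduction machinery of \S\ref{sec: cyclicity} is already symmetric in the two roles: Lemmas~\ref{lem: crucial even fundamental} and~\ref{lem: reduction lowest gl(1,1)} are formulated directly in terms of the lowest $\ell$-weight vectors $v^3,w^3$, and the embeddings $g_1,g_2,g_3$ respect lowest-weight generation equally well. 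Consequently the same induction on $M+N$ and on the number of tensor factors that proves Theorem~\ref{thm: main result} and Lemma~\ref{lem: cyclicity tensor product two fund} should yield the desired analogue, now with the tensor $\bigotimes v_i^3$ of lowest $\ell$-weight vectors playing the role of $\bigotimes v_i^1$ throughout the cascade on page~\pageref{page: proof theorem}. The main obstacle will be organising this parallel induction carefully so that the two-factor base case is handled correctly: there one verifies the lowest-weight analogue by combining the iso of Corollary~\ref{cor: simple fund modules two} (which already treats both orderings) with the explicit $R$-matrix computations of Theorem~\ref{thm: even even poles}.

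Once this analogue is in place, both families of inequalities are satisfied, every pairwise product $V_{r_i,a_i}^+\otimes V_{r_j,a_j}^+$ is simple, and Theorem~\ref{thm: simplicity of tensor products} then concludes that $S$ itself is simple.
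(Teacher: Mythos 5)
Your proposal is correct and follows essentially the same route as the paper's own two-line proof: reduce to pairwise simplicity via Theorem~\ref{thm: simplicity of tensor products}, extract the denominator condition $N_{ij}\neq 0$ from highest-$\ell$-weight via Lemma~\ref{lem: cyclicity tensor product two fund}, and extract $D_{ij}\neq 0$ from lowest-$\ell$-weight via the $D$-denominator of Theorem~\ref{thm: even even poles} together with the analogous induction. One small inaccuracy worth flagging: your claim that the reduction machinery of \S\ref{sec: cyclicity} is already symmetric is overstated, since Lemmas~\ref{lem: reduction highest M}--\ref{lem: reduction Weyl gl(1,1)} and Corollary~\ref{cor: conditions D E} are stated with the \emph{highest} $\ell$-weight vectors $v^1_i,w^1_j$ and would have to be re-derived (or replaced by a twisted-dual argument using Lemma~\ref{lem: twisted dual of fundamental modules}) for the lowest-weight cascade; you do acknowledge that this bookkeeping needs care, and the paper is equally terse on the point, so this is not a substantive gap.
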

\begin{proof}
The \lq\lq only if\rq\rq\ part is trivial by definition. The \lq\lq if\rq\rq\ part is a direct consequence of Theorem \ref{thm: even even poles}, Lemma \ref{lem: cyclicity tensor product two fund} and the above theorem.
\end{proof}
The above corollary remains true for tensor products of negative fundamental modules, by using the pull back $f^{*}$ in the proof of Lemma \ref{lem: negative fundamental module}. In \cite[\S 5]{Z2}, the above corollary was proved for all finite-dimensional simple modules over a Borel subalgebra of $U_q(\widehat{\mathfrak{gl}(1,1)})$ (and so over the full quantum affine superalgebra), the so-called $q$-Yangian.
\begin{example}  \label{example: lowest weight even odd}
Corollary \ref{cor: cyclicity simplicity of tensor products of even fundamental modules} fails if \lq\lq positive\rq\rq\ is removed. Consider $S := \BV(a) \otimes \BW(b) = V_{1,aq^2}^+ \otimes V_{1,bq^{-2}}^-$.  We have $v^1_1 = v^2_1 = v_1$ and $w^1_2 = w^2_2 = w_{\kappa}$. Set $W = g_1(U) v_1^1$ and $W' = g_1(U) w_2^1$. By Lemmas \ref{lem: crucial even fundamental}, \ref{lem: reduction lowest gl(1,1)} and \ref{lem: reduction Weyl gl(1,1)}, as $U$-modules
\begin{displaymath}
g_1^{\bullet}(W \otimes W') \cong g_1^{\bullet} W \otimes g_1^{\bullet} W' \simeq V(\frac{q-zaq^{-1}}{1-za}) \otimes V(\frac{1-zb}{q-zbq^{-1}}).
\end{displaymath}
It follows that $S$ is of highest $\ell$-weight and of lowest $\ell$-weight if $a \neq b$. On the other hand, $S$ is simple if and only if $a \neq b$ and $b \neq a q^{-2M+2N}$. 
\end{example}

\begin{example}
Let $1\leq s \leq M$ and $1 \leq t \leq N$ be such that $s-t = M-N$. The tensor product $V_{s,a}^+ \otimes V_{t,b}^-$ is simple if and only if it is of highest $\ell$-weight.
\end{example}

\section{Final remarks}   \label{sec: discussion}
In this final section, we make remarks which are not used in the proof of main results.

We use the convention in Remark \ref{rem: simplicity condition explicit}: associated to an index $1\leq i \leq s$ is a couple $(r_i,\varepsilon_i)$ where either $(\varepsilon_i = +, 1\leq r_i \leq M)$ or $(\varepsilon_i=-,1\leq r_i \leq N)$. Consider the tensor product $S := \otimes_{i=1}^s V_{r_i,a_i}^{\varepsilon_i}$. We want to know when $S$ is of highest $\ell$-weight.

Let us call $(\varepsilon_1\varepsilon_2\cdots\varepsilon_s)$ the signature of $S$. Theorem \ref{thm: main result} gives a criteria for $S$ to be of highest $\ell$-weight in signature $(++\cdots +--\cdots-)$. In the proof of Theorem \ref{thm: main result}, Corollary \ref{cor: conditions D E} is the crucial step to go from $s$ to $s-1$, whose proof relies on reductions: from $\Glie$ to $\mathfrak{gl}(M,N-1)$ in Lemma \ref{lem: reduction highest N}; from $\Glie$ to $\mathfrak{gl}(M-1,N)$ in Lemma \ref{lem: reduction highest M}; from $\Glie$ to $\mathfrak{gl}(1,1)$ in Lemmas \ref{lem: crucial even fundamental}, \ref{lem: reduction lowest gl(1,1)} and \ref{lem: reduction Weyl gl(1,1)}. Except Lemma \ref{lem: crucial even fundamental}, all the other lemmas hold regardless of the signature of $S$. For Lemma \ref{lem: crucial even fundamental}, we need the case $X_1$ negative and $X_3$ positive.

Let us define the quantum affine superalgebra $U_{q^{-1}}(\Gaff)$ in the same way as $U_q(\Gaff)$ except that $q$ is replaced by $q^{-1}$ everywhere; let $\widetilde{s}_{ij}^{(n)},\widetilde{t}_{ij}^{(n)}, \widetilde{s}_{ij}(z),\widetilde{t}_{ij}(z)$ denote its RTT generators.
\begin{lem}   \label{lem: involution}
There is an isomorphism of Hopf superalgebras $h: U_{q^{-1}}(\Gaff) \longrightarrow U_q(\Gaff)^{\mathrm{cop}}$ 
\begin{displaymath}
\sum_{i,j} \widetilde{s}_{ij}(z) \otimes E_{ij} \mapsto (\sum_{i,j} s_{ij}(z) \otimes E_{ij})^{-1}, \quad \sum_{i,j} \widetilde{t}_{ij}(z) \otimes E_{ij} \mapsto (\sum_{i,j} t_{ij}(z) \otimes E_{ij})^{-1}.
\end{displaymath}
\end{lem}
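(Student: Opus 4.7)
The plan is to construct $h$ via the antipode of $U_q(\Gaff)$. Indeed, \eqref{for: antipode} identifies the matrix entries of $S(z)^{-1}$ (resp.\ $T(z)^{-1}$) with $\Sm(s_{ij}(z))$ (resp.\ $\Sm(t_{ij}(z))$), so the prescribed formula amounts to $h(\widetilde s_{ij}^{(n)}) = \Sm(s_{ij}^{(n)})$ and $h(\widetilde t_{ij}^{(n)}) = \Sm(t_{ij}^{(n)})$. The task therefore splits into three pieces: (a) these images satisfy the defining $R^{q^{-1}}$-RTT relations of $U_{q^{-1}}(\Gaff)$, (b) the assignment intertwines $\Delta_{q^{-1}}$ with $\Delta_q^{\mathrm{cop}}$, and (c) it admits a two-sided inverse.

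For (a), I would start from the $R^q$-RTT relation (R3) of Definition \ref{def: quantum affine superalgebras}, e.g.\ $R^q_{23}(z,w) T_{12}(z) T_{13}(w) = T_{13}(w) T_{12}(z) R^q_{23}(z,w)$ in $U_q(\Gaff) \otimes \End\BV^{\otimes 2}$ with super tensor. Since $R^q_{23}$, $T_{12}(z)$, $T_{13}(w)$ are all even and invertible, taking super-matrix inverses of both sides yields the reversed relation $R^q_{23}(z,w) T^{-1}_{13}(w) T^{-1}_{12}(z) = T^{-1}_{12}(z) T^{-1}_{13}(w) R^q_{23}(z,w)$. To convert this into the standard $R^{q^{-1}}$-RTT for $T^{-1}$, I would exploit that $R^{q^{-1}}(z,w) R^{q}(z,w)$ acts block-diagonally on $\BV^{\otimes 2}$ with eigenvalues depending only on the parities $|i|,|j|$. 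The graded product $T^{-1}_{12}(z) T^{-1}_{13}(w)$ carries a super-sign $(-1)^{(|i|+|j|)(|k|+|l|)}$ which is absent from $T^{-1}_{13}(w) T^{-1}_{12}(z)$; these signs must combine with the parity-dependent eigenvalues above so that the reversed $R^q$-RTT becomes equivalent to the $R^{q^{-1}}$-RTT. The analogous arguments for the $SS$ and $ST$ relations follow in parallel.

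For (b), since $\Sm: U_q(\Gaff) \to U_q(\Gaff)^{\mathrm{op},\mathrm{cop}}$ is an isomorphism of Hopf superalgebras, the super Hopf axiom $\Delta \circ \Sm = \sigma \circ (\Sm \otimes \Sm) \circ \Delta$ (with $\sigma$ the graded swap) applied to \eqref{for: coproduct for quantum affine superalgebra S} gives $\Delta(\Sm(t_{ij}(z))) = \sum_k \Sm(t_{kj}(z)) \otimes \Sm(t_{ik}(z))$, since the parity signs $\epsilon_{ijk}$ squared equal $1$. Applying a further graded swap to pass to $\Delta^{\mathrm{cop}}$ produces $\sum_k \epsilon_{ijk} \Sm(t_{ik}(z)) \otimes \Sm(t_{kj}(z))$, which matches $(h \otimes h)(\Delta_{q^{-1}}(\widetilde t_{ij}(z)))$; the formula for $\Sm(s_{ij}(z))$ is obtained identically.

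For (c), running the same construction with $q$ replaced by $q^{-1}$ yields a map $h': U_q(\Gaff) \to U_{q^{-1}}(\Gaff)^{\mathrm{cop}}$, and $h \circ h'$ and $h' \circ h$ are checked to be the identity on RTT generators by the involutive nature of the antipode formula. The principal obstacle lies in (a): in the non-graded case $R^q(z,w) R^{q^{-1}}(z,w) \in \BC[z,w]\cdot \Id$ and the passage is immediate, while in the super setting this product is diagonal but non-scalar with two distinct eigenvalues depending on whether $|i|=|j|$, so the conversion requires a careful accounting of the graded signs carried by the super-matrix products $T^{-1}_{12}(z) T^{-1}_{13}(w)$ and $T^{-1}_{13}(w) T^{-1}_{12}(z)$.
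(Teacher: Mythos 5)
Your overall plan — define $h$ via the antipode so that $h(\widetilde s_{ij}^{(n)})=\Sm(s_{ij}^{(n)})$, $h(\widetilde t_{ij}^{(n)})=\Sm(t_{ij}^{(n)})$, then verify the RTT relations, the coproduct, and invertibility — is the right skeleton, and your parts (b) and (c) are essentially correct. However, part (a) hinges on a computational claim that is false, and this is a genuine gap.

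You assert that ``in the super setting this product $R^{q^{-1}}(z,w)R^q(z,w)$ is diagonal but non-scalar with two distinct eigenvalues depending on whether $|i|=|j|$.'' In fact
\begin{displaymath}
R_q(z,w)\,R_{q^{-1}}(z,w)\;=\;(zq-wq^{-1})(zq^{-1}-wq)\cdot 1\otimes 1 \,,
\end{displaymath}
i.e.\ the product \emph{is} a scalar multiple of the identity in the super setting, and this scalar identity (equivalently $R_q(z,w)^{-1} = \frac{1}{(zq-wq^{-1})(zq^{-1}-wq)} R_{q^{-1}}(z,w)$) is precisely the fact the paper invokes. Your conclusion likely stems from computing the product as an ungraded matrix product in $\End(\BV^{\otimes 2})$. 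Once the super-sign in the multiplication of $\End(\BV)\otimes\End(\BV)$ is taken into account, namely $(E_{ji}\otimes E_{ij})(E_{ij}\otimes E_{ji})=(-1)^{|i|+|j|}E_{jj}\otimes E_{ii}$, the off-block entries still cancel and the diagonal entry on the block $\{v_i\otimes v_j, v_j\otimes v_i\}$ becomes $(z-w)^2-(-1)^{|i|+|j|}zw(q_i-q_i^{-1})(q_j-q_j^{-1})$, which equals $(zq-wq^{-1})(zq^{-1}-wq)$ for \emph{both} parities (the extra sign $(-1)^{|i|+|j|}$ exactly compensates the sign flip of $(q_i-q_i^{-1})(q_j-q_j^{-1})$ when $|i|\neq|j|$).

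Because of this error, the remedy you sketch (``these signs must combine with the parity-dependent eigenvalues above so that the reversed $R^q$-RTT becomes equivalent to the $R^{q^{-1}}$-RTT'') is not an argument but an appeal to hope, and as stated it does not produce a proof. With the correct scalar identity, (a) becomes immediate: invert both sides of $R^q_{23}T_{12}T_{13}=T_{13}T_{12}R^q_{23}$, substitute $(R^q_{23})^{-1}=\frac{1}{(zq-wq^{-1})(zq^{-1}-wq)}R^{q^{-1}}_{23}$, cancel the scalar, and read off $R^{q^{-1}}_{23}T_{12}^{-1}T_{13}^{-1}=T_{13}^{-1}T_{12}^{-1}R^{q^{-1}}_{23}$; the $SS$ and $ST$ relations are identical. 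This is exactly the route the paper takes.
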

\begin{proof}
The idea is the same as that of \cite[Prop.3.4]{Z2}, based on the identity
\begin{displaymath}
R_q(z,w)^{-1} = \frac{1}{(zq-wq^{-1})(zq^{-1}-wq)} R_{q^{-1}}(z,w) \in \End(\BV^{\otimes 2})(z,w). 
\end{displaymath}
and on the definition of $U_q(\Gaff)$ in \cite[Definition 3.5]{Z2}.
\end{proof}
$h$ is inspired by the involution of quantum affine algebras in \cite[Appendix A]{AK}.

Let $\widetilde{V}_{r,a}^{\pm}$ be the corresponding fundamental modules over $U_{q^{-1}}(\Gaff)$. From the definition of twisted dual and from Lemma \ref{lem: twisted dual of fundamental modules}, we obtain
\begin{equation}    \label{equ: involution}
h^* V_{r,a}^+ \simeq \widetilde{V}_{r,aq^{-2M+2N-2r-2}}^+,\quad h^*V_{r,a}^- \simeq \widetilde{V}_{r,aq^{2r+2}}^-.
\end{equation}

Now Lemma \ref{lem: crucial even fundamental} and Corollary \ref{cor: conditions D E} can be generalized accordingly. For this purpose, let us define the $K_{ij}^l$ and $K_{ij}^r$ associated to the tensor product $S = \otimes_{i=1}^s V_{r_i,a_i}^{\varepsilon_i}$  as follows:
\begin{align}
& (K_{ij}^{l},K_{ij}^r) := \begin{cases}
(1,1) & \mathrm{if}\ \varepsilon_i = \varepsilon_j, \\
(a_i - a_j q^4, a_i - a_j q^{2r_i+2r_j}) & \mathrm{if}\ (\varepsilon_i,\varepsilon_j) = (+,-), \\
(a_i - a_j q^{-2M+2N-4},  a_i - a_j q^{-2M+2N-2r_i-2r_j}) & \mathrm{if}\ (\varepsilon_i,\varepsilon_j) = (-,+),
\end{cases}  \label{equ: K} \\
& f_i^l := \prod_{j<i} \Delta_{ij} \times \prod_{j\neq i} K_{ij}^l,\quad f_i^r := \prod_{j\neq i} K_{ji}^r \times \prod_{j>i} \Delta_{ji}. \label{equ: f}
\end{align} 
To unify notations in \S \ref{sec: cyclicity}, let $u_i^1$ and $u_i^3$ be highest and lowest $\ell$-weight vectors of $V_{r_i,a_i}^{\varepsilon_i}$ and $u_i^2 = s_{1\kappa}^{(0)}u^3_i$. From Theorem \ref{thm: BKK Schur-Weyl duality} and Lemmas \ref{lem: fundamental modules}--\ref{lem: negative fundamental module}, we see that $u_i^2 \doteq h(\widetilde{s}_{1\kappa}^{(0)}) u_i^3$. Corollary \ref{cor: conditions D E} together with its proof is now generalized as follows.
\begin{cor}
Assume that $\Delta_{ij} \neq 0$ for all $1 \leq i < j \leq s$. If $f_i^l \neq 0$, then $S \cong V_{r_i,a_i}^{\varepsilon_i} \otimes (\otimes_{j\neq i} V_{r_j,a_j}^{\varepsilon_j}) =: S_i^l$ as $U_q(\Gaff)$-modules and $u_i^3 \otimes (\otimes_{j\neq i} u_j^1) \in U_q(\Gaff) (u_i^2 \otimes (\otimes_{j\neq i} u_j^1)) \subseteq S_i^l$. Similarly, if $f_i^r \neq 0$, then $S \cong (\otimes_{j\neq i} V_{r_j,a_j}^{\varepsilon_j}) \otimes V_{r_i,a_i}^{\varepsilon_i} =: S_i^r$ as $U_q(\Gaff)$-modules and $(\otimes_{j\neq i}u_j^1) \otimes u_i^3 \in U_q(\Gaff)((\otimes_{j\neq i}u_j^1) \otimes u_i^2) \subseteq S_i^r$.
\end{cor}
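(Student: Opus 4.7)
My plan is to follow the template of Corollary~\ref{cor: conditions D E}, adapting each step to arbitrary signatures, with the new ingredient being the involution $h$ of Lemma~\ref{lem: involution}. I will treat only the $l$-half; the $r$-half is analogous after reversing left and right and replacing $f_i^l$ by $f_i^r$.

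To produce the isomorphism $S\cong S_i^l$, I will slide $V_{r_i,a_i}^{\varepsilon_i}$ from position $i$ to position $1$ by successive adjacent swaps. For each $j<i$, the swap $V_{r_j,a_j}^{\varepsilon_j}\otimes V_{r_i,a_i}^{\varepsilon_i}\cong V_{r_i,a_i}^{\varepsilon_i}\otimes V_{r_j,a_j}^{\varepsilon_j}$ exists once that pair is simple, which by Remark~\ref{rem: simplicity condition explicit} applied to a two-factor tensor product reduces to $\Delta_{ij}\neq 0$ and $\Delta_{ji}\neq 0$: the former is supplied by the factor $\prod_{j<i}\Delta_{ij}$ of $f_i^l$, the latter by the main hypothesis (applicable since $j<i$). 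Composing the $i-1$ swaps delivers $S\cong S_i^l$; crucially, no $\Delta$-condition is used between two factors both distinct from $V_{r_i,a_i}^{\varepsilon_i}$.

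For the generation, I work inside $S_i^l$ and prove the sharper claim $u_i^3\otimes(\otimes_{j\neq i}u_j^1)\in g_1(U)\bigl(u_i^2\otimes(\otimes_{j\neq i}u_j^1)\bigr)$, where $g_1\colon U=U_q(\widehat{\mathfrak{gl}(1,1)})\to U_q(\Gaff)$ is the embedding of~\S\ref{sec: cyclicity}. When $\varepsilon_i=+$, Lemma~\ref{lem: crucial even fundamental} applies directly with $X_1=V_{r_i,a_i}^+$ in the first slot, $X_2=\otimes_{j\neq i}V_{r_j,a_j}^{\varepsilon_j}$, and trivial $X_3$: it yields a natural $U$-module identification $g_1^{\bullet}(Y_1\otimes Y_2)\cong g_1^{\bullet}Y_1\otimes g_1^{\bullet}Y_2$, where $Y_1=g_1(U)u_i^3$ and $Y_2=g_1(U)(\otimes_{j\neq i}u_j^1)$. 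Lemma~\ref{lem: reduction lowest gl(1,1)} identifies $g_1^{\bullet}Y_1$ as a two-dimensional simple $U$-module with highest $\ell$-weight vector $u_i^2$ and lowest $\ell$-weight vector $u_i^3$; Lemma~\ref{lem: reduction Weyl gl(1,1)} exhibits $g_1^{\bullet}Y_2$ as a quotient of an explicit Weyl module whose defining polynomial data is read off factor by factor from the $V_{r_j,a_j}^{\varepsilon_j}$. A direct matching of polynomials shows that the co-primality required by Proposition~\ref{prop: simple vs Weyl gl(1,1)} becomes the non-vanishing of $K_{ij}^l$ for $j\neq i$ (from Equation~\eqref{equ: K}) together with one factor of $\Delta_{ij}$ per $j\neq i$, and both are covered: the $K_{ij}^l$ are part of $f_i^l\neq 0$, while the $\Delta_{ij}$ come from $f_i^l$ (for $j<i$) and from the main hypothesis (for $j>i$). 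Consequently $g_1^{\bullet}Y_1\otimes g_1^{\bullet}Y_2$ is of highest $\ell$-weight, its generator $u_i^2\otimes(\otimes_{j\neq i}u_j^1)$ reaches every vector, and the claim follows.

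When $\varepsilon_i=-$, the weight argument of Lemma~\ref{lem: crucial even fundamental} for $X_1$ breaks down because a negative fundamental module does admit weights of the form $\mu+\epsilon_i-\epsilon_j$ with $i\in\{1,\kappa\},\ j\notin\{1,\kappa\}$. The repair is the involution $h$: since $h$ is a Hopf isomorphism landing in $U_q(\Gaff)^{\mathrm{cop}}$, the pullback $h^*S_i^l$ is the \emph{reversed} tensor product as a $U_{q^{-1}}(\Gaff)$-module, so the negative factor $\widetilde V_{r_i,a_iq^{2r_i+2}}^{-}$ (by Equation~\eqref{equ: involution}) now occupies the \emph{last} slot, which is the $X_3$-negative case of Lemma~\ref{lem: crucial even fundamental} and does apply. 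Running the argument of the preceding paragraph in $U_{q^{-1}}(\Gaff)$ and then pulling back through $h$ yields the required generation in $U_q(\Gaff)$. The principal technical obstacle of the proof lies exactly here: one must verify that the spectral-parameter shifts prescribed by Equation~\eqref{equ: involution} translate the $U_{q^{-1}}$-side co-primalities back into precisely the factors $K_{ij}^l$ of Equation~\eqref{equ: K}; the asymmetric exponents between the $(+,-)$ and $(-,+)$ rows of that definition are exactly what makes this translation consistent.
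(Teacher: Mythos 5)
Your argument is correct and follows the same route the paper has in mind: move the $i$-th factor to the end by pairwise swaps (whose existence and invertibility are guaranteed by $\Delta_{ij},\Delta_{ji}\neq 0$), then rerun the Weyl-module reduction of Lemma~\ref{lem: reduction Weyl gl(1,1)} and Proposition~\ref{prop: simple vs Weyl gl(1,1)} through $g_1$, using the involution $h$ of Lemma~\ref{lem: involution} and Equation~\eqref{equ: involution} to replace the ``$X_1$ negative / $X_3$ positive'' case of Lemma~\ref{lem: crucial even fundamental} that is not directly available. The only small imprecision is that for $\varepsilon_i=-$ the $h$-pullback gives $U_q(\Gaff)$-generation rather than the ``sharper'' $g_1(U)$-generation you announce, but since the corollary only asserts the former, this does not affect the conclusion.
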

In the corollary, $\otimes_{j\neq i}$ means the ordered tensor product $(\otimes_{j=1}^{i-1}) \otimes (\otimes_{j=i+1}^s)$. We arrive at the following problem of linear algebra. 

\begin{question}   \label{question: linear algebra}
For $1 \leq i \leq s$, let $(r_i,\varepsilon_i)$ be as above and let $a_i \in \BC^{\times}$. Define $\Delta_{ij}, f_i^l,f_i^r$ by Equations \eqref{equ: Delta}, \eqref{equ: K} and \eqref{equ: f}. Suppose that $f_i^l = f_i^r = 0$ for all $1 \leq i \leq s$. Then is it necessarily true that $\prod_{i<j} \Delta_{ij} = 0$?
\end{question}
\begin{rem}   \label{rem: idea of proof in general case}
Suppose that the answer to the above question is positive. We can argue as in the proof of Corollary \ref{cor: tensor product even fundamental} to conclude that $S = \otimes_{i=1}^s V_{r_i,a_i}^{\varepsilon_i}$ is of highest $\ell$-weight if $\prod_{i< j} \Delta_{ij} \neq 0$. 
\end{rem}
The proof of Theorem \ref{thm: main result} together with Lemma \ref{lem: involution} and Equation \eqref{equ: involution} actually affirms the cases $(\varepsilon_1\cdots\varepsilon_s) = (++\cdots + - - \cdots -)$ and $(--\cdots - ++ \cdots +)$. So Theorem \ref{thm: main result} remains true when the tensor product is of signature $(--\cdots - ++ \cdots +)$.
\begin{example}   \label{example: case s = 3}
Let $s = 3$. The answer to Question \ref{question: linear algebra} is affirmative. Indeed the only essential difficulty appears when $(\varepsilon_1\varepsilon_2\varepsilon_3) = (+-+)$ and $\prod_{i<j} \Delta_{ij} \neq 0 = \Delta_{21} = \Delta_{32}$. (If $\Delta_{21} \neq 0$ then one can exchange $\varepsilon_1$ and $\varepsilon_2$ to arrive at the known signature $(-++)$; similar arguments for $\Delta_{32}$.) Suppose $f_i^l = f_i^r= 0$. By definition, $f_1^l = K_{12}^l = 0$ and $f_3^r = K_{23}^r = 0$. From
\begin{displaymath}
K_{12}^l = 0 = \Delta_{21} = a_1 - a_2 q^4 = a_2 - a_1 q^{-2M+2N-2r_2-2}
\end{displaymath}
we get $2 = 2M-2N+2r_2$. Next from $K_{23}^r = a_2 - a_3 q^{-2M+2N-2r_2-2r_3} = 0$ we get $a_1 = a_3 q^{2-2r_3}$. But $\Delta_{13} \neq 0$, we have $\min(r_1,r_3) = 1$. Since $\Delta_{12} = a_1 - a_2 q^{2r_1+2} \neq 0$ and $a_1 = a_2 q^4$, $r_1 > 1$. Since $\Delta_{23} = a_2 - a_3q^{-2M+2N-2r_2-2} \neq 0$ and $a_2 = a_3 q^{-2M+2N-2r_2-2r_3}$, $r_3 > 1$. It follows that $\min(r_1,r_3) > 1$, a contradiction. As a consequence, $V_{r_1,a_1}^{\varepsilon_1} \otimes V_{r_2,a_2}^{\varepsilon_2} \otimes V_{r_3,a_3}^{\varepsilon_3}$ is of highest $\ell$-weight if $\prod_{i<j}\Delta_{ij} \neq 0$.
\end{example} 
\begin{example}   \label{example: case 1}
Assume that $r_i = 1$ for all $1 \leq i \leq s$. Since $f_1^l = 0 = \prod_{i=2}^s K_{1i}^l = 0$, there exists $1<i\leq s$ such that $K_{1i}^l = 0$. It follows that $\varepsilon_1 \neq \varepsilon_i$ and $\Delta_{1i} = K_{1i}^l = 0$. As a consequence, the tensor product $\otimes_{i=1}^s V_{1,a_i}^{\varepsilon_i}$ is of highest $\ell$-weight if $\prod_{i<j}\Delta_{ij} \neq 0$.
\end{example}

\end{document}